\documentclass[11pt,a4paper, reqno]{amsart}
\usepackage[dvipsnames]{xcolor}
\usepackage{amsmath,amssymb,amsfonts,amsthm,mathtools,mathrsfs,bm}
\usepackage[shortlabels]{enumitem}
\usepackage{tikz}
\usepackage{cite}
\usepackage[pagebackref=true,colorlinks=true,breaklinks=true]{hyperref}
\hypersetup{
  linkcolor=MidnightBlue,
  citecolor=MidnightBlue,
  urlcolor=MidnightBlue,
  linktoc=all,
  pdftitle={Sharp convex body sparse bounds and quantitative matrix weighted estimates for Hilbert transforms along curves},
  pdfauthor={Xi Cen}
}
\usepackage{microtype}
\usepackage{needspace}
\renewcommand*{\backref}[1]{}

\renewcommand*{\backrefalt}[4]{%
  \ifcase #1\relax
  \or\space{\footnotesize #2}
  \else\space{\footnotesize #2}
  \fi}

\allowdisplaybreaks[3]

\newtheorem{alphatheorem}{Theorem}

\newtheorem{theorem}{Theorem}[section]
\newtheorem{proposition}[theorem]{Proposition}
\newtheorem{lemma}[theorem]{Lemma}
\newtheorem{corollary}[theorem]{Corollary}

\newtheorem{remark}[theorem]{Remark}

\newtheorem{question}{Question}
\newtheorem{introdefinition}{Definition}[section]

\numberwithin{equation}{section}

\def\R{{\mathbb R}}
\def\C{{\mathbb C}}
\def\Z{{\mathbb Z}}
\def\1{{\mathbf 1}}
\newcommand{\Sc}{\mathcal S}
\newcommand{\Qc}{\mathcal Q}
\newcommand{\Kc}{\mathcal{D}_{\gamma}}

\newcommand{\Oc}{\Omega}
\newcommand{\CB}{\mathrm{cb}}
\newcommand{\pv}{\operatorname{p.v.}}
\newcommand{\supp}{\operatorname{supp}}

\newcommand{\op}{\operatorname{op}}
\newcommand{\inte}{\operatorname{int}}

\providecommand{\eqsim}{\mathrel{\simeq}}
\newcommand{\llangle}{\left\langle\!\left\langle}
\newcommand{\rrangle}{\right\rangle\!\right\rangle}

\newcommand{\Hgamloc}{H_{\gamma,\mathrm{loc}}}
\newcommand{\Hvec}{\overrightarrow{H_{\gamma}}}
\newcommand{\Hvecloc}{\overrightarrow{H_{\gamma,\mathrm{loc}}}}
\newcommand{\vecT}{\overrightarrow T}
\newcommand{\vecf}{\mathbf f}
\newcommand{\vecg}{\mathbf g}
\newcommand{\vecF}{\mathbf F}
\newcommand{\vecG}{\mathbf G}
\newcommand{\vecGz}{\mathbf G_0}
\newcommand{\vech}{\mathbf h}

\DeclareRobustCommand{\Thmref}[1]{\mbox{\hyperref[#1]{Theorem~\ref*{#1}}}}
\DeclareRobustCommand{\Propref}[1]{\mbox{\hyperref[#1]{Proposition~\ref*{#1}}}}
\DeclareRobustCommand{\Lemref}[1]{\mbox{\hyperref[#1]{Lemma~\ref*{#1}}}}
\DeclareRobustCommand{\Corref}[1]{\mbox{\hyperref[#1]{Corollary~\ref*{#1}}}}
\DeclareRobustCommand{\Remref}[1]{\mbox{\hyperref[#1]{Remark~\ref*{#1}}}}
\DeclareRobustCommand{\Defref}[1]{\mbox{\hyperref[#1]{Definition~\ref*{#1}}}}
\DeclareRobustCommand{\Secref}[1]{\mbox{\hyperref[#1]{Section~\ref*{#1}}}}
\DeclareRobustCommand{\Figref}[1]{\mbox{\hyperref[#1]{Figure~\ref*{#1}}}}
\DeclareRobustCommand{\Extref}[2]{\mbox{\hyperlink{cite.#1}{#2}}}
\begin{document}

\title[Sharp convex body sparse bounds...]{\bf Sharp convex body sparse bounds and quantitative matrix weighted estimates for Hilbert transforms along curves}

\author[X. Cen]{Xi Cen}
\address{Xi Cen, School of Science, China University of Mining and Technology-Beijing, Beijing 100083, People's Republic of China}
\email{xicenmath@gmail.com}

\begin{abstract}
We prove convex body sparse bounds for coordinatewise Hilbert transforms along monomial curves and local $C^d$ curves with uniformly nonvanishing torsion on the closed integration interval. Adapting the scalar estimates of Cladek--Ou \cite{CladekOu2018} and the lifting method of Laukkarinen--Lorist \cite{LaukkarinenLorist2026} to the anisotropic geometry, we obtain one sparse family of genuine curve-adapted boxes and quantitative $L^p(W)$ bounds for matrix weights. The convex body sparse exponent range is sharp up to its boundary: estimates hold in the interior of $\Omega_d$, while any fixed finite number of sparse families fails outside $\overline{\Omega_d}$ when $1/r+1/s>1$. We also establish an explicit endpoint-contact obstruction. For every interior planar exponent pair, a polynomial curve with positive torsion on the open integration interval violates the full-interval sparse estimate.
\end{abstract}

\maketitle

\begin{center}
\begin{minipage}{120mm}
{\small {\bf Key Words: }Hilbert transforms along curves, convex body sparse domination, matrix weights}\\
{\small {\bf AMS Classification: }42B20, 42B25, 42B35, 47A56.}
\end{minipage}
\end{center}

\medskip

\section{Introduction}\label{sec:introduction}

\subsection{Background and motivations}\label{subsec:background}
\vspace{0.1em}\noindent
We establish convex body sparse bounds and quantitative matrix weighted estimates for Hilbert transforms along curves. The scalar model is
\begin{align}\label{eq:Hgamma-global}
        H_\gamma f(x)\coloneqq \pv\int_\R f(x-\gamma(t))\,\frac{dt}{t},
        \qquad x\in\R^d.
\end{align}
For local curves we restrict the integral to a compact interval. For $\vecf=(f_1,\ldots,f_n):\R^d\to\C^n$, the coordinatewise extension is $\vecT\vecf=(Tf_1,\ldots,Tf_n)$. In particular,
\begin{align}\label{eq:vector-extension-H}
\Hvec\vecf=(H_\gamma f_1,\ldots,H_\gamma f_n).
\end{align}
Here $d$ is the spatial dimension and $n$ is the vector dimension.

Curve Hilbert transforms exhibit cancellation, anisotropic scaling and $L^p$ improvement \cite{SteinWainger1978,ChristStein1987,Christ1988,Christ1998Case,TaoWright2003}. Sparse domination and related weighted estimates appear in \cite{Hytonen2012,Lerner2016,BernicotFreyPetermichl2016,CondeAlonsoCuliucDiPlinioOu2017,CuliucDiPlinioOu2018,LaceySpherical2017,Oberlin2019,LiWangZhai2023}. Cladek--Ou \cite{CladekOu2018} determined the scalar sparse exponent region for monomial and local torsion curves. We distinguish torsion nonvanishing in the open interval from uniform nondegeneracy at its endpoints.

Scalar sparse averages lose vector directions needed for matrix weights. Convex body averages retain them \cite{NazarovPetermichlTreilVolberg2017,PottStoica2017,CuliucDiPlinioOu2018b,Laukkarinen2025,Nieraeth2025,KakaroumpasNguyenVardakis2026}. We apply this framework to curve-adapted sparse forms.

The abstract tools are the multi-scale theory of \cite{Laukkarinen2025}, the scalar-to-convex-body lifting of \cite{LaukkarinenLorist2026}, and the matrix sparse embedding of \cite{KakaroumpasNguyenVardakis2026}. The new geometric steps are the anisotropic reduction, the nesting $Q^+\subseteq P^+$ for $Q\subseteq P$ in \eqref{eq:adapted-plus-nesting}, the Carleson transfer \eqref{eq:adapted-carleson-transfer}, and the full ball--box comparison \eqref{eq:matrix-ball-box-equivalence-r31}. These permit a single family of genuine curve-adapted boxes and a matrix characteristic on all translated adapted boxes. The abstract lifting and matrix embedding are used as cited, rather than reproved.

Monomial $\gamma$-cubes have side-lengths $\ell^{\alpha_j}$ and unbounded eccentricity as $\ell\to0$. Our construction respects this geometry. A separate endpoint-contact test gives an obstruction despite torsion nonvanishing at every interior parameter.

Related matrix weighted settings are studied in \cite{KakaroumpasNieraeth2026,BenitoEtAl2026,ClaroEtAl2026}.

We now fix the geometry used in the main statements. Let $d\ge2$. A monomial curve is a curve $\gamma:\R\to\R^d$ of the form
\begin{align}\label{eq:monomial-curve}
\gamma(t)=
\begin{cases}
(\varepsilon_1 |t|^{\alpha_1},\ldots,\varepsilon_d |t|^{\alpha_d}),& t\ge0,\\
(\varepsilon'_1 |t|^{\alpha_1},\ldots,\varepsilon'_d |t|^{\alpha_d}),& t<0,
\end{cases}
\end{align}
where $0<\alpha_1<\cdots<\alpha_d<\infty$, $\varepsilon_j,\varepsilon'_j\in\{1,-1\}$, and $\varepsilon_j\ne\varepsilon'_j$ for at least one $j$. In proofs involving dyadic generations we normalize $\alpha_1=1$. Indeed, the change $u=\operatorname{sgn}(t)|t|^{\alpha_1}$ gives $dt/t=du/(\alpha_1u)$ and transforms the powers into $\alpha_j/\alpha_1$. The transformed Hilbert integral carries the factor $\alpha_1^{-1}$, which is absorbed into constants depending on $\gamma$. Since $\ell$ runs over all positive scales, the collection of genuine adapted rectangles is unchanged. The normalization ensures that the first dyadic coordinate changes at every generation. The associated dilation is
\begin{align}\label{eq:monomial-dilation}
        \delta^\gamma_\lambda(x_1,\ldots,x_d)
        =(\lambda^{\alpha_1}x_1,\ldots,\lambda^{\alpha_d}x_d).
\end{align}
\begin{introdefinition}\label{def:gamma-cubes}
For a monomial curve $\gamma$ as in \eqref{eq:monomial-curve}, a $\gamma$-cube is a translate of $\delta_\ell^\gamma([0,1]^d)$ for some $\ell>0$. Equivalently, its side-lengths are $(\ell^{\alpha_1},\ldots,\ell^{\alpha_d})$ in the coordinate frame of the monomial curve. We write $\ell(Q)=\ell$ and denote by $\Qc_\gamma$ the corresponding collection of curve-adapted cubes.
\end{introdefinition}

\begin{remark}
Following \cite[Definition~1.2 and Remark~1.4]{CladekOu2018}, a dyadic approximation of the $\gamma$-cubes at scale $2^k$ has coordinate side-lengths $2^{\lfloor k\alpha_j\rfloor}$. The bounds
\begin{align*}
2^{-1}2^{k\alpha_j}<2^{\lfloor k\alpha_j\rfloor}\le 2^{k\alpha_j}
\end{align*}
show that these boxes are uniformly comparable with genuine $\gamma$-cubes. For nonintegral exponents, dyadic approximation boxes need not be genuine $\gamma$-cubes. A fixed $1/3$-shift may fail nesting, so \Lemref{lem:corrected-grid} uses alternating shifts with nested grids whose central thirds partition space.
\end{remark}

For the local transform we take a $C^d$ curve $\gamma:[-1,1]\to\R^d$ with $\gamma(0)=0$ and assume the uniform torsion condition
\begin{align}\label{eq:torsion}
\inf_{-1\le t\le1}\left|\det\bigl(\gamma'(t),\gamma''(t),\ldots,\gamma^{(d)}(t)\bigr)\right|>0.
\end{align}
We use the local Hilbert transform
\begin{align}\label{eq:local-H}
        \Hgamloc f(x)\coloneqq \pv\int_{-1}^{1}f(x-\gamma(t))\,\frac{dt}{t} .
\end{align}
Its coordinatewise tensor extension is denoted by
\begin{align}\label{eq:vector-extension-Hloc}
        \Hvecloc\vecf=(H_{\gamma,\mathrm{loc}}f_1,\ldots,H_{\gamma,\mathrm{loc}}f_n).
\end{align}
The determinant in \eqref{eq:torsion} has a constant sign, including negative orientation. For the full-interval theorem, the closed-interval bound is essential by \Propref{prop:endpoint-flat}.
Since the Taylor vectors $\gamma'(0),\ldots,\gamma^{(d)}(0)$ form a basis, we apply Gram--Schmidt in that order to obtain an orthonormal frame $e_1,\ldots,e_d$, exactly as in \cite[Definition~1.5]{CladekOu2018}. We use the resulting fixed orthogonal coordinate change. Since $e_j$ is orthogonal to $\gamma^{(k)}(0)$ for $k<j$, the $j$th coordinate of $\gamma(t)$ satisfies $|\langle\gamma(t),e_j\rangle|\le C_\gamma|t|^j$ for $|t|\le1$. The curve-adapted cubes are translates of anisotropic dilates of $[0,1]^d$ in this orthonormal frame, with side-lengths $(\ell,\ell^2,\ldots,\ell^d)$. We use the notation $\Qc_\gamma$ for this local cube basis.

We write $\Kc=\mathscr D_\star$ for the corrected dyadic grid of \Lemref{lem:corrected-grid}, constructed from \cite[p.~422]{CladekOu2018}, and $\Qc_\gamma$ for all translated adapted boxes. The lifting uses $\Kc$, while the matrix characteristic uses $\Qc_\gamma$ to cover every metric ball required by \Extref{KakaroumpasNguyenVardakis2026}{Theorem~1.3} \cite{KakaroumpasNguyenVardakis2026}.

A family $\Sc$ of measurable curve-adapted boxes is $\delta$-sparse if there exist measurable sets $E_Q\subseteq Q$, $Q\in\Sc$, such that
\begin{align}\label{eq:sparse-definition-intro}
        |E_Q|\ge\delta |Q|,
        \qquad E_Q\cap E_{Q'}=\varnothing\quad(Q\ne Q').
\end{align}
For $1\le r<\infty$, set
\begin{align}\label{eq:avg}
        \langle h\rangle_{r,Q}
        \coloneqq \left(\frac1{|Q|}\int_Q |h(x)|^r\,dx\right)^{1/r},
\end{align}
with the usual essential supremum when $r=\infty$. For $1\le r,s\le\infty$, the scalar sparse form is
\begin{align}\label{eq:scalar-sparse}
        \Lambda_{\Sc,r,s}(f,g)
        \coloneqq \sum_{Q\in\Sc}|Q|\langle f\rangle_{r,Q}\langle g\rangle_{s,Q}.
\end{align}

The convex body sparse form used in the main results is defined as follows. It is stated for $\C^n$-valued functions, since this is the value space needed for matrix weights.

\begin{introdefinition}\label{def:convex-average-intro}
For either a genuine $\gamma$-cube or a dyadic approximation box $Q$, and for $\vecf\in L^r(Q,\C^n)$, $1\le r\le\infty$, define
\begin{align}\label{eq:convex-average}
        \llangle \vecf\rrangle_{r,Q}
        \coloneqq \overline{\left\{\frac1{|Q|}\int_Q \vecf(x)\varphi(x)\,dx:
        \langle \varphi\rangle_{r',Q}\le1\right\}}.
\end{align}
The closure guarantees compactness also when $r=\infty$, since the admissible scalar functions then satisfy $|Q|^{-1}\int_Q|\varphi|\le1$. The support function of this convex body is $h_{\llangle\vecf\rrangle_{r,Q}}(e)=\langle|\langle\vecf,e\rangle_{\C^n}|\rangle_{r,Q}$ for every $1\le r\le\infty$.
If $K,L\subseteq\C^n$ are compact symmetric convex sets, write
\begin{align}\label{eq:circ}
        K\circ L\coloneqq \sup_{v\in K,\,w\in L}|\langle v,w\rangle_{\C^n}|.
\end{align}
For a sparse family $\Sc$ of curve-adapted boxes, including the dyadic approximations in $\Kc$, define the bilinear convex body sparse form
\begin{align}\label{eq:convex-form}
        \Lambda^{\CB}_{\Sc,r,s}(\vecf,\vecg)
        \coloneqq \sum_{Q\in\Sc}|Q|\,\llangle \vecf\rrangle_{r,Q}\circ \llangle \vecg\rrangle_{s,Q}.
\end{align}
\end{introdefinition}

Let $\Oc_d$ be the closed polygon in the $(x,y)$-plane with vertices
\begin{align}\label{eq:trapezoid}
        (0,0),\quad (1,1),\quad
        \left(\frac{d^2-d+2}{d^2+d},\frac{d-1}{d+1}\right),\quad
        \left(\frac{2}{d+1},\frac{2d-2}{d^2+d}\right).
\end{align}
We denote by $\inte(\Oc_d)$ the interior of this polygon and by $\overline{\Oc_d}$ its closure. The main sparse estimates are stated for points in $\inte(\Oc_d)$, and the sharpness statement is formulated outside $\overline{\Oc_d}$.  \Figref{fig:Omega-d} records the geometry of the region.
\begin{figure}[ht]
\centering
\begin{tikzpicture}[scale=4.0]
\draw[->] (-0.05,0) -- (1.12,0) node[right] {$1/r$};
\draw[->] (0,-0.05) -- (0,1.12) node[above] {$1/s'$};
\coordinate (O) at (0,0);
\coordinate (I) at (1,1);
\coordinate (P) at (0.666667,0.5);
\coordinate (Qv) at (0.5,0.333333);
\fill[blue!12] (O)--(Qv)--(P)--(I)--cycle;
\draw[blue!65!black, thick] (O)--(Qv)--(P)--(I)--cycle;
\draw[dashed] (0,0)--(1,1);
\node[below left] at (O) {$(0,0)$};
\node[above right] at (I) {$(1,1)$};
\node[right] at (P) {$P_3$};
\node[left] at (Qv) {$Q_3$};
\node at (0.59,0.61) {$\Omega_3$};
\end{tikzpicture}
\caption{The sparse region illustrated at $d=3$. The vertices are $P_3=(2/3,1/2)$ and $Q_3=(1/2,1/3)$. The general $d$-dependent coordinates are given in \eqref{eq:trapezoid}. At $d=2$ the two nontrivial vertices coincide.}
\label{fig:Omega-d}
\end{figure}
Cladek--Ou \cite{CladekOu2018} proved the scalar sparse theorem for monomial curves by decomposing $H_\gamma$ into single-scale pieces, proving translation-continuity $L^p$-improving estimates for those pieces, and iterating a stopping-time estimate over dyadic $\gamma$-cubes. The theorem below is the scalar line which we shall lift to a convex body form.

\begin{alphatheorem}[{\Extref{CladekOu2018}{Theorem~1} of \cite{CladekOu2018}}]\label{thm:A}
Let $\gamma$ be a monomial curve in $\R^d$, $d\ge2$. Suppose that $(1/r,1/s')\in\inte(\Oc_d)$. If $f,g$ are compactly supported bounded scalar functions on $\R^d$, then there exists a $\delta_\gamma$-sparse collection $\Sc\subseteq\Qc_\gamma$ such that
\begin{align}\label{eq:theoremA}
        |\langle H_\gamma f,g\rangle|
        \le C_{d,\gamma,r,s}\Lambda_{\Sc,r,s}(f,g).
\end{align}
\end{alphatheorem}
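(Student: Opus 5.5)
We would follow the Cladek--Ou scheme. First normalize $\alpha_1=1$ as described above. Then decompose the kernel dyadically: choose a smooth $\psi$ supported in $\{1/2\le|t|\le2\}$ with $\sum_{j\in\Z}\psi(2^{-j}t)=1$ for $t\neq0$. This gives
\begin{align*}
H_\gamma=\sum_{j\in\Z}T_j,\qquad T_jf(x)=\int f(x-\gamma(t))\psi(2^{-j}t)\,\frac{dt}{t}.
\end{align*}
Each $T_j$ is an average over the arc $\gamma(\{|t|\sim2^j\})$ and is supported at anisotropic scale $2^j$. It is therefore localized to a $\gamma$-cube of side $\ell=2^j$ and its dilate.

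The first analytic input is an $L^p$-improving estimate with translation continuity. Rescaling by $\delta^\gamma_{2^{-j}}$ reduces $T_j$ to the unit-scale operator $T_0$. The measure on a compact, nondegenerate piece of the monomial curve has nonvanishing torsion there, because $t\neq0$ and $\alpha_j$ are distinct. Hence $T_0$ maps $L^r\to L^{s'}$ for $(1/r,1/s')$ in the closed polygon $\Oc_d$. This is the Christ / Tao--Wright restricted-weak-type range at the vertices, interpolated to the interior. In addition, for interior points we want a gain
\begin{align*}
\|(T_0-\tau_hT_0)f\|_{L^{s'}}\lesssim|h|^{\epsilon}\|f\|_{L^r}.
\end{align*}
We would get this by interpolating the smoothing estimate $L^2\to L^2_{1/d}$, which follows from stationary phase for $\widehat{\sigma}$, against the endpoint bounds, using that we are strictly inside $\Oc_d$. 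By scaling, both estimates hold uniformly for every $T_j$ relative to $\gamma$-cubes of scale $2^j$.

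Next we run the sparse iteration on the dyadic approximation grid of $\gamma$-cubes. Using the shifted-grid lemma, every $\gamma$-cube is contained in a dyadic box of comparable size from one of finitely many grids. Fix the top cube $Q_0$ containing the supports. Define the stopping collection as the maximal dyadic subcubes $P$ on which
\begin{align*}
\langle f\rangle_{r,P}>C\langle f\rangle_{r,Q_0}\quad\text{or}\quad\langle g\rangle_{s,P}>C\langle g\rangle_{s,Q_0}.
\end{align*}
Their union has measure at most $|Q_0|/2$. Decompose $f=b+g_{\rm good}$, where $b=\sum_P(f-\text{mean})\mathbf 1_P$ or simply $f\mathbf 1_P$, and likewise for $g$. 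Pieces $T_j$ with $2^j$ below the scale of a stopping cube $P$ and acting inside $P$ are deferred to the recursion on $P$. What remains is the sum over $j$ of $\langle T_j f,g\rangle$ restricted to scales above the stopping cubes. This must be bounded by $|Q_0|\langle f\rangle_{r,Q_0}\langle g\rangle_{s,Q_0}$.

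\textbf{Main obstacle.} The hard step is summing over $j$ without logarithmic loss. The good parts are handled by $L^2$ boundedness of $H_\gamma$ and the bounded averages. For the bad parts, we would use the cancellation of the kernel ($\int\psi(t)\,dt/t=0$ after pairing the $\pm t$ pieces) together with the translation-continuity estimate. Consider a piece $T_j$ acting on a function supported in a stopping cube of smaller scale $2^k$, $k<j$. We would write it as $T_j$ minus its translate-average and obtain a decay factor $2^{-\epsilon(j-k)}$. Summing the geometric series in $j-k$ and using disjointness of the stopping cubes gives the bound. Iterating over stopping cubes produces a sparse family with $\delta_\gamma$ depending only on the grid comparability constants. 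Finally, the dyadic boxes are replaced by comparable genuine $\gamma$-cubes, which keeps sparseness up to constants.
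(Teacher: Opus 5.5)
Your outline follows the same architecture as the Cladek--Ou argument that the paper cites for this theorem and re-verifies in the proof of \Thmref{thm:torsion-main}: single-scale pieces, $L^r\to L^{s'}$ improving, a translation modulus obtained by interpolating the $|y|^{1/d}$ bound on $L^2$, and stopping over nested adapted grids. For monomial curves the exact homogeneity $T_j=D_{2^j}^{-1}T_0D_{2^j}$ does make the single-scale input uniform. The gaps are in the stopping step.

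\textbf{The decay mechanism.} The factor $2^{-\epsilon(j-k)}$ cannot come from $\int\psi(t)\,dt/t=0$. Choose $i$ with $\varepsilon_i\ne\varepsilon_i'$. Then the half-arcs $\gamma(\{t\sim2^j\})$ and $\gamma(\{t\sim-2^j\})$ are at distance $\gtrsim2^{j\alpha_i}$ in the $i$th coordinate. So for a function supported on one box of scale $2^k\ll2^j$, at most one half-arc meets the support at each point, and the $\pm t$ pieces never cancel. Splitting $T_j$ as $(T_j-T_j*\phi_k)+T_j*\phi_k$ gives decay only for the first piece; the second is not small at any single scale. The gain has to come from cancellation of a \emph{bad function}. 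With $b_P=(f-\langle f\rangle_P)\mathbf 1_P$ one dualizes, $\langle T_jb_P,u\rangle=\langle b_P,T_j^*u-\langle T_j^*u\rangle_P\rangle$, and applies the (adjoint) translation modulus. With your option $b=f\mathbf 1_P$ nothing is left to cancel. The improving and continuity estimates then give scale-by-scale bounds that do not decay in $j-k$, and their sum diverges logarithmically. Kernel cancellation enters the argument only through bounds for truncated sums.

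\textbf{The mixed terms.} You do not address the good--bad terms, and the $L^2$ bound for $H_\gamma$ does not control them. Since $1/r+1/s>1$ forces $\min\{r,s\}<2$, at least one of $\|Tg_1\|_{s'}\|b_2\|_s$ and $\|b_1\|_r\|T^*g_2\|_{r'}$ needs an $L^p$ bound with $p>2$. The missing input is boundedness on $L^p$, $1<p<\infty$, of all truncations $\sum_{j=k_0}^{k_1}T_j$, uniformly in $k_0\le k_1$. This is one of the four hypotheses of \cite[Lemma~2.3]{CladekOu2018} that the paper lists, cf.\ \eqref{eq:torsion-truncation-lp-r40} and \eqref{eq:torsion-stopping-mixed-r40}. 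It requires a Duoandikoetxea--Rubio de Francia type bootstrap as in \Lemref{lem:uniform-annular-truncations-r42}, not just $L^2$.

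\textbf{Localization of the recursion.} ``Deferring $T_j$ acting inside $P$'' to the recursion on $P$ is not an exact identity. $T_j(f\mathbf 1_P)$ leaks out of $P$ by about $2^{j\alpha_i}$ in the $i$th coordinate, and $f$ may concentrate near $\partial P$, so these boundary interactions cannot be charged to $\langle f\rangle_{r,P}$. The recursion is made exact by two facts:
\begin{itemize}
\item the support property $\supp T_j(f\mathbf 1_{\frac13Q})\subseteq Q$ for $2^{j+N}\le\ell(Q)$, cf.\ \eqref{eq:torsion-support-stopping-r40};
\item the exact central-third partition of \Lemref{lem:corrected-grid} over the $3^d$ nested alternating-shift grids (a fixed $1/3$-shift can fail nesting for nonintegral $\alpha_j$).
\end{itemize}
The resulting $3^d$ sparse families are then merged into one. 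The containment of $\gamma$-cubes in shifted dyadic boxes that you invoke handles only the final comparison of sparse forms, not this localization.
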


Cladek--Ou \cite[Section~1.2, p.~419]{CladekOu2018} formulate the torsion condition on $(-1,1)$, whereas their displayed local integral extends to the endpoints $-1$ and $1$. Under that literal open-interval hypothesis, \Propref{prop:endpoint-flat} supplies counterexamples for the full integral. Our positive statement assumes \eqref{eq:torsion} on the closed interval. Its proof verifies the scale-uniform hypotheses used in \cite[Section~3, Lemma~3.2]{CladekOu2018}, rather than invoking the open-interval formulation without qualification. Restriction to $[-a,a]$, $0<a<1$, recovers a local conclusion from interior nonvanishing torsion.

\begin{alphatheorem}[{Restricted form of \Extref{CladekOu2018}{Theorem~2} in \cite{CladekOu2018}}]\label{thm:B}
Let $\gamma:[-1,1]\to\R^d$ be a $C^d$ curve with $\gamma(0)=0$ satisfying \eqref{eq:torsion}, $d\ge2$. Suppose that $(1/r,1/s')\in\inte(\Oc_d)$. If $f,g$ are compactly supported bounded scalar functions on $\R^d$, then there exists a $\delta_\gamma$-sparse collection $\Sc\subseteq\Qc_\gamma$ such that
\begin{align}\label{eq:theoremB}
        |\langle H_{\gamma,\mathrm{loc}} f,g\rangle|
        \le C_{d,\gamma,r,s}\Lambda_{\Sc,r,s}(f,g),
\end{align}
where $H_{\gamma,\mathrm{loc}}$ is the local operator in \eqref{eq:local-H}.
\end{alphatheorem}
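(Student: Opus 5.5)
The plan is to follow the Cladek--Ou scheme for Theorem~2, checking at each step that only the closed-interval torsion bound \eqref{eq:torsion} is used, and that this bound makes every constant uniform in the scale. In the Gram--Schmidt frame $e_1,\dots,e_d$ at $t=0$, fix a smooth even cutoff $\psi$ supported in $\{1/2\le|t|\le2\}$ with $\sum_{k\ge1}\psi(2^kt)=\1_{0<|t|\le1}$ except on a bounded range of scales near $|t|=1$. The finitely many leftover terms are absorbed into one or two extra pieces at scale $\sim1$. Set
\begin{align*}
T_kf(x)=\int f(x-\gamma(t))\psi(2^kt)\,\frac{dt}{t},
\end{align*}
so that $\Hgamloc=\sum_{k\ge1}T_k$ plus an $O(1)$-scale remainder. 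Because $\psi$ is even and $\psi(t)/t$ is odd, each $T_k$ has mean zero. Since $\gamma(0)=0$ and $|\langle\gamma(t),e_j\rangle|\le C_\gamma|t|^j$, the kernel of $T_k$ is supported in a fixed multiple of the adapted box at scale $2^{-k}$, with side-lengths $(2^{-k},\dots,2^{-kd})$. This is the localization property the stopping time needs.

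The analytic input consists of two uniform estimates for the rescaled pieces. The first is $L^p$-improving: $\|T_k\|_{L^{r}\to L^{s'}}\lesssim|Q_k|^{1/s'-1/r}$ for $(1/r,1/s')$ in the closed triangle determined by the vertices of $\Oc_d$. The second is a continuity bound $\|T_k-T_k\tau_h\|_{L^r\to L^{s'}}\lesssim(|h|_{\gamma}/2^{-k})^{\eta}|Q_k|^{1/s'-1/r}$ for translations $h$ that are small in the adapted quasi-norm. To obtain them, rescale $\gamma_k(t)=\delta_{2^k}(\gamma(2^{-k}t))$ for $t\in\supp\psi$. Taylor expansion together with the $C^d$ hypothesis shows that the family $\{\gamma_k\}$ lies in a compact $C^d$ set. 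The determinant $\det(\gamma_k',\dots,\gamma_k^{(d)})(t)$ equals $\det(\gamma',\dots,\gamma^{(d)})(2^{-k}t)$ times a power of $2^k$ that is exactly cancelled by the dilation. It is therefore bounded below by the infimum in \eqref{eq:torsion}. The restricted weak-type bound at the endpoint, uniform over the family, then follows from the Christ/Tao--Wright offspring-curve argument, which needs only a lower bound on torsion and a $C^d$ upper bound. Interpolation gives the interior of the triangle. Continuity follows by interpolating a trivial $L^1$ or $L^\infty$ translation bound against the improving bound. These are the scale-uniform hypotheses of \cite[Lemma~3.2]{CladekOu2018}.

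With these estimates in hand, I will run the Cladek--Ou stopping-time iteration over the grid $\Kc$ of \Lemref{lem:corrected-grid}. Start from a top box containing the supports. Select maximal subboxes where $\langle f\rangle_r$ or $\langle g\rangle_s$ exceeds a large multiple of its parent average, and split $\sum_kT_k$ into the part acting at scales below the stopping boxes and the part above them. The part above is controlled by the improving and continuity estimates through a Calder\'on--Zygmund-type good/bad decomposition, which yields $|Q|\langle f\rangle_{r,Q}\langle g\rangle_{s,Q}$. The part below is recursed. Sparseness with constant $\delta_\gamma$ comes from the Carleson packing of the stopping boxes. The last step replaces the dyadic approximation boxes by genuine boxes in $\Qc_\gamma$, using the comparability $2^{-1}2^{k\alpha_j}<2^{\lfloor k\alpha_j\rfloor}$ in a slightly enlarged form. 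This costs only constants and preserves sparseness.

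The main obstacle is the uniformity in $k$ of the endpoint $L^p$-improving bound for the rescaled curves, in particular near $|t|\sim2^{-k}\cdot2$ as $k$ varies, and at $k\sim1$, where the parameter reaches $t=\pm1$. This is exactly where the closed-interval infimum is used, and where \Propref{prop:endpoint-flat} shows an open-interval hypothesis would fail. My first attempt is a compactness argument on the rescaled family combined with the stability of the Tao--Wright/Christ bounds under $C^d$ perturbations that keep the torsion bounded below. If the perturbative stability is not directly citable, I will instead quote the uniform version of the Dendrinos--Wright-type geometric inequality for the affine arclength measure, which depends only on the degree-type quantities controlled here.
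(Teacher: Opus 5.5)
You have the paper's overall architecture. You rescale to $\Gamma_\lambda(u)=\delta_\lambda^{-1}\gamma(\lambda u)$, note that the torsion determinant is exactly preserved so the closed-interval infimum controls every scale including $t=\pm1$, obtain uniform endpoint restricted weak-type bounds, run the Cladek--Ou stopping time, and enlarge to genuine boxes. (For the endpoint bound, the paper makes your ``stability'' step concrete through the short-interval divided-difference Jacobian inequality \eqref{eq:torsion-configuration-r38}, which is the hypothesis of \cite[Proposition~3.1]{DendrinosStovall2015}. The Dendrinos--Wright inequality is a polynomial statement with degree-dependent constants, so it is not the right fallback for $C^d$ curves.)

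However, your continuity step fails as written. The $L^1\to L^1$ and $L^\infty\to L^\infty$ norms of $T_k-T_k\tau_h$ both equal $\|\mu_k-\tau_h\mu_k\|_{\mathrm{TV}}$, where $\mu_k$ is the kernel measure of $T_k$. Translation is norm-continuous on $M(\R^d)$ only for absolutely continuous measures, and $\mu_k$ is a nonzero measure carried by a curve. So this quantity does not tend to $0$ as $h\to0$. The improving bound has no $h$-dependence either, so interpolation cannot produce a factor $(|h|_\gamma/2^{-k})^\eta$.

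The missing ingredient is Fourier decay. Curvature must enter through a van der Corput estimate $|\widehat{\mu_\lambda}(\xi)|\lesssim(1+|\xi|)^{-1/d}$, uniform in $\lambda$, for the rescaled pieces. This is the paper's \eqref{eq:torsion-uniform-fourier-r36}, obtained from the uniform finite-type bound \eqref{eq:torsion-uniform-finite-type-r36}. It is needed in bounded-variation form so that the sharp cutoffs of your scale-$\sim1$ pieces at $|t|=1$ are allowed. Plancherel then gives $\|A_\lambda-\tau_yA_\lambda\|_{L^2\to L^2}\lesssim|y|^{1/d}$. It is this $L^2$ estimate that is interpolated with the improving bound to yield \eqref{eq:torsion-translation-modulus-r38}.

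Two further inputs of the stopping lemma are missing.

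First, improving and continuity estimates cannot control the good--good term. It involves an unbounded number of scales, each contributing $O(|Q_0|)$ with no decay in $k$. You therefore need bounds for the truncations $\sum_{k_0\le k\le k_1}T_k$ that are uniform in $k_0,k_1$: on $L^2$ for the good--good term, and on $L^{s'}$ and $L^{r'}$ for the mixed terms. The paper proves these in \Lemref{lem:uniform-annular-truncations-r42}, again from the Fourier decay together with $\mu_k(\R^d)=0$, and records them as \eqref{eq:torsion-truncation-lp-r40}.

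Second, support of $T_k(f\1_Q)$ in a fixed multiple of $Q$ does not put it inside $Q$. On the single grid $\Kc$, the recursion below a stopping box therefore does not decouple from the values of $g$ outside that box. The paper instead uses all $3^d$ nested alternating-shift grids of \Lemref{lem:corrected-grid}.
\begin{itemize}
\item It writes $f=\sum_{\mathbf a}\sum_{Q\in\mathscr D_{\mathbf a}(q)}f\1_{\frac13Q}$.
\item It attaches to generation $q$ only the annulus at scale $2^{q-N}$ with $N=N_{\mathrm{loc}}(\gamma)$, so that $\supp A_{\gamma,2^{q-N}}(f\1_{\frac13Q})\subseteq Q$ by \eqref{eq:torsion-support-stopping-r40}.
\item It merges the resulting $3^d$ sparse families into one via the Bollob\'as--Varopoulos theorem.
\end{itemize}
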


We establish the single-scale sharpness obstruction by three separated-support Knapp tests. The positive sparse bounds are obtained through a direction-preserving lifting argument on anisotropic grids and the matrix sparse embedding of \Extref{KakaroumpasNguyenVardakis2026}{Theorem~1.3} \cite{KakaroumpasNguyenVardakis2026}. In view of the general scalar-to-convex-body lifting theorem \Extref{LaukkarinenLorist2026}{Theorems~2.8 and~3.4} \cite{LaukkarinenLorist2026}, the essential issues here are the anisotropic dyadic reduction and control of the matrix characteristic over all translated adapted cubes. This leads to two questions.

\begin{question}\label{question:convex}
Let $H_\gamma$ be one of the curve Hilbert transforms in \Thmref{thm:A} or \Thmref{thm:B}. Does $\Hvec$ satisfy a bilinear convex body sparse form bound in the same exponent region as the scalar theorem?
\end{question}

\begin{question}\label{question:matrix}
Can such a convex body sparse form be inserted into a limited-range matrix weighted embedding theorem to give quantitative $L^p(W)$ boundedness for $\Hvec$?
\end{question}

The dyadic lifting yields genuine adapted boxes, while endpoint contact supplies an independent obstruction. For related frameworks see \cite{MetricSparse2022,MatrixSurvey2025}.

\subsection{Main results}\label{subsec:main-results}
\vspace{0.1em}\noindent
The two positive theorems cover $\inte(\Oc_d)$ and the necessity statement excludes exponents outside $\overline{\Oc_d}$ in the stated range. The boundary is not settled. See \Defref{def:convex-average-intro} for the convex body form.

\begin{theorem}\label{thm:monomial-main}
Let $d\ge2$, $n\ge1$, and let $\gamma$ be a monomial curve in $\R^d$ as in \eqref{eq:monomial-curve}. Suppose that $(1/r,1/s')\in\inte(\Oc_d)$. If $\vecf,\vecg:\R^d\to\C^n$ are compactly supported bounded functions, then there exist $\delta_\gamma>0$ and one $\delta_\gamma$-sparse family $\Sc\subseteq\Qc_\gamma$ of genuine curve-adapted cubes such that
\begin{align}\label{eq:main-monomial}
        |\langle \Hvec\vecf,\vecg\rangle|
        \le C_{d,\gamma,r,s,n}\Lambda^{\CB}_{\Sc,r,s}(\vecf,\vecg).
\end{align}
The family depends on the tested functions and has a positive sparsity constant determined only by the adapted geometry.
\end{theorem}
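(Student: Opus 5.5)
The plan is to lift the scalar sparse bound of \Thmref{thm:A} to the convex body form by the abstract lifting theorem of \Extref{LaukkarinenLorist2026}{Theorems~2.8 and~3.4} \cite{LaukkarinenLorist2026}. The difficulty is that this theorem needs a dyadic framework, while \Thmref{thm:A} is stated for $\gamma$-cubes, so the argument has three stages.

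\emph{Step 1: the grid.} Normalize $\alpha_1=1$ as in the introduction and work on the corrected grid $\Kc=\mathscr D_\star$ of \Lemref{lem:corrected-grid}. Its alternating shifts give a genuinely nested grid of dyadic approximation boxes whose central thirds partition space, and every box is uniformly comparable with a genuine $\gamma$-cube. From the nesting I would extract the two properties the lifting requires. First, the enlargements satisfy $Q^+\subseteq P^+$ for $Q\subseteq P$, as in \eqref{eq:adapted-plus-nesting}. Second, the Carleson transfer \eqref{eq:adapted-carleson-transfer} holds: a $\delta$-sparse family in $\Kc$ is $\Lambda$-Carleson with constant depending only on $\delta$ and the geometry, and conversely. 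The converse is obtained from the standard equivalence, which only needs the nested tree structure and the bounded ratio $|\widehat Q|/|Q|\le 2^{\sum_j\lceil\alpha_j\rceil}$ between parent and child.

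\emph{Step 2: scalar input in localized, dyadic form.} The lifting theorem does not take the bare sparse bound as input. It needs a localized stopping-time estimate: for each $Q_0\in\Kc$ and each $f,g$ supported near $Q_0^+$, a bound for the truncated operator $H_\gamma^{Q_0}$ (the scales $\le\ell(Q_0)$) of the form
\[
|\langle H^{Q_0}_\gamma f,g\rangle|\lesssim |Q_0|\langle f\rangle_{r,Q_0^+}\langle g\rangle_{s,Q_0^+},
\]
together with the recursive error decomposition over maximal stopping boxes. I would take this directly from the Cladek--Ou iteration \cite{CladekOu2018}: single-scale $L^p$-improving bounds with translation continuity, valid for $(1/r,1/s')\in\inte(\Oc_d)$, applied at scales adapted to the $\Kc$ boxes. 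The comparability in Step 1 ensures that these scales differ from genuine $\gamma$-cube scales by bounded factors.

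\emph{Step 3: lifting and return to genuine cubes.} Because $\Hvec$ acts coordinatewise, the scalar localized estimate holds for $\langle\vecf,e\rangle$ for every unit $e\in\C^n$ uniformly. The lifting theorem then produces a sparse $\mathcal S_0\subseteq\Kc$ with
\[
|\langle\Hvec\vecf,\vecg\rangle|\lesssim_n\sum_{Q\in\mathcal S_0}|Q|\llangle\vecf\rrangle_{r,Q^+}\circ\llangle\vecg\rrangle_{s,Q^+}.
\]
The dimensional factor comes from John ellipsoids. Finally, replace each $Q^+$ by a genuine $\gamma$-cube $\widetilde Q\supseteq Q^+$ with $|\widetilde Q|\lesssim|Q|$. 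Monotonicity of $\llangle\cdot\rrangle$ up to the factor $(|\widetilde Q|/|Q^+|)^{1/r}$ controls the averages, and sparseness passes to $\{\widetilde Q\}$ by the Carleson transfer with a smaller constant $\delta_\gamma$. This yields one family $\Sc\subseteq\Qc_\gamma$ of genuine cubes.

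\emph{Main obstacle.} The hard part is Step 2. One must check that the Cladek--Ou stopping argument really yields the localized recursive estimate on the shifted nonintegral-exponent grid $\Kc$, rather than only the final sparse bound. In particular, the tails of the truncations at intermediate scales must be absorbed using nested enlargements. I would try first to show that every single-scale piece at scale $2^{-k}$ is supported in $Q^+$ for the $\Kc$ box at generation $k$, and then run their iteration verbatim with those boxes.
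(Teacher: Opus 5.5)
\textbf{The gap is in your Step 2.} As displayed, your localized bound $|\langle H^{Q_0}_\gamma f,g\rangle|\lesssim|Q_0|\langle f\rangle_{r,Q_0^+}\langle g\rangle_{s,Q_0^+}$ is false. It asserts $\|H^{Q_0}_\gamma\|_{L^r\to L^{s'}}\lesssim|Q_0|^{-\theta}$ on functions supported near $Q_0^+$. Take instead $f=\mathbf1_B$ and $g=\mathbf1_{\gamma(\lambda)+B}$, with $B$ a small adapted ball of scale $\lambda\ll\ell(Q_0)$. This pair has pairing $\gtrsim|B|=|B|^{-\theta}\|f\|_r\|g\|_s$: only $t\approx\lambda$ contributes, and the negative branch misses. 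So what your route needs is a recursive estimate: $\langle H^{Q_0}f,g\rangle-\sum_R\langle H^{R}(f\mathbf1_{R^+}),g\mathbf1_R\rangle$ controlled by the top term, for stopping families $\{R\}$ dictated by the vector-valued data rather than by the scalar function being tested. Proving this means redoing the Cladek--Ou good/bad decomposition, mixed terms and translation-continuity estimates on $\mathscr D_\star$ with externally prescribed stopping sets. You name this as the main obstacle but do not carry it out, so the argument is incomplete at its only analytic step. The premise behind it is also mistaken. The lifting in the form of \Propref{prop:adapted-lifting} takes as input only the bare scalar sparse bound for every scalar pair, plus the $L^p\times L^{p'}$ continuity \eqref{eq:adapted-continuity-r33}. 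No new single-scale analysis is needed.

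\textbf{The missing idea is randomization applied to the black-box bound of \Thmref{thm:A}.}
\begin{enumerate}
\item Transfer the genuine-cube output of \Thmref{thm:A} to the grid. Its boxes are not in $\mathscr D_\star$. For each genuine $R$, pick $Q_R$ among the boundedly many same-generation grid boxes meeting $R$, maximizing the $g$-average, and use $R\subseteq Q_R^+$. This gives an asymmetric form $\sum_{Q\in\mathcal T}|Q|\langle f\rangle_{r,Q^+}\langle g\rangle_{s,Q}$. The Carleson packing \eqref{eq:adapted-carleson-transfer} for $\mathcal T$ follows from $Q^+\subseteq P^+$ in \eqref{eq:adapted-plus-nesting}. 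This, not a ``sparse iff Carleson within $\Kc$'' statement, is the transfer actually needed.
\item Stop on the maximal functions of the Auerbach components over $P^+$ and $P$. Select maximal $R$ with $R^{++}\subseteq E_P$, so the $R^+$ have bounded overlap.
\item Apply the scalar sparse bound to $m_\varepsilon f_j^P$ and $n_\varepsilon g_j^P$, built from Bernoulli variables. In every realization, each sparse box $Q$ inside a stopping box $R$ contributes zero: $Q^+\subseteq R^+$ and $Q\subseteq R$, and either $m_\varepsilon$ vanishes on $R^+$ or $n_\varepsilon$ vanishes on $R$. Ancestors of $P$ form a geometric tail. The remaining boxes have averages bounded by the top ones.
\item The expectation identity \eqref{eq:adapted-lift-random-identity} then returns exactly the top term minus the stopping-box terms.
\end{enumerate}
This yields your recursive estimate directly from \Thmref{thm:A}. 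Your Step 3, the enlargement to genuine cubes with witnesses $E_Q\subseteq Q\subseteq Q^+$, then matches the paper's final step.
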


For local torsion curves, the origin Taylor frame replaces global monomial dilations.

\begin{theorem}\label{thm:torsion-main}
Let $d\ge2$, $n\ge1$, and let $\gamma:[-1,1]\to\R^d$ be a $C^d$ curve with $\gamma(0)=0$ satisfying the uniform torsion condition \eqref{eq:torsion}. Suppose that $(1/r,1/s')\in\inte(\Oc_d)$. If $\vecf,\vecg:\R^d\to\C^n$ are compactly supported bounded functions, then there exist $\delta_\gamma>0$ and one $\delta_\gamma$-sparse family $\Sc\subseteq\Qc_\gamma$ of genuine curve-adapted cubes such that
\begin{align}\label{eq:main-torsion}
        |\langle \Hvecloc\vecf,\vecg\rangle|
        \le C_{d,\gamma,r,s,n}\Lambda^{\CB}_{\Sc,r,s}(\vecf,\vecg).
\end{align}
The family depends on the tested functions and has a positive sparsity constant determined only by the adapted geometry.
\end{theorem}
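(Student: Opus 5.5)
The plan follows the same route as for \Thmref{thm:monomial-main}, with the global monomial dilations replaced by the Taylor frame at the origin. First I apply the fixed orthogonal change of coordinates given by the Gram--Schmidt frame $e_1,\ldots,e_d$. In that frame the adapted boxes are translates of $\operatorname{diag}(\ell,\ldots,\ell^d)[0,1]^d$, which is just the monomial geometry with $\alpha_j=j$. Since every $\alpha_j$ is an integer, the dyadic approximation boxes at scale $2^{k}$ are genuine adapted boxes. Hence the corrected grid $\Kc=\mathscr D_\star$ of \Lemref{lem:corrected-grid} is available with alternating shifts, and it satisfies the nesting $Q^+\subseteq P^+$ for $Q\subseteq P$ in \eqref{eq:adapted-plus-nesting}.

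Next I decompose $\Hgamloc=\sum_{k\ge0}T_k$, where $T_k f(x)=\int f(x-\gamma(t))\psi_k(t)\,dt/t$ and $\psi_k$ is supported in $|t|\sim2^{-k}$. Only the scales $\ell\le1$ occur because the integral is local. The key scalar input is uniform in $k$: after rescaling by $\operatorname{diag}(2^{k},\ldots,2^{kd})$, each piece becomes convolution with a measure on the curve $\gamma_k(t)=D_{2^k}\gamma(2^{-k}t)$ for $|t|\sim1$. By the Taylor bound $|\langle\gamma(t),e_j\rangle|\le C_\gamma|t|^j$ and the $C^d$ hypothesis, the family $\gamma_k$ is bounded in $C^d$. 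Its torsion equals the torsion of $\gamma$ at $2^{-k}t$ up to the Jacobian factor. The closed-interval assumption \eqref{eq:torsion} is used exactly here: it bounds this torsion below uniformly, including the outermost pieces near $t=\pm1$. This gives scale-uniform $L^p$-improving estimates and translation-continuity estimates, which are the hypotheses of \cite[Lemma~3.2]{CladekOu2018}. So the scalar sparse bound, \Thmref{thm:B}, holds for every truncated sum $\sum_{k\le N}T_k$ in a form adapted to $\Kc$. The constants do not depend on $N$, and the stopping collections lie in $\Kc$.

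Then I lift. I check the hypotheses of the scalar-to-convex-body lifting of \Extref{LaukkarinenLorist2026}{Theorems~2.8 and~3.4} \cite{LaukkarinenLorist2026} for the multi-scale operator on the grid $\Kc$. These are the single-scale localization $\supp T_k(f\1_Q)\subseteq Q^+$ for $\ell(Q)=2^{-k}$, the nesting \eqref{eq:adapted-plus-nesting}, and the Carleson transfer \eqref{eq:adapted-carleson-transfer} between the enlarged and the original boxes. Applying the lifting to the coordinatewise extension $\Hvecloc$ gives a convex body sparse bound over a sparse family in $\Kc$. This works because the scalar estimates hold uniformly for the functions $\langle\vecf,e\rangle$ and the convex body averages have the support function $h_{\llangle\vecf\rrangle_{r,Q}}(e)=\langle|\langle\vecf,e\rangle|\rangle_{r,Q}$. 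Since the enlargements $Q^+$ are comparable to genuine adapted boxes and $\llangle\cdot\rrangle_{r,Q}$ is monotone under enlargement up to the volume ratio, I replace each box by a genuine adapted box. The replacement also preserves sparsity, with a constant depending only on $d$ and $\gamma$. This produces one family $\Sc\subseteq\Qc_\gamma$. Finally I let $N\to\infty$ using bounded, compactly supported $\vecf$ and $\vecg$. Only finitely many scales matter at the coarse end, and the sparse families can be taken from a fixed finite-depth collection, so a limiting family exists.

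I expect the main obstacle to be the scale-uniformity in the second step near the endpoints and at coarse scales. Uniformity of the rescaled curves $\gamma_k$ in $C^d$, with torsion bounded below, needs the lower bound in \eqref{eq:torsion} on all of $[-1,1]$, which is consistent with \Propref{prop:endpoint-flat}. It also needs a handling of the top piece $k=0$ that reaches the endpoints, which I would treat directly with the closed-interval bound. A secondary point is that the lifting theorem must be applied on the alternating-shift grid rather than a standard dyadic grid. For this I would rely on the nesting \eqref{eq:adapted-plus-nesting} and on the fact, from \Lemref{lem:corrected-grid}, that the central thirds of the grid partition space.
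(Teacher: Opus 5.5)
Your outline follows the same route as the paper. The rescaled curves $\Gamma_\lambda(u)=\delta_\lambda^{-1}\gamma(\lambda u)$ form a $C^d$-bounded family whose torsion is exactly that of $\gamma$ at $\lambda u$, so the closed-interval bound gives scale-uniform improving and translation estimates. These feed the Cladek--Ou stopping argument to give \Thmref{thm:B}, which is lifted on $\mathscr D_\star$ using $Q^+\subseteq P^+$ and the Carleson transfer, and the selected boxes are then enlarged to genuine ones. Three steps need repair, however.

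\textbf{Missing truncation bounds.} The Cladek--Ou stopping lemma needs more than the single-scale improving and translation bounds. Its good--good and mixed terms are controlled by $L^2$, $L^{s'}$ and $L^{r'}$ bounds for the truncations $\int_{\varepsilon\le|t|\le\rho}f(\cdot-\gamma(t))\,dt/t$, uniform in $0<\varepsilon<\rho\le1$. You never supply these. The paper proves them in \Lemref{lem:uniform-annular-truncations-r42} from the annular decay $|\widehat\mu_k(\xi)|\lesssim\min\{2^{-k}\rho_*(\xi),(2^{-k}\rho_*(\xi))^{-1/d}\}$. You could instead cite the classical $L^p$ theory for curves of finite type, checked in the $C^d$ setting.

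\textbf{The limit $N\to\infty$.} Your passage to the limit is incorrect as stated. The stopping boxes for $\sum_{k\le N}T_k$ reach parameter length about $2^{-N}$, so they do not come from a fixed finite-depth collection. A subsequential limit of such families can also lose the mass carried by ever smaller boxes. The fix is to pass to the limit in the scalar form and lift only the limiting form, as the paper does. \Propref{prop:adapted-lifting} needs only the scalar sparse bound and $L^p\times L^{p'}$ continuity. The latter follows from $\Lambda_{\Sc,r,s}(f,g)\le\delta^{-1}\int M_r^\gamma f\,M_s^\gamma g$. Alternatively, this bound makes the supremum of $\Lambda^{\CB}_{\Sc,r,s}(\vecf,\vecg)$ over all $\delta$-sparse families finite, and a family attaining half of that supremum works.

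\textbf{The single grid.} The central thirds of one grid cover only a $3^{-d}$ fraction of $\R^d$ at each generation. So the stopping argument must run over all $3^d$ grids $\mathscr D_{\mathbf a}$, and its families do not lie in $\Kc$. The passage to $\mathscr D_\star$ is the asymmetric reduction $R\mapsto Q_R$ with $R\subseteq Q_R^+$, together with its multiplicity and Carleson estimates (Step~1 of the proof of \Propref{prop:adapted-lifting}). That step is where the transfer you list is actually used, and it should appear explicitly in your argument.
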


The local conclusion also applies to $\pv\int_{-a}^{a}f(x-\gamma(t))\,dt/t$ when $0<a<1$ and the torsion of $\gamma$ is nonzero on $(-1,1)$. Indeed, the rescaled curve $\gamma_a(t)\coloneqq\gamma(at)$ has uniformly nonvanishing torsion on $[-1,1]$ and determines the same orthonormal Taylor frame. This provides the usual genuinely local formulation without imposing an endpoint condition at $t=\pm1$.

The matrix bound uses $[W]_{p,r,s}$ from \eqref{eq:matrix-characteristic}, measured over all translated adapted boxes.

\begin{corollary}\label{cor:matrix}
Let $1\le r<p<s'<\infty$, $n\ge1$, and let $W:\R^d\to\C^{n\times n}$ be a matrix weight satisfying $[W]_{p,r,s}<\infty$. Assume that $\gamma$ is either a monomial curve in $\R^d$ or a $C^d$ curve on $[-1,1]$ with $\gamma(0)=0$ satisfying \eqref{eq:torsion}. If $(1/r,1/s')\in\inte(\Oc_d)$, then the following corresponding estimate holds:
\begin{align}\label{eq:matrix-main}
\begin{cases}
        \|\Hvec\|_{L^p(W)\to L^p(W)}
        \le C_{d,\gamma,p,r,s,n}
        [W]_{p,r,s}^{\frac{s'-p}{s'(p-r)}+\frac1{p'}}, & \gamma\hbox{ monomial},\\[0.35em]
        \|\Hvecloc\|_{L^p(W)\to L^p(W)}
        \le C_{d,\gamma,p,r,s,n}
        [W]_{p,r,s}^{\frac{s'-p}{s'(p-r)}+\frac1{p'}}, & \gamma\hbox{ torsion}.
\end{cases}
\end{align}
The constant is independent of the matrix weight $W$.
\end{corollary}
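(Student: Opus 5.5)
The plan is to derive \Corref{cor:matrix} directly from the convex body sparse bounds of \Thmref{thm:monomial-main} and \Thmref{thm:torsion-main}. These are fed into the limited-range matrix sparse embedding \Extref{KakaroumpasNguyenVardakis2026}{Theorem~1.3} \cite{KakaroumpasNguyenVardakis2026}. The argument runs by duality.

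First, for $\vecf\in L^p(W)$ and $\vecg\in L^{p'}(W^{-1/(p-1)})$ (the dual space under the unweighted pairing), I would reduce by density to compactly supported bounded $\vecf,\vecg$. The operator norm equals the supremum of $|\langle \Hvec\vecf,\vecg\rangle|$ over the unit balls. On bounded compactly supported functions with $(1/r,1/s')\in\inte(\Oc_d)$, the relevant theorem gives a single $\delta_\gamma$-sparse family $\Sc\subseteq\Qc_\gamma$ with
\begin{align*}
|\langle \Hvec\vecf,\vecg\rangle|\le C\,\Lambda^{\CB}_{\Sc,r,s}(\vecf,\vecg).
\end{align*}
The constant $C$ is uniform in $\vecf,\vecg$, and $\delta_\gamma$ depends only on the geometry.

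Second, I would verify the hypotheses of the abstract embedding in the space of homogeneous type $(\R^d,\rho_\gamma,dx)$. Here $\rho_\gamma$ is the anisotropic quasi-metric whose balls are comparable to $\gamma$-cubes (monomial dilations, or the Taylor frame with sides $\ell,\ldots,\ell^d$ in the torsion case). The embedding bounds $\Lambda^{\CB}_{\Sc,r,s}(\vecf,\vecg)$ by
\begin{align*}
C_{\delta}\,[W]_{p,r,s}^{\frac{s'-p}{s'(p-r)}+\frac1{p'}}\|\vecf\|_{L^p(W)}\|\vecg\|_{L^{p'}(W^{-1/(p-1)})}
\end{align*}
for sparse families of metric balls, with the characteristic taken over balls. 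Two transfers are needed:
\begin{itemize}
\item each sparse box $Q\in\Sc$ is contained in a ball $B_Q$ of comparable measure, so $\{B_Q\}$ is still sparse with constant $\gtrsim\delta_\gamma$, and the convex body averages over $Q$ are dominated (up to dimensional constants, by John ellipsoid/support function monotonicity) by those over $B_Q$;
\item every ball is contained in a translated adapted box of comparable measure and conversely, so the ball characteristic is equivalent to $[W]_{p,r,s}$ over $\Qc_\gamma$.
\end{itemize}
The second point is exactly the ball--box comparison \eqref{eq:matrix-ball-box-equivalence-r31}. The doubling of $\rho_\gamma$-balls follows from $|\delta_\lambda^\gamma Q|=\lambda^{\sum\alpha_j}|Q|$.

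Third, combining the two displays and taking suprema gives the bound, with constant depending on $d,\gamma,p,r,s,n$ but not on $W$. The exponent is then read off from the cited theorem.

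I expect the main obstacle to be the ball--box step in the torsion case. There the adapted boxes live only at scales $\ell\lesssim1$ in a fixed rotated frame, so the quasi-metric must be defined compatibly at large scales. One option is to truncate: since $\Hvecloc$ has kernel supported in a bounded region, only bounded-scale boxes appear in $\Sc$. Alternatively one can use the monomial-type extension $(\ell,\ldots,\ell^d)$ for all $\ell$, which is still a doubling quasi-metric. I would try the latter first, as it makes $\Qc_\gamma$ a genuine ball basis at every scale.
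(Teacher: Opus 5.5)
Your overall architecture matches the paper's: density and duality, a convex body sparse bound, the limited-range embedding of \cite{KakaroumpasNguyenVardakis2026}, and the ball--box comparison \eqref{eq:matrix-ball-box-equivalence-r31} for the characteristic. There is, however, a gap at the step where you invoke the embedding. The cited Theorem~1.3, in the form the paper uses it, is a \emph{dyadic} statement. It controls sums over a sparse subfamily of a single dyadic system satisfying the Christ-type hypotheses of \cite[Subsection~2.2]{KakaroumpasNguyenVardakis2026}: nestedness, covering, inner and outer balls, nesting of outer balls, and center separation. Both averages are taken over the metric dilates $\lambda Q$ of \cite[(2.4)]{KakaroumpasNguyenVardakis2026}. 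The family $\Sc\subseteq\Qc_\gamma$ given by \Thmref{thm:monomial-main} or \Thmref{thm:torsion-main} consists of arbitrary translated boxes, and your balls $B_Q$ lie in no such system. So the embedding ``for sparse families of metric balls'' is not available as stated.

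Sparsity of $\{B_Q\}$ is not the problem; dyadic structure is. To close this along your route, you would need three further steps. First, dominate each $B_Q$ by a cell of comparable measure from one of finitely many adjacent dyadic systems (Hyt\"onen--Kairema, or here the $3^d$ grids $\mathscr D_{\mathbf a}$ of \Lemref{lem:corrected-grid} via the one-third trick at each level). Second, transfer sparsity through the Carleson condition, as in \eqref{eq:adapted-multiplicity}--\eqref{eq:adapted-carleson-transfer}. Third, verify the dyadic-system hypotheses for each of those grids.

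The paper avoids the adjacent-system step entirely. For the corollary it uses not the genuine-box bound but the asymmetric dyadic bound \eqref{eq:adapted-asymmetric-lifting}, whose family already lies in $\mathscr D_\star$. \Propref{prop:B} then splits $\mathscr D_\star$ into $m_0$ grouped systems $\mathscr D_b^j=\mathscr D_\star(b-m_0j)$, so that consecutive generations have a fixed small ratio $\delta$ in the snowflaked metric $d_\gamma$. It checks the Christ-type hypotheses for these systems and proves $Q^+\subseteq\lambda Q$ with $|\lambda Q|\eqsim|Q|$. It then uses the exact inclusion \eqref{eq:dilated-average-inclusion}; no John ellipsoid argument is needed for that monotonicity.

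That is where the real geometric work lies, not at large scales in the torsion case. There, $\Qc_\gamma$ and $\mathscr D_\star$ contain boxes of every scale with sides $(\ell,\ldots,\ell^d)$, and $[W]_{p,r,s}$ is taken over all of them, which is your second option. Your first option (truncating to bounded scales) is wrong as stated. The sparse families contain top boxes containing the supports of $\vecf,\vecg$, and these have arbitrarily large scale.
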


Finally, scalar sharpness also excludes any fixed finite family of convex body sparse forms.

\begin{corollary}\label{cor:sharpness}
Let $\gamma$ be either a monomial curve in $\R^d$ or a $C^d$ curve on $[-1,1]$ with $\gamma(0)=0$ and torsion uniformly nonvanishing on $[-1,1]$, and let $n\ge1$. Suppose that $1<r,s<\infty$, $1/r+1/s>1$, and $(1/r,1/s')\in\R^2\setminus\overline{\Oc_d}$. For every fixed integer $J\ge1$ and sparsity parameter $\delta\in(0,1)$, there is no constant $C$ such that every pair of compactly supported bounded $\C^n$-valued functions admits $J$ $\delta$-sparse families $\Sc_1,\ldots,\Sc_J$ of genuine curve-adapted cubes or their comparable dyadic approximation boxes satisfying
\begin{align}\label{eq:forbidden-finite-sparse}
|\langle\overrightarrow H\vecf,\vecg\rangle|
\le C\sum_{j=1}^J\Lambda^{\CB}_{\Sc_j,r,s}(\vecf,\vecg),
\end{align}
where $\overrightarrow H$ denotes the full monomial transform or the local torsion transform, respectively.
\end{corollary}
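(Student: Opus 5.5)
The plan is to reduce Corollary~\ref{cor:sharpness} to the scalar case $n=1$ and then to contradict a scalar single-scale estimate using Knapp-type examples.

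\emph{Step 1: reduction to scalars.} Given scalar $f,g$, test with $\vecf=f e_1$ and $\vecg=g e_1$, where $e_1$ is the first standard basis vector of $\C^n$. Then $\overrightarrow H\vecf=(Hf)e_1$, so $\langle\overrightarrow H\vecf,\vecg\rangle=\langle Hf,g\rangle$. By the support function formula after \eqref{eq:convex-average}, $\llangle fe_1\rrangle_{r,Q}=\{\lambda e_1:|\lambda|\le\langle f\rangle_{r,Q}\}$, and hence $\llangle fe_1\rrangle_{r,Q}\circ\llangle ge_1\rrangle_{s,Q}=\langle f\rangle_{r,Q}\langle g\rangle_{s,Q}$. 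Consequently \eqref{eq:forbidden-finite-sparse} would give
\begin{align*}
|\langle Hf,g\rangle|\le C\sum_{j=1}^J\Lambda_{\Sc_j,r,s}(f,g).
\end{align*}
Dyadic approximation boxes are comparable with genuine $\gamma$-cubes, so we may enlarge each box to a genuine cube. This costs only constants, and sparsity is kept with a worse $\delta$.

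\emph{Step 2: domination by one maximal sparse form.} For fixed $f,g$, any $J$ sparse forms are bounded by $J$ times the supremum over $\delta$-sparse families. So it suffices to disprove
\begin{align*}
|\langle Hf,g\rangle|\le C'\sup_{\Sc}\Lambda_{\Sc,r,s}(f,g).
\end{align*}

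\emph{Step 3: scale-invariant reduction.} For monomial curves, apply the dilation $\delta^\gamma_\lambda$ from \eqref{eq:monomial-dilation}. The operator $H$ commutes with these dilations, and the sparse forms scale in the same way. For local torsion curves, work at small scales $\ell$, where $\gamma$ in the Taylor frame is a perturbation of the moment curve $(t,t^2/2,\dots,t^d/d!)$ up to constants.

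\emph{Step 4: Knapp tests.} Take $f=\1_{F}$ and $g=\1_{G}$, where $F$ and $G$ are separated anisotropic sets placed so that $G\subseteq F+\gamma(I)$ for a parameter interval $I$ at distance $\sim1$ from $0$. On such an interval the kernel $1/t$ has constant sign, so there is no cancellation and $\langle Hf,g\rangle\gtrsim\int_I\int\1_F(x-\gamma(t))\1_G(x)\,dx\,dt$. I would use three configurations, one for each edge of $\partial\Oc_d$ not lying on the diagonal:
\begin{itemize}
\item a small ball $F$ of radius $\epsilon$ with $G$ an $\epsilon$-neighbourhood of an arc of $\gamma$;
\item the dual configuration;
\item a Christ-type example with $F$ and $G$ built from $\epsilon$-neighbourhoods of pieces of $\gamma$ in all $d$ Taylor directions, with side lengths $\epsilon,\epsilon^2,\dots,\epsilon^d$. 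This produces the vertices in \eqref{eq:trapezoid}.
\end{itemize}

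\emph{Step 5: bounding the sparse form.} Any sparse family must be controlled on the right-hand side. A cube $Q$ contributes only if it meets both $F$ and $G$, which forces $\ell(Q)\gtrsim1$ by the separation. Its contribution is then $|Q|\,(|F\cap Q|/|Q|)^{1/r}(|G\cap Q|/|Q|)^{1/s}$. This decreases in $|Q|$ because $1/r+1/s>1$; this is exactly where the hypothesis enters. Sparsity bounds the number of cubes at comparable scales meeting the configuration, so $\sup_\Sc\Lambda\lesssim|F|^{1/r}|G|^{1/s}$. The left side behaves like $\epsilon^{a}$ while the right side behaves like $\epsilon^{b}$. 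Outside $\overline{\Oc_d}$ one of the three tests gives $a<b$, and letting $\epsilon\to0$ yields the contradiction.

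\textbf{Main obstacle.} The hard step is Step~5 for local torsion curves: controlling all sparse cubes uniformly, including large cubes and the geometry of the error terms in the perturbed moment curve. I would try to show that the summation over scales $\ell(Q)\ge1$ forms a geometric series, which holds precisely because $1/r+1/s>1$.
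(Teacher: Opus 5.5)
Your architecture matches the paper's. You embed scalars via $\vecf=fe_1$, $\vecg=ge_1$, so that $\llangle fe_1\rrangle_{r,Q}\circ\llangle ge_1\rrangle_{s,Q}=\langle f\rangle_{r,Q}\langle g\rangle_{s,Q}$. You use three separated-support Knapp tests, one per non-diagonal edge of $\Oc_d$. On the sparse side, separation forces $\ell(Q)$ to be bounded below, sparsity leaves $O(1/\delta)$ contributing boxes per scale, and $\sum|Q|^{-\theta}$ is geometric because $\theta=1/r+1/s-1>0$.

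\textbf{The gap is in Step~4, on the operator side.} ``$1/t$ has constant sign on $I$'' controls only $t\in I$, and harmlessly the other $t>0$, where the integrand is nonnegative. For $t<0$ one has $1/t<0$ while $\langle f(\cdot-\gamma(t)),g\rangle\ge0$, so the negative branch can cancel the Knapp contribution. This is not a technicality: if all $\varepsilon_j=\varepsilon'_j$ the monomial curve is even and $H_\gamma\equiv0$. So the hypothesis $\varepsilon_j\ne\varepsilon'_j$ must enter, and your argument never uses it. The paper takes $L(z)=\varepsilon_jz_j$, so $L(\gamma(t))=t^{\alpha_j}>0$ for $t>0$ and $L(\gamma(t))<0$ for $t<0$. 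It builds the Knapp supports with $\inf_{z\in F-E}L(z)>0$. Then $\langle u_k(\cdot-\gamma(t)),v_k\rangle=0$ for all $t<0$, and $|\langle H_\gamma u_k,v_k\rangle|\ge B_k\to\infty$.

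For local curves no global separation exists, because $\gamma([-1,0])$ may revisit $\gamma(I)$ or the origin. For example, $\gamma(t)=\omega^{-1}(\sin\omega t,1-\cos\omega t)$ with $\omega=4\pi$ has constant curvature $\omega$. At unit scale the weights $1/t$ at the four preimages $\pm1/4,\pm3/4$ of a point sum to zero. ``Work at small scales'' is the right instinct, but it must be made quantitative, and this, not the sparse side, is the real difficulty. The paper's steps are:
\begin{enumerate}[(i)]
\item Take the Knapp tests $u_k,v_k$ for the limit curve $\Gamma_0$ with $L(z)=z_1$, and rescale: $f_{k,\lambda}=\lambda^{-A/r}u_k\circ\delta_\lambda^{-1}$, $g_{k,\lambda}=\lambda^{-A/s}v_k\circ\delta_\lambda^{-1}$.
\item The pairing becomes $\lambda^{-A\theta}$ times the $\Gamma_\lambda$-pairing, whose $I$-part $B_k(\lambda)$ tends to $B_k$ for fixed $k$.
\item $\gamma_1(t)<0$ on $(-\varepsilon_0,0)$ kills the nearby negative parameters.
\item On $[-1,-\varepsilon_0]$, a monotone coordinate gives $|\{t:\gamma(t)\in\delta_\lambda(F-E)\}|\lesssim\lambda$, hence an error of at most $C_k\lambda\,\lambda^{-A\theta}$.
\item Only after fixing $k$, choose $\lambda_k\to0$ with $B_k(\lambda_k)\ge B_k/2$ and $C_k\lambda_k\le B_k/4$.
\end{enumerate}
The sparse side, which you flagged as the main obstacle, is then routine. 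First-coordinate separation gives $\ell(Q)\gtrsim\lambda$, and the same counting bounds every $\delta$-sparse form by $\lambda^{-A\theta}$, matching the prefactor of the pairing.
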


\begin{remark}\label{rem:main-results-comparison}
The convex body bounds retain the scalar sharp exponent range up to its boundary. The lifting and matrix embedding use \cite{LaukkarinenLorist2026,KakaroumpasNguyenVardakis2026}. The endpoint-contact obstruction in \Propref{prop:endpoint-flat} occurs inside the scalar region for suitable curves, whereas \Corref{cor:sharpness} concerns exponents outside $\overline{\Oc_d}$ for uniformly nondegenerate curves. No optimality of the matrix-weight exponent is asserted.
\end{remark}

\subsection{Organization}\label{subsec:organization}
\Secref{sec:convex-proof} states the auxiliary results and proves the main theorems. \Secref{sec:auxiliary-estimates} supplies the auxiliary proofs, including the adapted metric construction.

\subsection{Notation}\label{subsec:notation}
\vspace{0.1em}\noindent
We use the following conventions throughout the paper.
\begin{itemize}
\item The vector-valued pairing is $\langle\vecf,\vecg\rangle\coloneqq\int_{\R^d}\langle\vecf(x),\vecg(x)\rangle_{\C^n}\,dx$.

\item The family $\Qc_\gamma$ denotes the curve-adapted cubes. The symbol $\Kc=\mathscr D_\star$ denotes the fixed shifted dyadic curve grid used for the lifting. It is the dyadic model of $\Qc_\gamma$ used in \cite[pp.~422--423]{CladekOu2018}. The asymmetric sparse family is a subfamily of $\Kc$. The dyadic boxes in $\Kc$ need not be genuine members of $\Qc_\gamma$. Sparse averages are defined on both collections, whereas the matrix characteristic is measured on the full basis $\Qc_\gamma$.

\item The scalar average, scalar sparse form, convex body average and convex body sparse form are defined in \eqref{eq:avg}, \eqref{eq:scalar-sparse}, \eqref{eq:convex-average} and \eqref{eq:convex-form}. The stopping generations and their convex body normalizations are defined in \Secref{subsec:adapted-lifting-proof}.

\item A matrix weight is a locally integrable function $W:\R^d\to\C^{n\times n}$ such that $W(x)$ is self-adjoint and positive definite for a.e. $x$. For $1<p<\infty$ we set
\begin{align}\label{eq:matrix-space}
        \|\vecf\|_{L^p(W)}
        \coloneqq \left(\int_{\R^d}|W(x)^{1/p}\vecf(x)|^p\,dx\right)^{1/p}.
\end{align}

\item For a matrix weight $U$ and $1<\alpha\le\beta<\infty$, define the two-exponent matrix characteristic over all curve-adapted boxes by
\begin{align}\label{eq:matrix-Aab}
[U]_{A_{\alpha,\beta}}
\coloneqq \sup_{Q\in\Qc_\gamma}\frac1{|Q|}\int_Q
\left(\frac1{|Q|}\int_Q\|U(x)^{1/\beta}U(y)^{-1/\beta}\|^{\alpha'}\,dy\right)^{\beta/\alpha'}dx .
\end{align}
When $1\le r<p<s'<\infty$, we use the matrix characteristic
\begin{align}\label{eq:matrix-characteristic}
        [W]_{p,r,s}
        \coloneqq \left[W^{\frac{s'}{s'-p}}\right]_{A_{\left(\frac{pr}{p-r}\right)',\frac{ps'}{s'-p}}}.
\end{align}
The finiteness hypothesis is understood in the sense of the matrix-weight class in \cite{KakaroumpasNguyenVardakis2026}, in particular the power $W^{s'/(s'-p)}$ is locally integrable.

\item We write $X\lesssim Y$ if $X\le CY$ for an admissible constant $C$, and $X\eqsim Y$ if both $X\lesssim Y$ and $Y\lesssim X$. Constants may depend on the curve, the dimension and the exponents, but not on the functions or the matrix weight.
\end{itemize}

\section{Proofs of the main results}\label{sec:convex-proof}

We first state the geometric tools used in both main theorems. Their proofs are collected in \Secref{sec:auxiliary-estimates}. The alternating shifts below provide the nested boxes and the exact central-third partition needed in the scalar stopping argument.

\begin{lemma}[Nested curve-adapted thirds]\label{lem:corrected-grid}
For monomials normalize $\alpha_1=1$, and for the local Taylor frame take $\alpha_j=j$. Set $\nu_j(k)\coloneqq\lfloor k\alpha_j\rfloor$. For $a\in\{0,1,2\}$ define
\begin{align}\label{eq:alternating-third-intervals}
I^{a}_{\ell,m}\coloneqq 2^\ell
\bigl[m+(-1)^\ell a/3,\,m+1+(-1)^\ell a/3\bigr),
\qquad \ell,m\in\Z.
\end{align}
For $\mathbf a=(a_1,\ldots,a_d)\in\{0,1,2\}^d$ define
\begin{align}\label{eq:alternating-adapted-grids}
\mathscr D_{\mathbf a}(k)
\coloneqq\left\{\prod_{j=1}^d I^{a_j}_{\nu_j(k),m_j}:
(m_1,\ldots,m_d)\in\Z^d\right\},\qquad
\mathscr D_{\mathbf a}\coloneqq\bigcup_{k\in\Z}\mathscr D_{\mathbf a}(k).
\end{align}
The construction has four properties. First, each $\mathscr D_{\mathbf a}(k)$ partitions $\R^d$, and in a fixed grid
\begin{align*}
Q\cap P\ne\varnothing&\Longrightarrow Q\subseteq P\ \text{or}\ P\subseteq Q,\\
1\le\frac{|Q^{(1)}|}{|Q|}&\le 2^{\sum_{j=1}^d\lceil\alpha_j\rceil},
\qquad Q\in\mathscr D_{\mathbf a}(k).
\end{align*}
Second, the boxes containing almost every $x$ shrink to $x$ as $k\to-\infty$. Third, the central-third covering is exact:
\begin{align*}
\sum_{\mathbf a\in\{0,1,2\}^d}\ \sum_{Q\in\mathscr D_{\mathbf a}(k)}
\mathbf1_{\frac13Q}(x)=1\quad\text{for a.e. }x\in\R^d.
\end{align*}
Finally, for $Q\in\mathscr D_{\mathbf a}(k)$,
\begin{align*}
2^{-1}2^{k\alpha_j}<\operatorname{side}_j(Q)\le2^{k\alpha_j},\qquad 1\le j\le d.
\end{align*}
Thus every such box is comparable with a genuine $\gamma$-cube.
\end{lemma}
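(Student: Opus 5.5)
The plan is to reduce every claim to a one-dimensional statement about the intervals $I^{a}_{\ell,m}$, and then pass to products. The only real content is that the alternating shift $(-1)^\ell a/3$ makes the shifted dyadic intervals in one coordinate nested across consecutive levels. Everything else is bookkeeping with $\nu_j(k)=\lfloor k\alpha_j\rfloor$.

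\emph{Step 1: one-dimensional nesting.} Fix $a\in\{0,1,2\}$ and write $\varepsilon_\ell=(-1)^\ell$. The left endpoints at level $\ell$ form the set $2^\ell(\Z+\varepsilon_\ell a/3)$. The left endpoints at level $\ell+1$ are
\begin{align*}
2^{\ell+1}(m-\varepsilon_\ell a/3)=2^\ell\bigl(2m-\varepsilon_\ell a-\varepsilon_\ell a/3+\varepsilon_\ell a/3\bigr)-2^\ell\cdot 2\varepsilon_\ell a/3 .
\end{align*}
After simplification this equals $2^\ell\bigl((2m-\varepsilon_\ell a)+\varepsilon_\ell a/3\bigr)$, because $-2\varepsilon_\ell a/3=-\varepsilon_\ell a+\varepsilon_\ell a/3$. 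Since $2m-\varepsilon_\ell a\in\Z$, every level-$(\ell+1)$ endpoint is a level-$\ell$ endpoint. Hence each level-$(\ell+1)$ interval is the union of exactly two level-$\ell$ intervals. By induction, for $\ell\le\ell'$ each level-$\ell$ interval lies in a unique level-$\ell'$ interval, and two intervals of the same shift $a$ are either nested or disjoint. This is exactly where a fixed shift would fail: with a fixed shift the offset $2a/3-a/3$ is not an integer. I expect this computation to be the main (if short) obstacle.

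\emph{Step 2: the product grid.} Each map $k\mapsto\nu_j(k)$ is nondecreasing, and its increments satisfy $\nu_j(k+1)-\nu_j(k)\le\lceil\alpha_j\rceil$; with $\alpha_1=1$ we have $\nu_1(k)=k$. Each $\mathscr D_{\mathbf a}(k)$ is a product of one-dimensional partitions, so it partitions $\R^d$. Take $Q\in\mathscr D_{\mathbf a}(k)$ and $P\in\mathscr D_{\mathbf a}(k')$ with $k\le k'$. Coordinatewise, $\nu_j(k)\le\nu_j(k')$, so by Step 1 the $j$th factors are nested or disjoint; when $\nu_j(k)=\nu_j(k')$ they are equal or disjoint. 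Consequently $Q\cap P\ne\varnothing$ forces $Q\subseteq P$. Let $Q^{(1)}$ denote the box of $\mathscr D_{\mathbf a}(k+1)$ containing $Q$. Its volume ratio is $\prod_j 2^{\nu_j(k+1)-\nu_j(k)}$, which lies in $[1,2^{\sum_j\lceil\alpha_j\rceil}]$.

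\emph{Step 3: size and shrinking.} The side lengths are $\operatorname{side}_j(Q)=2^{\lfloor k\alpha_j\rfloor}$, and $2^{k\alpha_j-1}<2^{\lfloor k\alpha_j\rfloor}\le2^{k\alpha_j}$. This gives the last property and comparability with the genuine $\gamma$-cube $\delta^\gamma_{2^k}([0,1]^d)$ translated to $Q$. As $k\to-\infty$ all sides tend to $0$, and the box of $\mathscr D_{\mathbf a}(k)$ containing $x$ always contains $x$. Hence these boxes shrink to $x$, in fact for every $x$.

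\emph{Step 4: exact central thirds.} Here $\frac13Q$ is the concentric box with one third of each side, so $\mathbf 1_{\frac13Q}$ factors as a product of one-dimensional indicators. The sum over $\mathbf a$ and $Q$ therefore factors into $\prod_{j=1}^d\sum_{a\in\{0,1,2\}}\sum_{m\in\Z}\mathbf 1_{\frac13 I^a_{\ell_j,m}}(x_j)$ with $\ell_j=\nu_j(k)$. In one dimension, $\frac13I^a_{\ell,m}=2^\ell\bigl[m+\tfrac13+\varepsilon_\ell\tfrac a3,\,m+\tfrac23+\varepsilon_\ell\tfrac a3\bigr)$. As $a$ ranges over $\{0,1,2\}$, the offsets $\tfrac13+\varepsilon_\ell a/3$ run through all three residues of $\tfrac13\Z$ modulo $\Z$, for either sign of $\varepsilon_\ell$. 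So these intervals are exactly the tiling of $\R$ by the intervals $2^\ell[\tfrac i3,\tfrac{i+1}3)$, $i\in\Z$. Each factor is therefore identically $1$, and the product is $1$; with half-open intervals this holds for every $x$, not only almost everywhere.
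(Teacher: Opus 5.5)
Your proof is correct and follows the paper's own argument. Both use the endpoint identity $2^{\ell+1}\bigl(m-(-1)^\ell a/3\bigr)=2^\ell\bigl((2m-(-1)^\ell a)+(-1)^\ell a/3\bigr)$ for one-dimensional nesting, then pass to products, use the floor bounds on $\nu_j(k)$ for the side-lengths and parent ratios, and note that the three shifted central thirds tile $\R$ at every level. One cosmetic slip: the middle expression in your Step 1 display is not equal to the left side, since it carries an extra $-2^\ell\varepsilon_\ell a$. Delete it; your stated justification $-2\varepsilon_\ell a/3=-\varepsilon_\ell a+\varepsilon_\ell a/3$ already gives the correct identity directly from $2^\ell(2m-2\varepsilon_\ell a/3)$.
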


For the lifting we fix $\mathscr D_\star\coloneqq\mathscr D_{(1,\ldots,1)}$. Its sufficiently large boxes contain any prescribed compact set. For $Q\in\mathscr D_\star(k)$ let $Q^+$ be the genuine adapted box with center $c_Q$ and coordinate side-lengths $(A_\gamma2^k)^{\alpha_i}$, where $A_\gamma\ge8$ is fixed. We shall use
\begin{align*}
Q\subseteq Q^+,\qquad |Q^+|\eqsim_\gamma|Q|,\qquad
Q\subseteq P\Longrightarrow Q^+\subseteq P^+.
\end{align*}
These inclusions are established at the start of the proof of \Propref{prop:adapted-lifting}. Only the scalar sparse bounds of \Thmref{thm:A} and \Thmref{thm:B} are required.

The next proposition turns scalar sparse domination into one convex body sparse form. We adapt \cite[Theorem~3.4]{LaukkarinenLorist2026} through an asymmetric dyadic bound, followed by enlargement to genuine adapted boxes.

\begin{proposition}[Adapted scalar-to-convex-body lifting]\label{prop:adapted-lifting}
Let $\Lambda$ be a continuous scalar sesquilinear form on $L^p(\R^d)\times L^{p'}(\R^d)$, where
\begin{align*}
1\le r,s<\infty,\qquad r<p<s',\qquad
\theta\coloneqq\frac1r+\frac1s-1>0.
\end{align*}
For vector test functions we use the coordinatewise extension
\begin{align*}
\Lambda(\vecf,\vecg)\coloneqq\sum_{j=1}^n\Lambda(f_j,g_j).
\end{align*}
Assume that every scalar test pair satisfies a uniform $(r,s)$-sparse bound over genuine adapted boxes. We first obtain an asymmetric bound on the fixed grid. For every bounded compactly supported pair $\vecf,\vecg:\R^d\to\C^n$, there is an $\eta_\gamma$-sparse family $\Sc_\star\subseteq\mathscr D_\star$ such that
\begin{align}\label{eq:adapted-asymmetric-lifting}
|\Lambda(\vecf,\vecg)|
&\le C_{\gamma,r,s,n}\sum_{Q\in\Sc_{\star}}|Q|\,
\llangle\vecf\rrangle_{r,Q^+}\circ\llangle\vecg\rrangle_{s,Q}.
\end{align}
Enlarging the selected boxes then produces a single $\delta_\gamma$-sparse family $\Sc\subseteq\Qc_\gamma$ of genuine adapted boxes satisfying
\begin{align}\label{eq:adapted-one-genuine}
|\Lambda(\vecf,\vecg)|\le C_{\gamma,r,s,n}
\sum_{R\in\Sc}|R|\,
\llangle\vecf\rrangle_{r,R}\circ\llangle\vecg\rrangle_{s,R}.
\end{align}
The constants depend on the geometry and scalar sparse bound. The dyadic family in \eqref{eq:adapted-asymmetric-lifting} is used for matrix embedding.
\end{proposition}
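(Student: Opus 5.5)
We would prove \Propref{prop:adapted-lifting} in three stages: geometric preliminaries, the asymmetric dyadic lifting \eqref{eq:adapted-asymmetric-lifting}, and enlargement to \eqref{eq:adapted-one-genuine}.

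\emph{Geometric preliminaries.} For $Q\in\mathscr D_\star(k)$, \Lemref{lem:corrected-grid} gives $\operatorname{side}_j(Q)\in(2^{k\alpha_j-1},2^{k\alpha_j}]$. Since $A_\gamma\ge8$ and $\alpha_j\ge1$, each side of $Q^+$ satisfies $(A_\gamma2^k)^{\alpha_j}\ge 8\cdot2^{k\alpha_j}$, so $Q\subseteq Q^+$ and $|Q^+|\le A_\gamma^{\sum\alpha_j}2^{d}|Q|$. For the nesting, let $Q\subseteq P$ with $P\in\mathscr D_\star(k')$ and $k'\ge k$. Then $|c_Q-c_P|_j\le \tfrac12\operatorname{side}_j(P)\le\tfrac12 2^{k'\alpha_j}$. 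The half-side of $P^+$ exceeds that of $Q^+$ by
\[
\tfrac12\bigl(A_\gamma^{\alpha_j}(2^{k'\alpha_j}-2^{k\alpha_j})\bigr).
\]
When $k'>k$ this dominates $\tfrac12 2^{k'\alpha_j}$, because $A_\gamma^{\alpha_j}(1-2^{-\alpha_j})\ge 4$. The case $k'=k$ forces $Q=P$. Hence $Q^+\subseteq P^+$.

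\emph{Asymmetric dyadic lifting.} We follow \cite[Theorem~3.4]{LaukkarinenLorist2026}. First we convert the scalar hypothesis (sparse over genuine boxes) into a dyadic statement: each genuine adapted box $R$ is contained in $Q^+$ for some $Q\in\mathscr D_\star$ of comparable measure, using the three-lattice/central-third property of \Lemref{lem:corrected-grid} with $\mathscr D_\star$ large enough. This yields scalar bounds of the form $\sum|Q|\langle f\rangle_{r,Q^+}\langle g\rangle_{s,Q}$. Since $f$ is enlarged and $g$ is not, this step is where the asymmetry comes from.

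Then we run the stopping-time construction. Fix a large $Q_0\in\mathscr D_\star$ containing both supports. Normalize by the John ellipsoids of $\llangle\vecf\rrangle_{r,Q_0^+}$ and $\llangle\vecg\rrangle_{s,Q_0}$: apply a linear map $M$ so that $M^{-*}\llangle\vecg\rrangle_{s,Q_0}$ and $M\llangle\vecf\rrangle_{r,Q_0^+}$ are comparable to balls up to $\sqrt n$ and the product of their radii is $\eqsim\llangle\vecf\rrangle\circ\llangle\vecg\rrangle$. Define the stopping children as the maximal dyadic $Q\subseteq Q_0$ on which $\langle |M\vecf|\rangle_{r,Q^+}$ or $\langle|M^{-*}\vecg|\rangle_{s,Q}$ exceeds $C$ times its top value.

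The weak-type $(r,r)$ and $(s,s)$ bounds for the maximal operators over $\{Q^+\}$ make these children occupy at most half of $Q_0$. This uses the nesting $Q^+\subseteq P^+$, which keeps the maximal stopping family disjoint, and $|Q^+|\eqsim|Q|$.

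On the part of $Q_0$ outside the stopping cubes we apply the scalar bound to the coordinates of $M\vecf$ and $M^{-*}\vecg$. The form is invariant, since $\sum_j\Lambda(f_j,g_j)$ is unchanged under $(M,M^{-*})$. We localize using the Carleson-type transfer: sums over sparse cubes below the stopping level are controlled by the top averages times $|Q_0|$. This needs $\theta>0$ and $r<p<s'$, exactly as in the Laukkarinen--Lorist scheme, where the localization of $\Lambda$ to $Q_0\setminus\bigcup$ children uses continuity on $L^p\times L^{p'}$. Iterating gives the sparse family $\Sc_\star$. The main obstacle is this localization step: the scalar hypothesis is global, so we must carry the tail terms through the anisotropic geometry, and we would rely on the abstract theorem being applicable once the basis $\{Q^+\}$ satisfies the nesting and doubling properties.

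\emph{Enlargement.} Set $R=Q^+$ for $Q\in\Sc_\star$. Since $\llangle\vecg\rrangle_{s,Q}\subseteq (|Q^+|/|Q|)^{1/s}\llangle\vecg\rrangle_{s,Q^+}$, we have
\[
|Q|\,\llangle\vecf\rrangle_{r,Q^+}\circ\llangle\vecg\rrangle_{s,Q}\lesssim |R|\,\llangle\vecf\rrangle_{r,R}\circ\llangle\vecg\rrangle_{s,R}.
\]
The sets $E_Q\subseteq Q\subseteq R$ remain disjoint, with $|E_Q|\ge\eta|Q|\gtrsim\eta|R|$, so the family is $\delta_\gamma$-sparse. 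Distinct $Q$ give distinct $Q^+$ (distinct centers or scales), so no multiplicity issue arises.
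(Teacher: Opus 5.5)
The skeleton you propose is the paper's: reduce to an asymmetric scalar bound on $\mathscr D_\star$, normalize, stop, iterate, enlarge. Your geometric preliminaries and the enlargement step are correct and match it. The gap is the localization. You name it as the main obstacle and then defer it to ``the abstract theorem,'' which does not apply off the shelf. After the grid reduction, your scalar hypothesis is the asymmetric form $\sum|Q|\langle f\rangle_{r,Q^+}\langle g\rangle_{s,Q}$, and its sparse family changes with the tested pair.

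Continuity on $L^p\times L^{p'}$ does not localize an abstract form either. In the paper it only justifies passing to countable stopping families and kills the final remainder $\sum_{Q\in\mathcal G_h}|\Lambda(\vecf\mathbf 1_{Q^+},\vecg\mathbf 1_Q)|\lesssim2^{-h}|P_0|$. Applying the hypothesis separately to the cross terms $\Lambda(f\mathbf 1_{P^+\setminus R^+},g\mathbf 1_R)$ gives one sparse family per stopping box, and their sum over $R$ is not controlled.

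The missing idea is the randomization identity
\[
\Lambda(f\mathbf 1_{P^+},g\mathbf 1_P)-\sum_{R\in\mathcal F_P}\Lambda(f\mathbf 1_{R^+},g\mathbf 1_R)=\mathbb E\,\Lambda(m_\varepsilon f,n_\varepsilon g),
\]
with $m_\varepsilon(x)=\mathbf 1_{P^+}(x)\prod_{R\in\mathcal F_P:\,x\in R^+}2\varepsilon_R$ and $n_\varepsilon=\mathbf 1_{G_P}+2\sum_R(1-\varepsilon_R)\mathbf 1_R$. For each fixed $\varepsilon$ this is one scalar pair with one sparse family. Since $Q\subseteq R$ implies $Q^+\subseteq R^+$, every box inside a stopping box is annihilated: $m_\varepsilon\mathbf 1_{Q^+}=0$ if $\varepsilon_R=0$, and $n_\varepsilon\mathbf 1_Q=0$ if $\varepsilon_R=1$. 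The remaining boxes in $P$ are non-stopping and are handled by Carleson packing. The ancestors of $P$ form a geometric tail, using $\theta>0$ and $|P^{(h)}|\ge2^h|P|$.

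This mechanism needs $m_\varepsilon$ to be uniformly bounded, that is, $\sum_{R\in\mathcal F_P}\mathbf 1_{R^+}\le C_\gamma$. Your average-based stopping rule does not give this. Disjoint stopping cubes of geometrically decreasing size can accumulate at a grid corner, one per Whitney layer, each with $g$-average above threshold while every parent averages over the mostly empty inner region and stays below it. Then unboundedly many $R^+$ contain points near that corner. The paper avoids this by stopping at maximal $R\subseteq P$ with $R^{++}\subseteq E_P$, where $E_P$ is a level set of $M^\gamma_r,M^\gamma_s$ and $R^{++}$ uses $B_\gamma=5A_\gamma$. If $R^+$ and $(R')^+$ overlapped with generation gap at least $2$, then $(R^{(1)})^{++}\subseteq(R')^{++}\subseteq E_P$, contradicting maximality.

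Two smaller points. In the grid reduction, $R\subseteq Q^+$ controls only the $f$-average. To get $\langle g\rangle_{s,R}\lesssim\langle g\rangle_{s,Q}$ you must cover $R$ by the boundedly many same-generation boxes meeting it and pick the one maximizing $\langle g\rangle_{s,Q}$. You then need to check the multiplicity and packing of $\{Q_R\}$ via $R\subseteq Q_R^+\subseteq P^+$. The central-third property plays no role here, since it produces $3^d$ grids rather than one. Also, a single map $M$ cannot make both $M\llangle\vecf\rrangle$ and $M^{-*}\llangle\vecg\rrangle$ comparable to balls. Normalizing one body, by John or Auerbach, already yields $\sum_j\langle f_j\rangle\langle g_j\rangle\lesssim_n\llangle\vecf\rrangle\circ\llangle\vecg\rrangle$.
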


For nonzero planar endpoint curvature the contact order is $2$, and the resulting threshold $\theta>1/3$ lies outside $\inte(\Oc_2)$. Higher contact order moves the obstruction inside this region, as follows.

\begin{proposition}[Endpoint contact and sparse obstruction]\label{prop:endpoint-flat}
Let $\beta>1$ and $\gamma\in C^2([-1,1],\R^2)$. Assume
\begin{align*}
\gamma(0)&=0, & \det(\gamma'(0),\gamma''(0))&\ne0,\\
\gamma'(1)&\ne0, & \operatorname{dist}\bigl(\gamma(1),\gamma([-1,0])\bigr)&>0.
\end{align*}
Let $|\nu|=1$ and $\nu\perp\gamma'(1)$. The endpoint has contact order at least $\beta$ if, for some $C_\gamma,u_0>0$,
\begin{align}\label{eq:endpoint-contact-condition-r24}
\bigl|\langle\gamma(1-u)-\gamma(1),\nu\rangle\bigr|\le C_\gamma u^\beta,
\qquad 0<u<u_0.
\end{align}
Fix $1<r,s<\infty$ with
\begin{align}\label{eq:endpoint-threshold-r37}
\theta\coloneqq\frac1r+\frac1s-1>\frac1{\beta+1}.
\end{align}
Then the full-interval $H_{\gamma,\mathrm{loc}}$ has no uniform $(r,s)$-sparse estimate over genuine adapted boxes, even with any prescribed finite number of sparse families. More precisely, for each $J\in\mathbb N$, $0<\eta<1$ and $C>0$, some bounded compactly supported $f,g$ satisfy
\begin{align}\label{eq:endpoint-no-finite-sparse-r37}
|\langle H_{\gamma,\mathrm{loc}}f,g\rangle|
>C\sum_{j=1}^J\Lambda_{\Sc_j,r,s}(f,g)
\end{align}
for every choice of $\eta$-sparse families $\Sc_1,\ldots,\Sc_J\subseteq\Qc_\gamma$. The terminal operator satisfies
\begin{align}\label{eq:endpoint-terminal-unbounded-r37}
T_{\gamma,a}f(x)\coloneqq\int_a^1f(x-\gamma(t))\,\frac{dt}{t},\qquad
\|T_{\gamma,a}\|_{L^r(\R^2)\to L^{s'}(\R^2)}=\infty
\quad(0<a<1).
\end{align}

For every integer $m\ge3$, the polynomial curve
\begin{align}\label{eq:endpoint-polynomial-curve}
\gamma_m(t)\coloneqq\left(t,\frac{(1-t)^m-1}{m(m-1)}\right),\qquad -1\le t\le1,
\end{align}
satisfies $\gamma_m(0)=0$ and
\begin{align}\label{eq:endpoint-flat-torsion}
\det(\gamma_m'(t),\gamma_m''(t))=(1-t)^{m-2}>0\quad(-1<t<1),\qquad
\det(\gamma_m'(1),\gamma_m''(1))=0.
\end{align}
For $\gamma_m$, the obstruction threshold and a specific interior example are
\begin{align*}
\theta&>\frac1{m+1},\\
m=3,\qquad r=s&=\frac{20}{13},\qquad
\theta=\frac3{10}>\frac14,\qquad
\Bigl(\frac1r,\frac1{s'}\Bigr)=\Bigl(\frac{13}{20},\frac7{20}\Bigr)\in\inte(\Oc_2).
\end{align*}
In fact, for every $(1/r,1/s')\in\inte(\Oc_2)$ a sufficiently large $m$ gives such a counterexample with torsion positive on $(-1,1)$. An infinitely flat $C^\infty$ example yields the same failure for every $\theta>0$.
\end{proposition}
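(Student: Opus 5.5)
The plan is a Knapp-type test localized at the terminal endpoint $t=1$. The principal value singularity sits at $t=0$. Near $t=1$ the kernel $dt/t$ is comparable to $1$ and has no cancellation, so the whole obstruction reduces to an $L^r\to L^{s'}$ failure of the terminal piece, caused by the flatness \eqref{eq:endpoint-contact-condition-r24}.

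\emph{Test functions.} Fix a small $u\in(0,u_0)$. Let $\tau=\gamma'(1)/|\gamma'(1)|$ and let $\nu$ be as in the statement. Let $R_u$ be the rectangle centred at $0$ with side $u$ in direction $\tau$ and side $u^\beta$ in direction $\nu$. Fix a point $x_0$ and set
\begin{align*}
g=\mathbf 1_{x_0+R_u},\qquad f=\mathbf 1_{F_u},\qquad F_u\coloneqq x_0-\gamma(1)+C_1R_u ,
\end{align*}
with $C_1$ a large dilation constant. Since $\gamma\in C^2$, over $t\in[1-u,1]$ the tangential displacement $\gamma(t)-\gamma(1)$ has size $\lesssim u$. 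By \eqref{eq:endpoint-contact-condition-r24} its normal displacement is $\le C_\gamma u^\beta$. Hence for $x\in x_0+R_u$ and $t\in[1-u,1]$ we have $x-\gamma(t)\in F_u$ once $C_1$ is large.

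The hypothesis $\operatorname{dist}(\gamma(1),\gamma([-1,0]))>0$ gives a small $a>0$ such that $\gamma(1)$ is also at positive distance from $\gamma([-1,a])$. Then for $u$ small, $x-\gamma(t)\notin F_u$ whenever $x\in x_0+R_u$ and $t\in[-1,a]$. Consequently, on $\operatorname{supp}g$,
\begin{align*}
H_{\gamma,\mathrm{loc}}f(x)=\int_a^1 f(x-\gamma(t))\,\frac{dt}{t}\ \ge\ \int_{1-u}^1\frac{dt}{t}\ \ge\ u .
\end{align*}
Here $f\ge0$ and $t>0$ on $[a,1]$, so no cancellation can occur. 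This yields
\begin{align*}
\langle H_{\gamma,\mathrm{loc}}f,g\rangle\gtrsim u\cdot u^{1+\beta},
\qquad \|f\|_r\|g\|_s\eqsim u^{(1+\beta)(1/r+1/s)}.
\end{align*}
The ratio of these two quantities is $u^{1-\theta(1+\beta)}$, which tends to $\infty$ as $u\to0$ exactly when $\theta>1/(\beta+1)$. The same pair tests $T_{\gamma,a}$, since $T_{\gamma,a}f=H_{\gamma,\mathrm{loc}}f$ on $\operatorname{supp}g$. This proves \eqref{eq:endpoint-terminal-unbounded-r37}.

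\emph{Sparse side.} We must bound $\sum_{j}\Lambda_{\Sc_j,r,s}(f,g)$ uniformly over all $\eta$-sparse families. Only boxes $Q\in\Qc_\gamma$ meeting both $\operatorname{supp}f$ and $\operatorname{supp}g$ contribute. These supports are separated by a distance $\approx|\gamma(1)|>0$, independent of $u$. Hence such $Q$ have $\ell(Q)\gtrsim1$ and $|Q|\gtrsim1$, and each term satisfies
\begin{align*}
|Q|\langle f\rangle_{r,Q}\langle g\rangle_{s,Q}\le|Q|^{-\theta}|F_u|^{1/r}|R_u|^{1/s}.
\end{align*}
Next fix a dyadic band of scales. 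All contributing boxes in that band lie in a bounded multiple of one box of that size, because they meet the fixed bounded set $\operatorname{supp}g$. The disjointness of the sets $E_Q$ therefore bounds their number by $C/\eta$. Summing $|Q|^{-\theta}$ over bands gives a convergent geometric series, since $\theta>0$. Thus
\begin{align*}
\sum_j\Lambda_{\Sc_j,r,s}(f,g)\le C_{J,\eta,\gamma}\,u^{(1+\beta)(1/r+1/s)},
\end{align*}
and comparing with the lower bound gives \eqref{eq:endpoint-no-finite-sparse-r37} for $u$ small.

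The main point needing care is this counting step. Adapted boxes of large scale are eccentric, so I would count in volume bands $|Q|\in[2^k,2^{k+1})$ and use that $\ell\mapsto|\delta_\ell|$ is monotone.

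\emph{The examples $\gamma_m$.} We have
\begin{align*}
\gamma_m'=\Bigl(1,-\frac{(1-t)^{m-1}}{m-1}\Bigr),\qquad \gamma_m''=\bigl(0,(1-t)^{m-2}\bigr),
\end{align*}
so \eqref{eq:endpoint-flat-torsion} holds and the determinant equals $1$ at $t=0$. Also $\gamma_m'(1)=(1,0)$, so we may take $\nu=(0,1)$. Then
\begin{align*}
\langle\gamma_m(1-u)-\gamma_m(1),\nu\rangle=\frac{u^m}{m(m-1)},
\end{align*}
so $\beta=m$. Finally $\gamma_m([-1,0])$ has first coordinate $\le0$, while $\gamma_m(1)$ has first coordinate $1$, which gives the separation hypothesis.

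For $d=2$ the region $\Oc_2$ is the triangle with vertices $(0,0)$, $(1,1)$, $(2/3,1/3)$. The point $(13/20,7/20)$ lies strictly inside it, since $x/2<y$, $2x-1<y$ and $y<x$ there. For this point $\theta=1/r-1/s'=3/10>1/4$. More generally, every interior point has $\theta=x-y>0$, so it suffices to choose $m$ with $1/(m+1)<\theta$. For an infinitely flat $C^\infty$ endpoint, \eqref{eq:endpoint-contact-condition-r24} holds for every $\beta$, so the argument applies for every $\theta>0$.
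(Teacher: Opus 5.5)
Your proposal is correct and is essentially the paper's proof. It uses the same Knapp test on a $u\times u^\beta$ rectangle aligned with $\gamma'(1)$ at the terminal endpoint; you enlarge the support of $f$ by $C_1$, whereas the paper keeps the two rectangles congruent and shrinks the parameter window to $[1-c_\gamma\delta,1]$ to get a $\tfrac38$ overlap. It also uses positivity of the integrand for $t>0$ with the separation hypothesis to kill the negative branch, and the same band-by-band count of contributing sparse boxes via disjoint witnesses.

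There are two small gaps. For $\|T_{\gamma,a}\|_{L^r\to L^{s'}}=\infty$ with \emph{every} $a\in(0,1)$, argue by positivity with $u<1-a$ rather than through the identity $T_{\gamma,a}f=H_{\gamma,\mathrm{loc}}f$ on $\operatorname{supp}g$, which you established only for your particular small $a$. The $C^\infty$ assertion also needs an explicit curve, for instance the paper's $(t,\phi(t)-\phi(0))$ with $\phi''(t)=e^{-1/(1-t)^2}$, whose transverse displacement $\phi(1-u)$ is $O(u^\beta)$ for every $\beta$.
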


The sharpness proof uses the separated-support test below, proved in \Secref{sec:auxiliary-estimates}.
\begin{lemma}[Separated-support Knapp tests]\label{lem:separated-knapp-r35}
Let $d\ge2$, $\eta\in C^d([1/2,1],\R^d)$, and suppose, for some $t_0\in(1/2,1)$ and nonzero real linear functional $L$, that
\begin{align*}
\det\bigl(\eta'(t_0),\ldots,\eta^{(d)}(t_0)\bigr)\ne0,
\qquad L(\eta(t_0))>0.
\end{align*}
Fix $1<r,s<\infty$ with $1/r+1/s>1$ and $(1/r,1/s')\notin\overline{\Oc_d}$. Then some fixed $I\Subset(1/2,1)$ and bounded sets $E,F\subseteq\R^d$ satisfy
\begin{align*}
\operatorname{dist}(E,F)>0,\qquad
\inf_{z\in F-E}L(z)>0.
\end{align*}
There exist nonnegative bounded $u_k,v_k$ supported in $E,F$ respectively, such that
\begin{align}\label{eq:knapp-unnormalized-r35}
\|u_k\|_{L^r}=\|v_k\|_{L^s}=1,\qquad
\int_I\int_{\R^d}u_k(z-\eta(t))v_k(z)\,dz\,\frac{dt}{t}\longrightarrow\infty.
\end{align}
\end{lemma}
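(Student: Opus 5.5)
The plan is to reduce everything to three classical Knapp-type tests for the averaging operator along the arc $\eta(I)$, with $I$ a short interval around $t_0$. Since $t\in I\Subset(1/2,1)$, the weight $1/t$ lies between $1$ and $2$ and plays no role. First I fix the geometry. By continuity, the torsion determinant stays nonzero on some $[t_0-\rho,t_0+\rho]\subset(1/2,1)$ and $L(\eta(t))\ge c>0$ there. Take $I=[t_0-\rho,t_0+\rho]$ and $E=B(0,\epsilon)$ with $\epsilon\ll c/\|L\|$. Let $F$ be the $\epsilon$-neighbourhood of $E+\eta(I)$, so that $F\subseteq B(0,2\epsilon)+\eta(I)$. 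Every $z\in F-E$ satisfies $z\in\eta(t)+B(0,3\epsilon)$ for some $t\in I$, so $L(z)\ge c-3\epsilon\|L\|>0$. In particular $0\notin\overline{F-E}$, hence $\operatorname{dist}(E,F)>0$. All test functions below are indicators of sets inside $E$ and $F$ for small parameters $\delta$, which we then normalize. Since the form is nonnegative, it suffices to show that
\[
\frac{\int_I\int u(z-\eta(t))v(z)\,dz\,dt}{\|u\|_{L^r}\|v\|_{L^s}}\to\infty .
\]

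Next come the three tests, with $\delta\to0$. Throughout I work in the Frenet-type frame at $t_0$, obtained by Gram--Schmidt applied to $\eta'(t_0),\ldots,\eta^{(d)}(t_0)$.

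Test 1 (point versus tube): take $u=\1_{B(0,\delta)}$ and $v=\1_{B(0,2\delta)+\eta(I)}$. The form is $\gtrsim\delta^d$, while $\|u\|_r\eqsim\delta^{d/r}$ and $\|v\|_s\eqsim\delta^{(d-1)/s}$. The ratio diverges iff $d/r+(d-1)/s>d$.

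Test 2 (tube versus point): the dual test, with $u$ the tube $B(0,\delta)-\eta(I)$ suitably translated into $E$, and $v$ a $\delta$-ball in $F$. The ratio diverges iff $(d-1)/r+d/s>d$.

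Test 3 (adapted box): take $u=\1_{R_\delta}$, where $R_\delta$ is the box with sides $\delta,\delta^2,\ldots,\delta^d$ in the Frenet frame, centred in $E$. Take $v=\1_{R'_\delta}$, where $R'_\delta$ is a fixed dilate of $R_\delta$ centred at the corresponding translate by $\eta(t_0)$. By Taylor's theorem and the nondegeneracy, for $|t-t_0|\le\delta$ the translates $R_\delta+\eta(t)$ stay in $R'_\delta$. So the form is $\gtrsim\delta|R_\delta|$. Here $|R_\delta|=\delta^{d(d+1)/2}$, and the norms are $|R_\delta|^{1/r}$ and $|R_\delta|^{1/s}$. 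The ratio $\eqsim\delta^{1-\theta d(d+1)/2}$ diverges iff $\theta>2/(d(d+1))$.

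Finally, I identify these three strict inequalities, written in the variables $(x,y)=(1/r,1/s')$ with $1/s=1-y$, with the complements of the three nondiagonal edges of $\Oc_d$. Test 1 becomes $dx-(d-1)y>1$. This line passes through $(1,1)$ and $P_d$: for the latter, $d\frac{d^2-d+2}{d^2+d}-\frac{(d-1)^2}{d+1}=1$, which checks. Test 2 becomes $(d-1)x-dy>0$; this is the edge through $(0,0)$ and $Q_d$, since $(d-1)\cdot 2/(d+1)=d\cdot(2d-2)/(d^2+d)$. Test 3 becomes $x-y>2/(d^2+d)$, the edge $Q_dP_d$. The hypothesis $1/r+1/s>1$ means $x>y$, so the point lies strictly on the lower side of the diagonal. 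Because $\Oc_d$ is the intersection of the four closed half-planes, a point with $x>y$ lying outside $\overline{\Oc_d}$ violates one of the three remaining inequalities strictly. The corresponding test then gives divergence, and choosing $\delta_k\to0$ produces the normalized pair $u_k,v_k$.

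The main obstacle is Test 3. There one must check that, uniformly in small $\delta$, the curved translates $R_\delta+\eta(t)$ with $|t-t_0|\le\delta$ all lie inside a fixed dilate of one anisotropic box. This requires the Taylor expansion of $\eta$ in the Frenet frame, namely that the $j$th coordinate of $\eta(t)-\eta(t_0)-\sum_{i<j}(\cdots)$ is $O(\delta^j)$. Also, only $C^d$ regularity is available, so the error terms are $o(\delta^d)$ rather than $O(\delta^{d+1})$. I would absorb this by using the box dilate centred at $\eta(t_0)$ and a compactness and continuity argument on the modulus of continuity of $\eta^{(d)}$.
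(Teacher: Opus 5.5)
Your proposal is correct and is essentially the paper's proof: it uses the same three one-scale tests, matched to the three non-diagonal edges of $\Omega_d$ via $x>y$. Your Tests~1 and~2 are the paper's second and first boundary tests, and Test~3 is its anisotropic test, with your orthonormal Gram--Schmidt box replacing the parallelepiped spanned by $\eta^{(j)}(t_0)/j!$. Two small remarks: in Test~2 you should shorten $I$ after fixing $\epsilon$ (or restrict the nonnegative $t$-integral to a short subinterval) so that the tube around $\eta(t_0)-\eta(I)$ actually fits in $E=B(0,\epsilon)$. The $C^d$ issue you flag is harmless, since the integral form of Taylor's theorem gives the $j$th frame coordinate of $\eta(t_0+h)-\eta(t_0)$ as $O(|h|^j)$ directly.
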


\begin{remark}\label{rem:endpoint-literature-r32}
In \cite[Section~1.2 and Theorem~2]{CladekOu2018}, the printed torsion condition concerns $-1<t<1$, while the displayed integral extends over $[-1,1]$. The distinction matters for the full-interval conclusion: the curves in \Propref{prop:endpoint-flat} satisfy the interior condition but violate a sparse bound for suitable exponents in $\inte(\Oc_2)$. In contrast, \Thmref{thm:torsion-main} assumes
\begin{align*}
\inf_{-1\le t\le1}\bigl|\det(\gamma'(t),\ldots,\gamma^{(d)}(t))\bigr|>0.
\end{align*}
The terminal average $T_{\gamma,a}$ already exhibits the obstruction. For torsion nonzero only on $(-1,1)$, the positive conclusion applies after restriction to $[-a,a]$, $0<a<1$.
\end{remark}

The asymmetric family also yields the matrix weighted bound. Its proof will compare the boxes $Q\subseteq Q^+$ with the metric neighborhoods $\lambda Q$ required by the matrix embedding theorem.
\begin{proposition}\label{prop:B}
Let $W:\R^d\to\C^{n\times n}$ be a matrix weight, and assume
\begin{align*}
1<p<\infty,\qquad 1\le r<p<s'<\infty,\qquad [W]_{p,r,s}<\infty.
\end{align*}
For every sparse $\Sc\subseteq\mathscr D_\star$ and bounded compactly supported $\vecF,\vecG:\R^d\to\C^n$, we claim
\begin{align}\label{eq:matrix-embedding}
\sum_{Q\in\Sc}|Q|\,
\llangle W^{-1/p}\vecF\rrangle_{r,Q^+}\circ
\llangle W^{1/p}\vecG\rrangle_{s,Q}
\le C[W]_{p,r,s}^{\frac{s'-p}{s'(p-r)}+\frac1{p'}}
\|\vecF\|_{L^p}\|\vecG\|_{L^{p'}}.
\end{align}
Here $C$ depends only on the adapted geometry, $n,p,r,s$ and the sparsity parameter.
\end{proposition}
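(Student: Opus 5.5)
The plan is to reduce \eqref{eq:matrix-embedding} to the matrix sparse embedding of \Extref{KakaroumpasNguyenVardakis2026}{Theorem~1.3} \cite{KakaroumpasNguyenVardakis2026}, applied in the space of homogeneous type $(\R^d,\rho_\gamma,dx)$. Here $\rho_\gamma$ is the anisotropic quasi-metric whose balls are comparable to the translated adapted boxes of $\Qc_\gamma$. Concretely, I take $\rho_\gamma(x,y)\coloneqq\max_j|x_j-y_j|^{1/\alpha_j}$ in the monomial coordinates, or in the Taylor frame with $\alpha_j=j$. Its balls $B(x,\ell)$ are exactly centered genuine adapted boxes with side-lengths $(2\ell)^{\alpha_j}$, up to a factor in the radius. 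Lebesgue measure is doubling for $\rho_\gamma$ because $|B(x,\ell)|\eqsim\ell^{\sum\alpha_j}$. The first step is therefore the full ball--box comparison \eqref{eq:matrix-ball-box-equivalence-r31}. It says that every metric ball $B$ is contained in a box $R\in\Qc_\gamma$ with $|R|\eqsim|B|$, and conversely. Consequently, the $A_{\alpha,\beta}$ characteristic in \eqref{eq:matrix-Aab}, taken over $\Qc_\gamma$, dominates up to constants the corresponding characteristic over all $\rho_\gamma$-balls, which is the quantity appearing in the cited theorem. Enlarging the averaging set only costs a constant in both inner and outer averages, because the integrand is nonnegative and the measures are comparable.

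The second step is to realize $\mathscr D_\star$ as a dyadic system in the sense required there. By \Lemref{lem:corrected-grid}, each generation $\mathscr D_\star(k)$ partitions $\R^d$, the generations are nested, and each $Q\in\mathscr D_\star(k)$ has sides comparable to $2^{k\alpha_j}$. Hence $Q$ contains a $\rho_\gamma$-ball of radius $c2^k$ about $c_Q$ and is contained in one of radius $C2^k$. Given this, $Q^+$ is comparable to a ball $\lambda Q\coloneqq B(c_Q,\lambda 2^k)$ for a fixed $\lambda=\lambda(A_\gamma,\gamma)$, and the nesting $Q\subseteq P\Rightarrow Q^+\subseteq P^+$ is what lets these enlarged balls play the role of the dilates $\lambda Q$ in the embedding theorem.

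The third step handles the asymmetric form. The cited embedding treats forms of the type $\sum_{Q}|Q|\,\llangle W^{-1/p}\vecF\rrangle_{r,\lambda Q}\circ\llangle W^{1/p}\vecG\rrangle_{s,\lambda Q}$, or the version with a dyadic cube on one side. I would use two monotonicity facts:
\begin{align*}
\llangle\vecf\rrangle_{r,Q^+}\subseteq \Bigl(\tfrac{|\lambda Q|}{|Q^+|}\Bigr)^{1/r}\llangle\vecf\rrangle_{r,\lambda Q}
\quad\text{when } Q^+\subseteq\lambda Q,\qquad
\llangle\vecg\rrangle_{s,Q}\subseteq \Bigl(\tfrac{|Q^+|}{|Q|}\Bigr)^{1/s}\llangle\vecg\rrangle_{s,Q^+}.
\end{align*}
Both follow from the support-function identity $h_{\llangle\vecf\rrangle_{r,Q}}(e)=\langle|\langle\vecf,e\rangle|\rangle_{r,Q}$. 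With these, the left side of \eqref{eq:matrix-embedding} is bounded by a constant times the symmetric form over the balls $\{B_Q\}_{Q\in\Sc}$. The balls $B_Q$ are a sparse family in $(\R^d,\rho_\gamma)$, with sets $E_Q\subseteq Q\subseteq B_Q$ and $|E_Q|\ge\eta|Q|\gtrsim|B_Q|$. The cited theorem then gives the bound with the stated power of the ball characteristic, and the first step replaces that by $[W]_{p,r,s}$.

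The main obstacle is checking that the cited theorem really applies with the constants claimed. In particular, its hypotheses must be met either by a sparse family of balls arising from a dyadic system, or by the pair $(Q,\lambda Q)$ with $\lambda$ fixed. The sparse collection $\{B_Q\}$ must also satisfy the Carleson or packing condition in the form the theorem uses. I would verify this through the Carleson transfer \eqref{eq:adapted-carleson-transfer}. That is, I would show
\begin{align*}
\sum_{Q\in\Sc,\,Q\subseteq P}|Q^+|\lesssim|P^+|,
\end{align*}
using $|Q^+|\eqsim|Q|$, sparseness in $\mathscr D_\star$, and nesting. Attention is also needed so that the dependence on $W$ stays exactly $[W]^{\frac{s'-p}{s'(p-r)}+\frac1{p'}}$, with no loss beyond the geometric constants.
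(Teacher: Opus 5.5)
Your outline follows the paper's argument. The ingredients match: the anisotropic metric $\rho_\gamma$, which is a genuine metric here since all $\alpha_j\ge1$ after normalization (the paper writes $d_\gamma=\rho_\gamma^\kappa$); the ball--box comparison \eqref{eq:matrix-ball-box-equivalence-r31} obtained from \eqref{eq:matrix-characteristic-inclusion-r4}; the containment $Q^+\subseteq\lambda Q$ with $|\lambda Q|\eqsim|Q|$; and the convex body inclusion \eqref{eq:dilated-average-inclusion} applied to $(Q^+,\lambda Q)$ and $(Q,\lambda Q)$, after which \Extref{KakaroumpasNguyenVardakis2026}{Theorem~1.3} controls the symmetric form. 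The gap is exactly the hypothesis check you defer, and your proposed remedy addresses the wrong point. The packing condition is automatic, because the witnesses $E_Q\subseteq Q$ are disjoint, so $\sum_{Q\in\Sc,\,Q\subseteq P}|Q|\le\eta^{-1}|P|$. The Carleson transfer \eqref{eq:adapted-carleson-transfer}, which converts genuine boxes into dyadic ones, plays no role here, and neither does the nesting $Q^+\subseteq P^+$. The real obstruction is your claim in the second step that $\mathscr D_\star$ itself is a dyadic system in the required sense. Consecutive generations of $\mathscr D_\star$ differ in metric scale only by the factor $2^\kappa$, while the inner and outer radii $c_*R_k$ and $R_k$ in \eqref{eq:matrix-grid-ball-bounds-r31} differ by $c_*^{-1}$. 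Hence the compatibility condition $12C_0\delta\le c_0$ of the dyadic systems in \cite[Subsection~2.2]{KakaroumpasNguyenVardakis2026} fails for $\mathscr D_\star$.

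The paper repairs this by taking residue classes of generations, $\mathscr D_b^j=\mathscr D_\star(b-m_0j)$ for $0\le b<m_0$, with $2^{-m_0\kappa}\le c_*/24$. For each $\mathscr D_b$ it checks the inner and outer balls and the condition $12C_{0,b}\delta\le c_{0,b}$ in \eqref{eq:matrix-christ-ball-hypotheses-r31}, the center separation and covering, and the outer-ball nesting \eqref{eq:grouped-outer-ball-inclusion-r17}. It then splits $\Sc$ into the $m_0$ subfamilies $\Sc\cap\mathscr D_b$, keeping the original witnesses, at a cost $m_0=m_0(\gamma)$. Two smaller points also need care. First, the dilate in the cited theorem is the metric neighborhood \eqref{eq:matrix-source-neighborhood-r31} with $\ell_b(Q)=4R_{k(Q)}$, not the ball $B(c_Q,\lambda2^k)$, so you need one more application of \eqref{eq:dilated-average-inclusion} or a direct proof that $Q^+\subseteq\lambda Q$. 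Second, the exact power of $[W]_{p,r,s}$ is not automatic. It comes from the choice $p_0=r$, $q_0=s'$, $t=p/r$, $\sigma=s'/p$, for which $[W^{\sigma'}]_{A_{a',b}}=[W]_{p,r,s}$ and $1<a'\le b<\infty$. The accompanying local integrability $W^{-1/p}\in L^a_{\mathrm{loc}}$ and $W^{1/p}\in L^b_{\mathrm{loc}}$ is what makes the averages on the left of \eqref{eq:matrix-embedding} finite.
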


To pass from \Propref{prop:B} to the operator norm in \Corref{cor:matrix}, we use the weighted duality identity
\begin{align}\label{eq:duality}
        \|\vech\|_{L^p(W)}
        =\sup_{\|\vecG\|_{L^{p'}(W^{-p'/p})}=1}
        \left|\int_{\R^d}\langle\vech(x),\vecG(x)\rangle_{\C^n}\,dx\right|.
\end{align}

\medskip

\begin{proof}[Proof of \Thmref{thm:monomial-main}]
Fix $r<p<s'$. First, the scalar sparse bound in \Thmref{thm:A} gives continuity:
\begin{align}\label{eq:adapted-continuity-r33}
\Lambda_{\Sc,r,s}(f,g)
&\le\delta_{\gamma}^{-1}\int_{\R^d}M_r^\gamma f(x)M_s^\gamma g(x)\,dx
\le C_{\gamma,p,r,s}\|f\|_{L^p}\|g\|_{L^{p'}},
\end{align}
Thus $\Lambda\in\mathcal B(L^p\times L^{p'},\C)$. By \Propref{prop:adapted-lifting} we obtain one genuine adapted sparse family:
\begin{align*}
|\langle\Hvec\vecf,\vecg\rangle|
\lesssim_{\gamma,r,s,n}\sum_{Q\in\Sc}|Q|\,
\llangle\vecf\rrangle_{r,Q}\circ\llangle\vecg\rrangle_{s,Q}.
\end{align*}
This is \eqref{eq:main-monomial}. The asymmetric bound \eqref{eq:adapted-asymmetric-lifting} supplies the dyadic estimate for \Corref{cor:matrix}.
\end{proof}

\begin{proof}[Proof of \Thmref{thm:torsion-main}]
\noindent\textit{Uniform geometry.} We first control every annular scale $0<\lambda\le1$, including the terminal annuli. In the fixed Taylor frame write $\gamma_i(t)\coloneqq\langle\gamma(t),e_i\rangle$ and rescale:
\begin{align}\label{eq:torsion-rescaling-r34}
\Gamma_\lambda(u)\coloneqq\delta_{\lambda}^{-1}\gamma(\lambda u),\qquad
\delta_{\lambda}(x_1,\ldots,x_d)\coloneqq(\lambda x_1,\ldots,\lambda^d x_d),
\qquad 0<\lambda\le1,\quad |u|\le1.
\end{align}
For $0\le j\le d$ and $1\le i\le d$, we have
\begin{align}\label{eq:torsion-derivatives-r34}
\partial_u^j\Gamma_{\lambda,i}(u)
=\lambda^{j-i}\gamma_i^{(j)}(\lambda u).
\end{align}
The Taylor-frame cancellations are $\gamma_i^{(j)}(0)=0$ for $0\le j<i$. For these indices, Taylor's integral formula yields
\begin{align}\label{eq:torsion-uniform-taylor-r34}
\lambda^{j-i}\gamma_i^{(j)}(\lambda u)
=\frac{u^{i-j}}{(i-j-1)!}\int_0^1(1-v)^{i-j-1}\gamma_i^{(i)}(v\lambda u)\,dv.
\end{align}
For $j\ge i$ the right side of \eqref{eq:torsion-derivatives-r34} is bounded by $\|\gamma_i^{(j)}\|_{L^\infty}$, since $\lambda^{j-i}\le1$. Consequently the rescaled curves satisfy
\begin{align}\label{eq:torsion-compact-family-r34}
\sup_{0<\lambda\le1}\|\Gamma_\lambda\|_{C^d([-1,1])}&\le C_\gamma,\\
\inf_{0<\lambda\le1}\inf_{|u|\le1}
\bigl|\det(\Gamma_\lambda'(u),\ldots,\Gamma_\lambda^{(d)}(u))\bigr|
&=\inf_{|t|\le1}\bigl|\det(\gamma'(t),\ldots,\gamma^{(d)}(t))\bigr|>0.\notag
\end{align}
The determinant scaling is exact:
\begin{align*}
\det(\Gamma_\lambda',\ldots,\Gamma_\lambda^{(d)})
&=\lambda^{\sum_{j=1}^d j-\sum_{i=1}^d i}\det(\gamma'(\lambda u),\ldots,\gamma^{(d)}(\lambda u))\\
&=\det(\gamma'(\lambda u),\ldots,\gamma^{(d)}(\lambda u)).
\end{align*}
By \eqref{eq:torsion-uniform-taylor-r34}, the $C^d$ limit as $\lambda\downarrow0$ is
\begin{align}\label{eq:torsion-moment-limit-r34}
\Gamma_0(u)=\left(\gamma_1'(0)u,\frac{\gamma_2''(0)}{2!}u^2,\ldots,\frac{\gamma_d^{(d)}(0)}{d!}u^d\right).
\end{align}
\par\Needspace{3\baselineskip}\smallskip\noindent\textit{Uniform finite type.} The torsion lower bound gives a uniform derivative estimate. Let $V_\lambda(u)$ have columns $\Gamma_\lambda'(u),\ldots,\Gamma_\lambda^{(d)}(u)$, with lengths at most $M_\gamma\ge1$. The singular values give, for $|\omega|=1$,
\begin{align}\label{eq:torsion-uniform-finite-type-r36}
\max_{1\le j\le d}\bigl|\langle\omega,\Gamma_\lambda^{(j)}(u)\rangle\bigr|
&\ge\frac{1}{\sqrt d}\,\sigma_{\min}(V_\lambda(u))
\ge\frac{c_\gamma}{\sqrt d\,(\sqrt d M_\gamma)^{d-1}}>0,
\qquad 0\le\lambda\le1,\quad |u|\le1,
\end{align}
Here $c_\gamma$ is the torsion lower bound. Uniform $C^d$ continuity gives a partition into at most $N_\gamma$ intervals $J$ on each half-annulus, with a derivative order $j(J,\lambda,\omega)\in\{1,\ldots,d\}$ such that
\begin{align*}
\inf_{u\in J}|\langle\omega,\Gamma_\lambda^{(j(J,\lambda,\omega))}(u)\rangle|
\ge \frac{c_\gamma}{2\sqrt d(\sqrt dM_\gamma)^{d-1}}.
\end{align*}
The bound is independent of $\lambda,\omega$. We apply Van der Corput's lemma to amplitudes $\psi$ supported in the two half-annuli with $\|\psi\|_\infty+\operatorname{Var}_{\R}(\psi)<\infty$. Extending $\psi/u$ by zero outside its support gives
\begin{align*}
\left\|\frac{\psi}{u}\right\|_\infty
+\operatorname{Var}_{\R}\left(\frac{\psi}{u}\right)
\lesssim \|\psi\|_\infty+\operatorname{Var}_{\R}(\psi).
\end{align*}
For derivative order $j=1$, integration by parts applies since $|\langle\xi,\Gamma_\lambda'\rangle|\gtrsim_\gamma|\xi|$ and $|\langle\xi,\Gamma_\lambda''\rangle|\lesssim_\gamma|\xi|$. For $j\ge2$, the bounded-variation form of Van der Corput's lemma applies. The at most $N_\gamma$ intervals give
\begin{align}\label{eq:torsion-uniform-fourier-r36}
\sup_{0\le\lambda\le1}\biggl|\int e^{-i\langle\xi,\Gamma_\lambda(u)\rangle}\psi(u)\,\frac{du}{u}\biggr|
\le C_{\gamma}\bigl(\|\psi\|_\infty+\operatorname{Var}_{\R}(\psi)\bigr)(1+|\xi|)^{-1/d}.
\end{align}
\par\Needspace{3\baselineskip}\smallskip\noindent\textit{Quantitative Jacobian.} Fix $u_1<\cdots<u_d$ in an interval of length at most $h_\gamma$. Newton's divided-difference identity gives
\begin{align}\label{eq:torsion-vandermonde-r38}
&\det\bigl(\Gamma_\lambda'(u_1),\ldots,\Gamma_\lambda'(u_d)\bigr)\notag\\
&\quad=\prod_{1\le i<j\le d}(u_j-u_i)
\det\bigl(\Gamma_\lambda'[u_1],\Gamma_\lambda'[u_1,u_2],\ldots,\Gamma_\lambda'[u_1,\ldots,u_d]\bigr),
\end{align}
where brackets denote vector-valued divided differences. For $1\le j\le d$, the integral representation of divided differences and the compactness in \eqref{eq:torsion-compact-family-r34} yield
\begin{align}\label{eq:torsion-divided-difference-r38}
\sup_{0\le\lambda\le1}\sup_{\substack{u_1<\cdots<u_j\\u_j-u_1\le h}}
\left|\Gamma_\lambda'[u_1,\ldots,u_j]
-\frac{\Gamma_\lambda^{(j)}(u_1)}{(j-1)!}\right|\longrightarrow0
\qquad(h\downarrow0).
\end{align}
Consequently, choosing $h_\gamma>0$ sufficiently small in terms of $c_\gamma$ and the common $C^d$ modulus of the family, we obtain
\begin{align}\label{eq:torsion-configuration-r38}
\bigl|\det\bigl(\Gamma_\lambda'(u_1),\ldots,\Gamma_\lambda'(u_d)\bigr)\bigr|
&\ge \frac{c_\gamma}{2\prod_{j=0}^{d-1}j!}
\prod_{1\le i<j\le d}|u_j-u_i|,\\
&\hspace{18mm}0\le\lambda\le1,\qquad u_d-u_1\le h_\gamma.\notag
\end{align}
Both annuli can therefore be partitioned into a fixed number of intervals on which this Jacobian bound holds.
\par\Needspace{3\baselineskip}\smallskip\noindent\textit{Improving estimates.} We verify the quantitative $C^d$ hypotheses on each interval. Set
\begin{align}\label{eq:torsion-affine-density-r40}
L_\lambda(u)&\coloneqq\det(\Gamma_\lambda'(u),\ldots,\Gamma_\lambda^{(d)}(u)),\\
\omega_\lambda(u)&\coloneqq|L_\lambda(u)|^{2/[d(d+1)]},\qquad
0<c_\gamma\le|L_\lambda(u)|\le C_\gamma.\notag
\end{align}
Partition $[-1,-1/2]$ and $[1/2,1]$ into at most $N_\gamma$ intervals $J$ of length at most $h_\gamma$. By \eqref{eq:torsion-configuration-r38}, for $u_1,\ldots,u_d\in J$,
\begin{align}\label{eq:torsion-ds-jacobian-r40}
\left|\det(\Gamma_\lambda'(u_1),\ldots,\Gamma_\lambda'(u_d))\right|
&\ge c_\gamma\prod_{i<j}|u_i-u_j|\\
&\ge c'_\gamma\prod_{j=1}^d|L_\lambda(u_j)|^{1/d}
\prod_{i<j}|u_i-u_j|,\notag\\
 c_\gamma^{2/[d(d+1)]}\le\omega_\lambda(u)&\le C_\gamma^{2/[d(d+1)]}.
 \notag
\end{align}
The two hypotheses of \cite[Proposition~3.1]{DendrinosStovall2015} are now explicit in \eqref{eq:torsion-ds-jacobian-r40}: the Jacobian inequality and $\omega_\lambda\eqsim_\gamma1$ on each $J$. Thus, for measurable $E,F$ of finite measure,
\begin{align}\label{eq:torsion-rwt-r40}
\int_J\int_{\R^d}\mathbf1_E(x-\Gamma_\lambda(u))\mathbf1_F(x)\,dx\,du
&\lesssim_{\gamma,d}|E|^{2/(d+1)}
|F|^{(d^2-d+2)/(d^2+d)}.
\end{align}
The comparison $\omega_\lambda\eqsim_\gamma1$ gives the same estimate for $du$, and the adjoint obeys the reflected bound. For intervals meeting a half-annulus endpoint, apply the estimate first on compact subintervals of $\operatorname{int}J$ and pass to $J$ by monotone convergence. The constants are unchanged. In the $(1/r_0,1/s'_0)$-plane these are precisely the two nontrivial vertices of $\Oc_d$:
\begin{align}\label{eq:torsion-rwt-vertices-r40}
\left(\frac2{d+1},\frac{2(d-1)}{d(d+1)}\right),\qquad
\left(\frac{d^2-d+2}{d(d+1)},\frac{d-1}{d+1}\right).
\end{align}
The positive averages also satisfy uniform $L^1\to L^1$ and $L^\infty\to L^\infty$ bounds. Restricted weak-type interpolation at these four vertices gives, for every $(1/r_0,1/s'_0)\in\inte(\Oc_d)$,
\begin{align}\label{eq:torsion-strong-improving-r40}
\sup_{0\le\lambda\le1}
\left\|\int_J f(\cdot-\Gamma_\lambda(u))\,du\right\|_{L^{s_0'}}
\le C_{\gamma,r_0,s'_0}\|f\|_{L^{r_0}}.
\end{align}
For $\psi$ supported in the half-annuli with $\|\psi\|_\infty+\operatorname{Var}_{\R}(\psi)<\infty$, define $A_\lambda f\coloneqq\int f(\cdot-\Gamma_\lambda(u))\psi(u)\,du/u$. The improving bound follows from $|\psi(u)/u|\le2\|\psi\|_\infty$ and the fixed interval count $N_\gamma$. For $|y|\le1$, Plancherel and \eqref{eq:torsion-uniform-fourier-r36} give
\begin{align*}
\|A_\lambda-\tau_yA_\lambda\|_{L^2\to L^2}
&\le C_\gamma\bigl(\|\psi\|_\infty+\operatorname{Var}_{\R}(\psi)\bigr)
\sup_{\xi\in\R^d}\min\{2,|y||\xi|\}(1+|\xi|)^{-1/d}\\
&\lesssim_\gamma\bigl(\|\psi\|_\infty+\operatorname{Var}_{\R}(\psi)\bigr)|y|^{1/d}.
\end{align*}
Consequently,
\begin{align}\label{eq:torsion-quantitative-improving-r38}
\sup_{0\le\lambda\le1}\|A_\lambda\|_{L^{r_0}\to L^{s_0'}}&\le C_{\gamma,r_0,s_0',\psi},\\
\sup_{0\le\lambda\le1}\|A_\lambda-\tau_yA_\lambda\|_{L^2\to L^2}
&\lesssim_{\gamma,\psi}|y|^{1/d},\qquad |y|\le1.\notag
\end{align}
For a prescribed interior pair $(1/r,1/s')$, choose $(1/r_0,1/s_0')\in\inte(\Oc_d)$ and $0<\vartheta<1$ with
\begin{align}\label{eq:torsion-interpolation-exponents-r38}
\left(\frac1r,\frac1{s'}\right)
=(1-\vartheta)\left(\frac12,\frac12\right)
+\vartheta\left(\frac1{r_0},\frac1{s_0'}\right).
\end{align}
Interpolation of the two uniform bounds in \eqref{eq:torsion-quantitative-improving-r38} gives the required translation modulus,
\begin{align}\label{eq:torsion-translation-modulus-r38}
\sup_{0\le\lambda\le1}
\|A_\lambda-\tau_yA_\lambda\|_{L^r\to L^{s'}}
\lesssim_{\gamma,r,s,\psi}|y|^{(1-\vartheta)/d},\qquad |y|\le1.
\end{align}
\par\Needspace{3\baselineskip}\smallskip\noindent\textit{Return to physical scales.} Set $A\coloneqq d(d+1)/2$ and $D_\lambda f\coloneqq f\circ\delta_\lambda$. Define
\begin{align}\label{eq:torsion-physical-annulus-r39}
A_{\gamma,\lambda}^{\psi} f(x)
&\coloneqq\int f(x-\gamma(\lambda u))\psi(u)\,\frac{du}{u},\\
A_{\gamma,\lambda}^{\psi}
&=D_\lambda^{-1}A_\lambda D_\lambda,
\qquad |\det\delta_\lambda|=\lambda^A.\notag
\end{align}
For $\vartheta$ as in \eqref{eq:torsion-interpolation-exponents-r38}, let $\eta\coloneqq(1-\vartheta)/d>0$. The norm change under $D_\lambda$ and \eqref{eq:torsion-translation-modulus-r38} give
\begin{align}\label{eq:torsion-physical-scaling-r39}
\|A_{\gamma,\lambda}^{\psi}\|_{L^r\to L^{s'}}
&\le C_{\gamma,r,s,\psi}\lambda^{-A(1/r-1/s')},\\
\|A_{\gamma,\lambda}^{\psi}-\tau_yA_{\gamma,\lambda}^{\psi}\|_{L^r\to L^{s'}}
&\le C_{\gamma,r,s,\psi}\lambda^{-A(1/r-1/s')}
\bigl|\delta_\lambda^{-1}y\bigr|^\eta,
\qquad \bigl|\delta_\lambda^{-1}y\bigr|\le1.\notag
\end{align}
Indeed, $D_\lambda\tau_y=\tau_{\delta_\lambda^{-1}y}D_\lambda$ and
\begin{align*}
\|D_\lambda\|_{L^r\to L^r}&=\lambda^{-A/r},\qquad
\|D_\lambda^{-1}\|_{L^{s'}\to L^{s'}}=\lambda^{A/s'}.
\end{align*}
\par\Needspace{3\baselineskip}\smallskip\noindent\textit{Sharp annular cutoffs.} For the pieces required by \cite[Section~3, Lemma~3.2]{CladekOu2018}, let $J$ be one interval contained in either $[-1,-1/2]$ or $[1/2,1]$, the zero extension of $u^{-1}\mathbf1_J(u)$ satisfies
\begin{align*}
\bigl\|u^{-1}\mathbf1_J\bigr\|_\infty&\le2,\qquad
\operatorname{Var}_{\R}\bigl(u^{-1}\mathbf1_J\bigr)\le4.
\end{align*}
Thus the Fourier estimate \eqref{eq:torsion-uniform-fourier-r36} and the improving and translation bounds in \eqref{eq:torsion-strong-improving-r40}--\eqref{eq:torsion-physical-scaling-r39} hold with constants independent of the endpoints of $J$ and $\lambda$. The two half-annuli are treated separately, which changes these constants by at most a fixed factor.

\par\Needspace{3\baselineskip}\smallskip\noindent\textit{Uniform truncations.} The stopping terms require uniform $L^p$ bounds. For $k\ge0$ let $\mu_k=\nu_k^+-\nu_k^-$ be the signed annular measure, with $\nu_k^\pm$ the positive half-annular measures. Then
\begin{align*}
\lambda=2^{-k},\qquad \nu_k^\pm(\R^d)=\log2,\qquad
\mu_k(\R^d)=0.
\end{align*}
In the Taylor frame define
\begin{align}\label{eq:torsion-annular-multiplier-r40}
U_k(\xi)&\coloneqq\max_{1\le j\le d}2^{-kj}|\xi_j|,\\
\|\mu_k\|_{\mathrm{TV}}+\|\nu_k^+\|_{\mathrm{TV}}+\|\nu_k^-\|_{\mathrm{TV}}&\le C_\gamma,\notag\\
|\widehat\mu_k(\xi)|&\le C_\gamma\min\{U_k(\xi),(1+U_k(\xi))^{-1/d}\},\notag\\
|\widehat\nu_k^\pm(\xi)|&\le C_\gamma(1+U_k(\xi))^{-1/d},\notag\\
2^{-d}U_k(\xi)&\le U_{k+1}(\xi)\le\tfrac12 U_k(\xi).\notag
\end{align}
The low-frequency estimate uses $\mu_k(\R^d)=0$, and the high-frequency estimate uses \eqref{eq:torsion-uniform-fourier-r36}. The same oscillatory argument controls $\nu_k^\pm$. Express these inequalities through the dual quasi-norm
\begin{align}\label{eq:torsion-dual-quasinorm-r40}
\rho_*(\xi)&\coloneqq\max_{1\le j\le d}|\xi_j|^{1/j},\qquad
\rho_*(\delta_{2^{-k}}^T\xi)=2^{-k}\rho_*(\xi),\\
|\widehat\mu_k(\xi)|&\lesssim_\gamma\min\{2^{-k}\rho_*(\xi),\,(2^{-k}\rho_*(\xi))^{-1/d}\}.\notag
\end{align}
The next lemma gives uniform truncation bounds even when the annular measures vary with scale.

\begin{lemma}[Uniform annular truncations]\label{lem:uniform-annular-truncations-r42}
Suppose that $\mu_k=\nu_k^+-\nu_k^-$, $k\ge0$, where $\nu_k^\pm$ are positive measures of uniformly bounded mass, supported in
\begin{align*}
\{x:|x_i|\le C2^{-ki},\ 1\le i\le d\}.
\end{align*}
Assume, with $\rho_*$ as in \eqref{eq:torsion-dual-quasinorm-r40}, that
\begin{align}\label{eq:annular-hypotheses-r42}
|\widehat\mu_k(\xi)|&\le C\min\{2^{-k}\rho_*(\xi),(2^{-k}\rho_*(\xi))^{-\epsilon}\},\\
|\widehat\nu_k^\pm(\xi)|&\le C(1+2^{-k}\rho_*(\xi))^{-\epsilon},\qquad \epsilon>0.\notag
\end{align}
Then, for every $1<p<\infty$,
\begin{align}\label{eq:annular-maximal-r42}
\left\|\sup_{k\ge0,\,\pm}\nu_k^\pm*|f|\right\|_{L^p}
+\left\|\sup_{k\ge0,\,\pm}\widetilde\nu_k^\pm*|f|\right\|_{L^p}
&\le C_p\|f\|_{L^p},\\
\sup_{0\le k_0\le k_1}\left\|\sum_{k=k_0}^{k_1}\mu_k*f\right\|_{L^p}
&\le C_p\|f\|_{L^p}.\notag
\end{align}
Here $\widetilde\nu(E)\coloneqq\nu(-E)$, and the constants depend only on the displayed quantitative bounds, $d,p,\epsilon$.
\end{lemma}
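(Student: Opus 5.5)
The proof follows the Duoandikoetxea--Rubio de Francia scheme for maximal functions and singular integrals built from measures with Fourier decay, adapted to the anisotropic dilations $\delta_{2^{-k}}$ and the quasi-norm $\rho_*$.

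\emph{Littlewood--Paley partition.} Fix a smooth anisotropic partition of unity in frequency: let $\phi$ be a Schwartz function with $\widehat\phi\equiv1$ near the origin, set $\widehat{\phi_k}(\xi)\coloneqq\widehat\phi(\delta_{2^{-k}}^T\xi)$, and put $\widehat{\Delta_j}\coloneqq\widehat{\phi_{j+1}}-\widehat{\phi_j}$. Each $\Delta_j$ is then localized where $\rho_*(\xi)\eqsim2^j$. The scaling identity for $\rho_*$ in \eqref{eq:torsion-dual-quasinorm-r40} shows that these are the standard Littlewood--Paley projections for a homogeneous-type (parabolic) dilation group. Hence the square function estimate $\|(\sum_j|\Delta_jf|^2)^{1/2}\|_{L^p}\eqsim\|f\|_{L^p}$ holds for $1<p<\infty$, and the associated strong maximal function $\mathcal M_\delta$ is bounded on $L^p$. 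The support hypothesis places each $\nu_k^\pm$ in a $\delta_{2^{-k}}$-box of fixed eccentricity, so $\nu_k^\pm*|f|\lesssim\mathcal M_\delta f$ for positive parts. This is the key input for the vector-valued steps below.

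\emph{Maximal bound.} Decompose
\[
\nu_k^\pm=\nu_k^\pm*\phi_k+\sum_{j\ge0}\nu_k^\pm*\Delta_{k+j}.
\]
The first term is pointwise bounded by $C\mathcal M_\delta f$. For the $j$-th piece, set $T_jf\coloneqq\sup_k|\nu_k^\pm*\Delta_{k+j}f|$ and estimate it in two ways. On $L^2$, bound the supremum by the square function. The decay hypothesis gives $|\widehat{\nu_k^\pm}|\lesssim2^{-j\epsilon}$ on the support of $\widehat{\Delta_{k+j}}$, so Plancherel and almost orthogonality yield $\|T_j\|_{2\to2}\lesssim2^{-j\epsilon}$. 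On $L^q$ for $q$ near $1$ and near $\infty$, the bound $|\nu_k^\pm*\Delta_{k+j}f|\lesssim\mathcal M_\delta(\Delta_{k+j}f)$ together with the Fefferman--Stein inequality for $\mathcal M_\delta$ gives polynomial growth, $\|T_j\|_{q\to q}\lesssim1+j$. Interpolating the two estimates gives geometric decay in $j$ for every $1<p<\infty$, and summing over $j$ proves the first line of \eqref{eq:annular-maximal-r42}. The reflected measures $\widetilde\nu_k^\pm$ satisfy identical hypotheses, since $|\widehat{\widetilde\nu}(\xi)|=|\widehat\nu(-\xi)|$ and the support box is symmetric. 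I expect the $L^q$ step to need care for $q$ below $2$. The standard fix is a bootstrap: first obtain the maximal bound for $p>2$, then use the vector-valued estimate $\|(\sum|\nu_k*g_k|^2)^{1/2}\|_q$, which follows from $L^{q'/2}$ boundedness of the maximal operator by duality, and iterate down to $p>1$.

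\emph{Truncated singular integrals.} Write
\[
\sum_{k=k_0}^{k_1}\mu_k*f=\sum_{j\in\Z}\sum_{k=k_0}^{k_1}\mu_k*\Delta_{k+j}f\eqqcolon\sum_jS_jf,
\]
where the range $j<0$ also absorbs the term coming from the $\phi$-piece at the bottom scale. On $L^2$, orthogonality and the bound $|\widehat\mu_k|\lesssim\min\{2^j,2^{-j\epsilon}\}$ on the support of $\widehat{\Delta_{k+j}}$ give $\|S_j\|_{2\to2}\lesssim2^{-\epsilon'|j|}$. This is uniform in $k_0,k_1$, because the estimate is applied to each multiplier separately. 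For the $L^q$ bounds I combine Littlewood--Paley for the $\Delta$'s with the vector-valued inequality
\[
\Bigl\|\Bigl(\sum_k|\mu_k*g_k|^2\Bigr)^{1/2}\Bigr\|_q\lesssim\Bigl\|\Bigl(\sum_k|g_k|^2\Bigr)^{1/2}\Bigr\|_q .
\]
This follows from the maximal bound just proved, applied to $|\mu_k|\le\nu_k^++\nu_k^-$ via duality and $|\mu_k*g|^2\lesssim\|\mu_k\|\,|\mu_k|*|g|^2$, in the range $q\ge2$; the case $q<2$ follows by duality with $\widetilde\mu_k$. It gives $\|S_j\|_{q\to q}\lesssim1$ uniformly. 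Interpolation then gives summable decay in $j$ on every $L^p$, with constants independent of $k_0\le k_1$, which proves the second line of \eqref{eq:annular-maximal-r42}.

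The main obstacle is handling scale-dependent measures. Nothing uses a fixed profile $\mu_k=(\mu)_{2^{-k}}$; only the uniform support bound, the uniform mass bound, and the uniform decay estimates \eqref{eq:annular-hypotheses-r42} enter. The argument above relies on exactly these, so it goes through unchanged.
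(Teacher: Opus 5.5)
Your architecture matches the paper's: a Duoandikoetxea--Rubio de Francia decomposition, $L^2$ decay from Plancherel, vector-valued inequalities obtained by duality from maximal bounds, interpolation and summation in $j$, and the same treatment of $\sum_k\mu_k*\Delta_{k+j}f$. However, your $L^q$ input for the maximal estimate rests on a false claim. The support hypothesis does \emph{not} give $\nu_k^\pm*|f|\lesssim\mathcal M_\delta f$ pointwise, because the measures are singular; in the application they are arc length on pieces of the curve. Take normalized arc length $\nu$ on a curve segment and $f=\mathbf 1_{B(x_0,w)}$. Then $\nu*f(x)=\nu(B(x-x_0,w))\gtrsim w$ whenever $x-x_0$ is an interior point of the segment. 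On the other hand, every box containing $x$ and meeting $B(x_0,w)$ has measure bounded below, so $\mathcal M_\delta f(x)\lesssim w^d$. If your domination held, the first line of the lemma would be immediate with no Fourier hypothesis at all. Consequently the bounds $|\nu_k^\pm*\Delta_{k+j}f|\lesssim\mathcal M_\delta(\Delta_{k+j}f)$ and $\|T_j\|_{q\to q}\lesssim1+j$ are unsupported, and your interpolation has no endpoint below $2$. Only the smoothed piece $\nu_k^\pm*\phi_k$ is controlled by $\mathcal M_\delta$. For $p>2$ nothing is lost, since the positive maximal operator interpolates between your correct $L^2$ bound and the trivial $L^\infty$ bound.

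Below $2$, the bootstrap you mention is the actual mechanism rather than a refinement, and as stated it uses the wrong exponent. By duality, the vector-valued inequality for $\nu_k^\pm$ at $q<2$ reduces to the one for $\widetilde\nu_k^\pm$ at $q'>2$. The Jensen--duality step then requires $\sup_k\nu_k^\pm*|\cdot|$ to be bounded on $L^{(q'/2)'}=L^{q/(2-q)}$, not on $L^{q'/2}$. This exponent tends to $1$ as $q\downarrow1$, which is why the argument must be iterated. Suppose the maximal bounds hold on $L^t$ for $t>q_0$. Then you get the vector-valued inequality, and hence $\|T_j\|_{q\to q}\lesssim1$, only for $2q_0/(q_0+1)<q<2$. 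Interpolating with $\|T_j\|_{2\to2}\lesssim2^{-j\epsilon}$ and summing in $j$ extends the maximal bound to that range. Iterating $q_{m+1}=2q_m/(q_m+1)$ from $q_0=2$ then reaches every $p>1$; this is precisely the paper's argument. Once the maximal bounds for $\nu_k^\pm$ and $\widetilde\nu_k^\pm$ hold on all of $(1,\infty)$, your vector-valued estimate for $\mu_k$ and your interpolation for $S_j$ are correct and coincide with the paper's final step.
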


\begin{proof}
\noindent\textit{Cancellation and frequency localization.} Split each positive measure into a smooth part and a mean-zero remainder. Choose $\phi\ge0$ in $C_c^\infty$, $\int\phi=1$, and define
\begin{align*}
A&=\frac{d(d+1)}2,\qquad \phi_k(x)=2^{kA}\phi(\delta_{2^k}x),\\
m_k^\pm&=\nu_k^\pm(\R^d),\qquad
\sigma_k^\pm=\nu_k^\pm-m_k^\pm\phi_k\,dx,\qquad \sigma_k^\pm(\R^d)=0.
\end{align*}
By \eqref{eq:annular-hypotheses-r42},
\begin{align}\label{eq:annular-smooth-difference-r42}
|\widehat\sigma_k^\pm(\xi)|
\le C\min\{2^{-k}\rho_*(\xi),(2^{-k}\rho_*(\xi))^{-\epsilon_0}\},\qquad
\epsilon_0\coloneqq\min\{\epsilon,1\}>0.
\end{align}
Let $P_j$ be a smooth Littlewood--Paley decomposition for the dual dilations $\delta_{2^j}^T$, with Fourier support $\rho_*(\xi)\eqsim2^j$. For $j\in\Z$, define
\begin{align}\label{eq:annular-frequency-pieces-r42}
S_j^\pm f\coloneqq\left(\sum_{k\ge0}|\sigma_k^\pm*P_{k+j}f|^2\right)^{1/2}.
\end{align}
On $\operatorname{supp}\widehat P_{k+j}$, the multiplier in \eqref{eq:annular-smooth-difference-r42} is bounded by $C2^{-c|j|}$ for some $c>0$. Plancherel and the bounded frequency overlap give
\begin{align}\label{eq:annular-maximal-l2-r42}
\|S_j^\pm f\|_2&\le C2^{-c|j|}\|f\|_2,\\
\left\|\sup_{k\ge0}\nu_k^\pm*|f|\right\|_2
&\le C\|f\|_2,\notag
\end{align}
where the second inequality follows by summing in $j$ and bounding $\sup_k\phi_k*|f|$ by the anisotropic Hardy--Littlewood maximal operator. The same estimates hold for reflected measures.

\par\Needspace{3\baselineskip}\smallskip\noindent\textit{Maximal-function bootstrap.} Suppose the positive and reflected maximal operators are bounded on $L^q$ for $q>q_0$. To reach exponents below $2$ we require
\begin{align*}
\frac{p}{2-p}>q_0\quad\Longleftrightarrow\quad
\frac{2q_0}{q_0+1}<p<2.
\end{align*}
Jensen's inequality and duality then give
\begin{align}\label{eq:annular-vector-bootstrap-r42}
\left\|\left(\sum_k|\nu_k^\pm*g_k|^2\right)^{1/2}\right\|_p
&\le C_p\left\|\left(\sum_k|g_k|^2\right)^{1/2}\right\|_p.
\end{align}
Indeed, the dual estimate for $p'>2$ follows from
\begin{align*}
\sum_k|\widetilde\nu_k^\pm*h_k|^2
&\le\sum_k\widetilde\nu_k^\pm*|h_k|^2\,\nu_k^\pm(\R^d),\\
\int\sum_k\widetilde\nu_k^\pm*|h_k|^2\,w
&\le C\int\sum_k|h_k|^2\sup_k\nu_k^\pm*w,
\end{align*}
followed by H\"older's inequality with exponents $p'/2$ and $(p'/2)'=p/(2-p)$. The same estimate holds for $\phi_k$ by the anisotropic maximal theorem. Consequently, \eqref{eq:annular-frequency-pieces-r42}, the vector-valued estimate and the Littlewood--Paley square function imply
\begin{align*}
\|S_j^\pm f\|_p\le C_p\|f\|_p
\qquad\left(\frac{2q_0}{q_0+1}<p<2\right).
\end{align*}
Interpolate with \eqref{eq:annular-maximal-l2-r42} and sum the frequency shifts:
\begin{align*}
\|S_j^\pm f\|_p&\lesssim_p2^{-c_p|j|}\|f\|_p,\\
\Bigl\|\sup_k\nu_k^\pm*|f|\Bigr\|_p
&\lesssim_p\Bigl\|\sup_k\phi_k*|f|\Bigr\|_p
+\sum_{j\in\Z}\|S_j^\pm(|f|)\|_p\\
&\lesssim_p\|f\|_p+\sum_{j\in\Z}2^{-c_p|j|}\|f\|_p
\lesssim_p\|f\|_p.
\end{align*}
Starting from $q_0=2$, iterate
\begin{align}\label{eq:annular-bootstrap-sequence-r42}
q_{m+1}\coloneqq\frac{2q_m}{q_m+1},\qquad
q_m=\frac{2^{m+1}}{2^{m+1}-1}\downarrow1,\quad q_0=2.
\end{align}
This covers $1<p<2$. For $p\ge2$, interpolate the established $L^2$ and the uniform $L^\infty$ bounds.

\par\Needspace{3\baselineskip}\smallskip\noindent\textit{Signed measures and finite sums.} Apply the positive maximal bound to $\nu_k^\sigma$ and its reflection, $\sigma\in\{+,-\}$. Jensen's inequality and duality (at $q=p/(2-p)$ for $p<2$, at exponent $p'$ for $p>2$) yield
\begin{align}\label{eq:annular-positive-vector-r43}
\left\|\left(\sum_k|\nu_k^\sigma*g_k|^2\right)^{1/2}\right\|_p
+\left\|\left(\sum_k|\widetilde\nu_k^\sigma*g_k|^2\right)^{1/2}\right\|_p
\le C_p\left\|\left(\sum_k|g_k|^2\right)^{1/2}\right\|_p.
\end{align}
Since $\mu_k=\nu_k^+-\nu_k^-$, the triangle inequality in $\ell^2$ followed by the $L^p$ triangle inequality yields
\begin{align}\label{eq:annular-vector-all-p-r42}
\left\|\left(\sum_k|\mu_k*g_k|^2\right)^{1/2}\right\|_p
&\le\left\|\left(\sum_k|\nu_k^+*g_k|^2\right)^{1/2}\right\|_p
+\left\|\left(\sum_k|\nu_k^-*g_k|^2\right)^{1/2}\right\|_p\\
&\le C_p\left\|\left(\sum_k|g_k|^2\right)^{1/2}\right\|_p.\notag
\end{align}
For a fixed finite range $K=[k_0,k_1]\cap\Z$ let $T_{j,K}f\coloneqq\sum_{k\in K}\mu_k*P_{k+j}f$. By \eqref{eq:annular-vector-all-p-r42}, H\"older's inequality in $\ell^2$ and the dual Littlewood--Paley estimate,
\begin{align}\label{eq:annular-signed-lp-r42}
\|T_{j,K}f\|_p&\le C_p\|f\|_p,\\
\|T_{j,K}f\|_2&\le C2^{-c|j|}\|f\|_2,\notag
\end{align}
The second inequality follows by bounded frequency overlap. Interpolate at an auxiliary exponent on the opposite side of $p$ from $2$. For Schwartz $f$,
\begin{align*}
\|T_{j,K}f\|_p&\lesssim_p2^{-c_p|j|}\|f\|_p,\\
\sum_{k\in K}\mu_k*f&=\sum_{j\in\Z}T_{j,K}f,\\
\Bigl\|\sum_{k\in K}\mu_k*f\Bigr\|_p
&\le\sum_{j\in\Z}\|T_{j,K}f\|_p\lesssim_p\|f\|_p.
\end{align*}
The bound is uniform in $K$. Density completes the proof.
\end{proof}

Apply \Lemref{lem:uniform-annular-truncations-r42} using \eqref{eq:torsion-annular-multiplier-r40}--\eqref{eq:torsion-dual-quasinorm-r40} and the Taylor-frame support estimate. Here $\epsilon=1/d$. We obtain
\begin{align}\label{eq:torsion-uniform-lp-truncation-r40}
\sup_{0\le k_0\le k_1<\infty}
\left\|\sum_{k=k_0}^{k_1}\mu_k*f\right\|_{L^p}
&\le C_{p,\gamma}\|f\|_{L^p},\qquad1<p<\infty.
\end{align}
For $p=2$ we also have
\begin{align*}
\sup_\xi\sum_{k\ge0}|\widehat\mu_k(\xi)|\lesssim_\gamma1,
\qquad\sup_{k_0\le k_1}\Bigl\|\sum_{k=k_0}^{k_1}\mu_k*f\Bigr\|_2\lesssim_\gamma\|f\|_2.
\end{align*}
Each partial end-annulus has $L^p\to L^p$ norm at most $2\log2$ by Minkowski's inequality. Thus \eqref{eq:torsion-uniform-lp-truncation-r40} gives
\begin{align}\label{eq:torsion-truncation-lp-r40}
\sup_{0<\varepsilon<\rho\le1}
\left\|\int_{\varepsilon\le|t|\le\rho}
 f(\cdot-\gamma(t))\,\frac{dt}{t}\right\|_{L^p}
\le C_{p,\gamma}\|f\|_{L^p},\qquad 1<p<\infty.
\end{align}
\par\Needspace{3\baselineskip}\smallskip\noindent\textit{Stopping-time localization.} If $Q$ is a Taylor-frame adapted dyadic box of parameter length $2^q$ and $2^{q-N}\le1$, then choosing $N=N_{\mathrm{loc}}(\gamma)$ sufficiently large gives
\begin{align}\label{eq:torsion-support-stopping-r40}
|\langle\gamma(t),e_j\rangle|&\le C_\gamma|t|^j
\le\tfrac16\operatorname{side}_j(Q),\qquad |t|\le2^{q-N},\\
\operatorname{supp}\bigl(A_{\gamma,2^{q-N}}(f\mathbf1_{\frac13Q})\bigr)&\subseteq Q.\notag
\end{align}
Let $A_{\gamma,2^{q-N}}$ have annulus $2^{q-N-1}<|t|\le2^{q-N}$, where $2^{q-N}\le1$. The central-third partition in \Lemref{lem:corrected-grid} gives, for compactly supported $f$,
\begin{align}\label{eq:torsion-stopping-grid-reconstruction-r41}
f&=\sum_{\mathbf a\in\{0,1,2\}^d}
\sum_{Q\in\mathscr D_{\mathbf a}(q)}f\mathbf1_{\frac13Q}
\quad\text{a.e.},\\
A_{\gamma,2^{q-N}}f
&=\sum_{\mathbf a\in\{0,1,2\}^d}
\sum_{Q\in\mathscr D_{\mathbf a}(q)}
A_{\gamma,2^{q-N}}\bigl(f\mathbf1_{\frac13Q}\bigr).
\notag
\end{align}
By \eqref{eq:torsion-support-stopping-r40}, each summand is supported in its $Q$. The full $3^d$ shifts reconstruct $f$ in $L^p$. For $q_0\le q\le q_1$, the sum of annuli has endpoints $2^{q_0-N-1}$ and $2^{q_1-N}$, with at most two partial end-annuli. Equation \eqref{eq:torsion-truncation-lp-r40} yields
\begin{align}\label{eq:torsion-stopping-scale-interface-r41}
\left\|\sum_{q=q_0}^{q_1}A_{\gamma,2^{q-N}}f\right\|_{L^p}
\le C_{p,\gamma}\|f\|_{L^p},\qquad1<p<\infty.
\end{align}
We apply the stopping decomposition from \cite[Section~2, Lemma~2.3]{CladekOu2018}. Its four requirements are given by \eqref{eq:torsion-support-stopping-r40}, \eqref{eq:torsion-truncation-lp-r40}, \eqref{eq:torsion-physical-scaling-r39} and \Lemref{lem:corrected-grid}. After normalizing the parent averages to $1$, the good and bad parts on $Q_0$ satisfy
\begin{align*}
\supp g_i&\subseteq Q_0,\qquad \|g_i\|_\infty\lesssim1,\\
\|g_1\|_{s'}&\lesssim|Q_0|^{1/s'},\qquad
\|g_2\|_{r'}\lesssim|Q_0|^{1/r'},\\
\|b_1\|_r&\lesssim|Q_0|^{1/r},\qquad
\|b_2\|_s\lesssim|Q_0|^{1/s}.
\end{align*}
The $L^{s'}$ and $L^{r'}$ truncated bounds therefore control the mixed terms:
\begin{align}\label{eq:torsion-stopping-mixed-r40}
\|T_{\varepsilon,\rho}g_1\|_{s'}\|b_2\|_s
&\lesssim_{s,\gamma}|Q_0|,\\
\|b_1\|_r\|T_{\varepsilon,\rho}^*g_2\|_{r'}
&\lesssim_{r,\gamma}|Q_0|,\notag
\end{align}
Here $T_{\varepsilon,\rho}$ denotes symmetric truncation. The three stopping-time terms are controlled by
\begin{align*}
\text{good--good}&:\quad\eqref{eq:torsion-truncation-lp-r40}\ (p=2),\\
\text{mixed}&:\quad\eqref{eq:torsion-stopping-mixed-r40},\\
\text{bad--bad}&:\quad\eqref{eq:torsion-physical-scaling-r39}.
\end{align*}
All bounds are uniform in the annular scales. Each shift $\mathbf a$ produces an $\eta$-sparse family $\Sc_{\mathbf a}$. For a finite subcollection $\mathcal F\subseteq\bigcup_{\mathbf a}\Sc_{\mathbf a}$,
\begin{align}\label{eq:torsion-finite-sparse-merge-r42}
\sum_{Q\in\mathcal F}|Q|
&\le\eta^{-1}\sum_{\mathbf a}\left|\bigcup_{Q\in\mathcal F\cap\Sc_{\mathbf a}}Q\right|
\le\frac{3^d}{\eta}\left|\bigcup_{Q\in\mathcal F}Q\right|.
\end{align}
We use the measure-theoretic Hall theorem of Bollob\'as and Varopoulos \cite{BollobasVaropoulos1975}: if $(X,\mu)$ is atomless, $\{E_i\}_{i\in I}$ is a countable family of measurable sets of finite measure, and $a_i\ge0$ satisfy
\begin{align}\label{eq:nonatomic-hall-condition-r43}
\sum_{i\in I_0}a_i\le\mu\left(\bigcup_{i\in I_0}E_i\right)
\qquad\text{for every finite }I_0\subseteq I,
\end{align}
The theorem yields pairwise disjoint $F_i\subseteq E_i$, with $\mu(F_i)=a_i$. For $E_i=Q$ and $a_i=\eta3^{-d}|Q|$, \eqref{eq:torsion-finite-sparse-merge-r42} gives
\begin{align*}
\sum_{Q\in\mathcal F}a_Q&\le\biggl|\bigcup_{Q\in\mathcal F}Q\biggr|,\\
E_Q&\subseteq Q,\qquad |E_Q|=\eta3^{-d}|Q|,\qquad E_Q\cap E_{Q'}=\varnothing\ (Q\ne Q').
\end{align*}
Thus the countable union is $\eta3^{-d}$-sparse. Discarding repeated boxes costs at most $3^d$. The finite-scale stopping argument of \cite[Sections~2--3]{CladekOu2018} yields \eqref{eq:theoremB} after passing to the limit. Indeed, for $f\in C_c^\infty$,
\begin{align*}
|f(x-\gamma(t))-f(x-\gamma(-t))|&\lesssim_f |t|,\\
\left|\int_0^1\frac{f(x-\gamma(t))-f(x-\gamma(-t))}{t}\,dt\right|&<\infty.
\end{align*}
The uniform bound \eqref{eq:torsion-truncation-lp-r40} and density extend the principal-value operator to every $L^p$, $1<p<\infty$.

In the fixed Taylor frame, $\alpha_i=i$. \Thmref{thm:B} and \eqref{eq:adapted-continuity-r33} give
\begin{align*}
\|\Lambda\|_{L^p\times L^{p'}\to\C}\lesssim_{\gamma,p,r,s}1,\qquad r<p<s'.
\end{align*}
Now \Propref{prop:adapted-lifting} proves \eqref{eq:main-torsion} and the asymmetric dyadic bound.
\end{proof}

\medskip

\begin{proof}[Proof of \Corref{cor:matrix}]
The hypotheses $1\le r<p<s'$ are exactly the hypotheses needed in \Propref{prop:B}. Let $\overrightarrow H$ denote the corresponding operator $\Hvec$ or $\Hvecloc$. Let
\begin{align*}
E_m=\{x\in B(0,m):\|W(x)^{1/p}\|_{\op}
+\|W(x)^{-1/p}\|_{\op}\le m\}.
\end{align*}
As $m\to\infty$, $\mathbf1_{E_m}\to1$ almost everywhere. Truncating in space and magnitude gives the dense test classes
\begin{align*}
&\{\vecf\in L_c^\infty:W^{1/p}\vecf\in L_c^\infty\}\subseteq L^p(W),\\
&\{\vecG\in L_c^\infty:W^{-1/p}\vecG\in L_c^\infty\}
\subseteq L^{p'}(W^{-p'/p}).
\end{align*}
We prove the inequality on these classes and then pass to the closure. By \eqref{eq:duality}, normalize the dual function by
\begin{align}\label{eq:Gnorm}
        \|\vecG\|_{L^{p'}(W^{-p'/p})}=1
\end{align}
and prove
\begin{align}\label{eq:goal-matrix}
        |\langle \overrightarrow{H}\vecf,\vecG\rangle|
        \le C[W]_{p,r,s}^{\frac{s'-p}{s'(p-r)}+\frac1{p'}}\|\vecf\|_{L^p(W)}.
\end{align}
Define
\begin{align}\label{eq:F-G0}
        \vecF\coloneqq W^{1/p}\vecf,
        \qquad
        \vecGz\coloneqq W^{-1/p}\vecG.
\end{align}
Then
\begin{align}\label{eq:FGnorms}
        \|\vecF\|_{L^p}=\|\vecf\|_{L^p(W)},
        \qquad
        \|\vecGz\|_{L^{p'}}=\|\vecG\|_{L^{p'}(W^{-p'/p})}=1.
\end{align}
Apply the asymmetric dyadic estimate \eqref{eq:adapted-asymmetric-lifting} to $\vecf,\vecG$. It produces one sparse collection $\Sc_\star\subseteq\mathscr D_\star$ satisfying
\begin{align}\label{eq:use-cb}
|\langle\overrightarrow H\vecf,\vecG\rangle|
&\le C\sum_{Q\in\Sc_\star}|Q|\,
\llangle\vecf\rrangle_{r,Q^+}\circ\llangle\vecG\rrangle_{s,Q}\\
&=C\sum_{Q\in\Sc_\star}|Q|\,
\llangle W^{-1/p}\vecF\rrangle_{r,Q^+}\circ
\llangle W^{1/p}\vecGz\rrangle_{s,Q}.\notag
\end{align}
Using \Propref{prop:B} and \eqref{eq:FGnorms}, we have
\begin{align}\label{eq:embed-finish}
|\langle\overrightarrow H\vecf,\vecG\rangle|
&\le C[W]_{p,r,s}^{\frac{s'-p}{s'(p-r)}+\frac1{p'}}
\|\vecF\|_{L^p}\|\vecGz\|_{L^{p'}}\\
&=C[W]_{p,r,s}^{\frac{s'-p}{s'(p-r)}+\frac1{p'}}
\|\vecf\|_{L^p(W)}.\notag
\end{align}
This proves \eqref{eq:goal-matrix}. Taking the supremum over all $\vecG$ satisfying \eqref{eq:Gnorm} proves \eqref{eq:matrix-main} for the corresponding operator.
\end{proof}

\begin{proof}[Proof of \Corref{cor:sharpness}]
The sharpness proof has two parts: produce a divergent scalar pairing and bound all sparse forms on the same tests. For the monomial curve, choose $j$ with $\varepsilon_j\ne\varepsilon'_j$, let $\eta(t)=\gamma(t)$ for $t>0$, and take $L(z)=\varepsilon_jz_j$. Then
\begin{align*}
\det\bigl(\eta'(t),\ldots,\eta^{(d)}(t)\bigr)&\ne0\quad(t>0),\\
L(\eta(t))&=t^{\alpha_j}>0,\qquad
\|u_k\|_{L^r}=\|v_k\|_{L^s}=1.
\end{align*}
By \Lemref{lem:separated-knapp-r35}, the positive contribution satisfies
\begin{align}\label{eq:sharpness-monomial-tests-r34}
B_k\coloneqq\int_I\int_{\R^d}u_k(z-\gamma(t))v_k(z)\,dz\,\frac{dt}{t}\longrightarrow\infty.
\end{align}
The fixed support separation eliminates the opposite branch:
\begin{align*}
t<0&\Longrightarrow L(\gamma(t))<0,\\
L(F-E)>0&\Longrightarrow
\langle u_k(\cdot-\gamma(t)),v_k\rangle=0,\\
|\langle H_\gamma u_k,v_k\rangle|&\ge B_k\longrightarrow\infty.
\end{align*}

For a local curve, apply the same lemma to the limiting polynomial $\eta=\Gamma_0$ in \eqref{eq:torsion-moment-limit-r34}, with $L(z)=z_1$. Its nonzero diagonal coefficients give nonvanishing torsion, and $\eta_1(t)=\gamma_1'(0)t>0$ on $(1/2,1)$. We obtain nonnegative normalized functions $u_k,v_k$, supported in fixed bounded sets $E,F$ with $\inf_{z\in F-E}z_1>0$, and
\begin{align}\label{eq:sharpness-one-scale-moment-r34}
B_k\coloneqq\int_I\int_{\R^d}u_k(z-\Gamma_0(t))v_k(z)\,dz\,\frac{dt}{t}\longrightarrow\infty.
\end{align}
In both cases the supports are separated by a positive distance. The lemma proves the one-scale necessity directly from the three Knapp tests, without requiring the separated-support assertion from \cite[Section~4.2]{CladekOu2018} for a self-intersecting curve.

\par\Needspace{3\baselineskip}\smallskip\noindent\textit{Uniform sparse bound.} For either choice of $u_k,v_k$, write $\theta\coloneqq1/r+1/s-1>0$ and $A_\gamma\coloneqq\alpha_1+\cdots+\alpha_d$, where $\alpha_i=i$ in the local case. For every $\delta$-sparse collection $\Sc$ of genuine $\gamma$-cubes or their comparable dyadic approximations, only boxes meeting both $E$ and $F$ contribute. Define
\begin{align*}
\Sc_j(E,F)\coloneqq\{Q\in\Sc:2^j\le\ell(Q)<2^{j+1},\ Q\cap E\ne\varnothing,\ Q\cap F\ne\varnothing\}.
\end{align*}
Since $\operatorname{dist}(E,F)>0$, every $Q\in\Sc_j(E,F)$ has $\ell(Q)\ge c_{E,F,\gamma}>0$. We shall prove the uniform cardinality bound $\#\Sc_j(E,F)\lesssim_{E,F,\gamma,\delta}1$ and hence
\begin{align}\label{eq:sparse-sharpness-bound-r14}
\sum_{Q\in\Sc_j(E,F)}
|Q|\langle u_k\rangle_{r,Q}\langle v_k\rangle_{s,Q}
&\le\|u_k\|_{L^r}\|v_k\|_{L^s}
\sum_{Q\in\Sc_j(E,F)}|Q|^{-\theta}
\le C_{E,F,\gamma,\delta}\,2^{-jA_\gamma\theta}.
\end{align}
At each scale, the contributing boxes lie in an adapted box of volume $O_{E,F,\gamma}(2^{jA_\gamma})$. If $E_Q$ are the disjoint sparse witnesses, then
\begin{align*}
\delta\sum_{Q\in\Sc_j(E,F)}|Q|
\le\sum_{Q\in\Sc_j(E,F)}|E_Q|
\lesssim_{E,F,\gamma}2^{jA_\gamma}.
\end{align*}
Since $|Q|\eqsim_\gamma2^{jA_\gamma}$, at most $C_{E,\gamma}/\delta$ boxes contribute. This is also true for comparable dyadic approximations. Summing the geometric bound in \eqref{eq:sparse-sharpness-bound-r14} gives
\begin{align}\label{eq:sparse-sharpness-uniform-r14}
\sup_{k}\sup_{\Sc\text{ $\delta$-sparse}}\Lambda_{\Sc,r,s}(u_k,v_k)
\le C_{E,F,\gamma,r,s,\delta}.
\end{align}

For a monomial curve we take $f_k=u_k$ and $g_k=v_k$. For a local curve, define $A\coloneqq\sum_{i=1}^d i$ and, for $0<\lambda\le1$, set
\begin{align}\label{eq:sharpness-rescaled-tests-r34}
f_{k,\lambda}(x)&\coloneqq\lambda^{-A/r}u_k(\delta_\lambda^{-1}x),
&g_{k,\lambda}(x)&\coloneqq\lambda^{-A/s}v_k(\delta_\lambda^{-1}x).
\end{align}
The Jacobian of $\delta_\lambda$ is $\lambda^A$. Thus $\|f_{k,\lambda}\|_{L^r}=\|g_{k,\lambda}\|_{L^s}=1$. Since $\delta_\lambda$ maps genuine adapted cubes to genuine adapted cubes and preserves their sparsity parameter, the change of variables in the sparse form and \eqref{eq:sparse-sharpness-uniform-r14} give
\begin{align}\label{eq:sharpness-rescaled-sparse-r34}
\sup_{\Sc\text{ $\delta$-sparse}}\Lambda_{\Sc,r,s}(f_{k,\lambda},g_{k,\lambda})
\le C_{E,F,\gamma,r,s,\delta}\lambda^{-A\theta}.
\end{align}
For nonhomogeneous dyadic approximations, the same counting is applied directly to the scaled supports. The first-coordinate separation gives
\begin{align*}
Q\cap\delta_\lambda E\ne\varnothing,\quad
Q\cap\delta_\lambda F\ne\varnothing
&\Longrightarrow \ell(Q)\gtrsim\lambda,\\
2^j\gtrsim\lambda&\Longrightarrow
\#\Sc_j(\delta_\lambda E,\delta_\lambda F)
\lesssim_{\gamma,\delta}1.
\end{align*}
Consequently,
\begin{align}\label{eq:sharpness-dyadic-direct-r35}
\sup_{\Sc\text{ $\delta$-sparse dyadic}}\Lambda_{\Sc,r,s}(f_{k,\lambda},g_{k,\lambda})
\lesssim\sum_{2^j\gtrsim\lambda}2^{-jA\theta}\lesssim\lambda^{-A\theta}.
\end{align}

\par\Needspace{3\baselineskip}\smallskip\noindent\textit{The local pairing.} To estimate the local Hilbert form, let
\begin{align}\label{eq:sharpness-positive-convergence-r34}
B_k(\lambda)\coloneqq\int_I\int_{\R^d}u_k(z-\Gamma_\lambda(t))v_k(z)\,dz\,\frac{dt}{t}.
\end{align}
By the $C^d$ convergence in \eqref{eq:torsion-moment-limit-r34} and translation continuity in $L^1$, we have $B_k(\lambda)\to B_k$ as $\lambda\to0$ for each fixed $k$. The integrand over positive parameters is nonnegative. We next control the negative-parameter contribution. Since $\gamma_1'(0)>0$ in the Taylor frame, $\gamma_1(t)<0$ for $-\varepsilon_0<t<0$ and a fixed $\varepsilon_0>0$. As the first coordinate of $F-E$ is positive, the supports of $f_{k,\lambda}(\cdot-\gamma(t))$ and $g_{k,\lambda}$ are disjoint for these $t$. On $[-1,-\varepsilon_0]$, the torsion hypothesis implies $\gamma'(t)\ne0$. A finite cover by intervals on which some coordinate $\gamma_i$ is strictly monotone, with $|\gamma_i'|$ bounded below, gives
\begin{align}\label{eq:sharpness-far-negative-measure-r34}
\bigl|\{t\in[-1,-\varepsilon_0]:\gamma(t)\in\delta_\lambda(F-E)\}\bigr|
\le C_{E,F,\gamma}\lambda.
\end{align}
Indeed, the $i$th coordinate of $\delta_\lambda(F-E)$ has length at most $C_{E,F}\lambda^i\le C_{E,F}\lambda$, and a monotone coordinate with derivative bounded below controls the length of its preimage. It follows that the entire negative-parameter contribution, after extracting $\lambda^{-A\theta}$, has absolute value at most $C_k\lambda$, where $C_k$ may depend on $\|u_k\|_{L^1}\|v_k\|_{L^\infty}$ but not on $\lambda$.

Choose $\lambda_k\downarrow0$ sufficiently small that $B_k(\lambda_k)\ge B_k/2$ and $C_k\lambda_k\le B_k/4$. We then obtain
\begin{align}\label{eq:sharpness-local-pairing-r34}
\bigl|\langle H_{\gamma,\mathrm{loc}}f_{k,\lambda_k},g_{k,\lambda_k}\rangle\bigr|
&\ge\lambda_k^{-A\theta}\bigl(B_k(\lambda_k)-C_k\lambda_k\bigr)
\ge\frac14\lambda_k^{-A\theta}B_k.
\end{align}
The estimate allows the images of positive and negative parameter intervals to intersect away from the origin. Combining \eqref{eq:sharpness-rescaled-sparse-r34} with \eqref{eq:sharpness-local-pairing-r34} and $B_k\to\infty$ rules out a fixed finite sum of scalar sparse forms in the local case. In the monomial case the same contradiction follows from \eqref{eq:sharpness-monomial-tests-r34} and \eqref{eq:sparse-sharpness-uniform-r14}.

Finally, embed the scalar functions as $\vecf=f e_1$ and $\vecg=g e_1$. For every adapted box $Q$,
\begin{align}\label{eq:one-coordinate-cb}
\llangle f e_1\rrangle_{r,Q}\circ\llangle g e_1\rrangle_{s,Q}
=\langle f\rangle_{r,Q}\langle g\rangle_{s,Q}.
\end{align}
Thus a uniform convex body estimate involving any fixed number $J$ of sparse families would imply the corresponding scalar estimate, contrary to the preceding constructions. This proves the corollary for both classes of curves.
\end{proof}

\section{Proofs of the auxiliary results}\label{sec:auxiliary-estimates}

We prove the grid lemma, the Knapp tests, the convex body lifting, the endpoint obstruction and the matrix embedding, in that order.
\begin{proof}[Proof of \Lemref{lem:corrected-grid}]
We first verify nestedness. For $a\in\{0,1,2\}$, the intervals $I^a_{\ell,m}$ partition $\R$, and their endpoints satisfy
\begin{align}\label{eq:alternating-nesting-proof}
2^{\ell+1}\bigl(m+(-1)^{\ell+1}a/3\bigr)
=2^\ell\bigl(2m-2(-1)^\ell a/3\bigr)
=2^\ell\bigl((2m-(-1)^\ell a)+(-1)^\ell a/3\bigr).
\end{align}
The endpoint identity gives nested intervals, hence nested Cartesian products. The remaining scale properties are
\begin{align*}
\nu_j(k)&\nearrow\infty\quad(k\to\infty),\qquad
\nu_j(k)\searrow-\infty\quad(k\to-\infty),\\
\nu_1(k+1)-\nu_1(k)&=1,
\end{align*}
and the parent-volume ratio is
\begin{align}\label{eq:adapted-parent-ratio}
\frac{|Q^{(1)}|}{|Q|}
=2^{\sum_{j=1}^d(\nu_j(k+1)-\nu_j(k))}
\le 2^{\sum_{j=1}^d\lceil\alpha_j\rceil},
\qquad Q\in\mathscr D_{\mathbf a}(k).
\end{align}
Here $Q^{(1)}$ is the unique box of generation $k+1$ containing $Q$.

For a fixed $\ell$, the central third of $I^a_{\ell,m}$ is
\begin{align}\label{eq:central-third-identity}
\tfrac13 I^a_{\ell,m}
=2^\ell\bigl[m+(-1)^\ell a/3+1/3,\,
 m+(-1)^\ell a/3+2/3\bigr).
\end{align}
As $a$ runs through $0,1,2$, these are precisely the consecutive intervals of length $2^\ell/3$, up to relabeling $m$. Their Cartesian products therefore partition $\R^d$ when $\mathbf a$ runs through $\{0,1,2\}^d$ at any fixed $k$. Finally,
\begin{align}\label{eq:adapted-scale-sandwich}
2^{-1}2^{k\alpha_j}<2^{\nu_j(k)}\le2^{k\alpha_j}
\end{align}
shows that every box has bounded eccentricity relative to the $\gamma$-cube of scale $2^k$ with the same center. This proves all the assertions.
\end{proof}

For $\mathscr D_\star=\mathscr D_{(1,\ldots,1)}$, the boxes containing the origin expand in every coordinate as $k\to\infty$. More precisely, for each such box $P_k$,
\begin{align*}
\operatorname{dist}(0,\partial P_k)\ge\tfrac13\min_{1\le j\le d}\operatorname{side}_j(P_k)
\longrightarrow\infty.
\end{align*}
Hence every compact support lies in one root $P_k$, as required in \Propref{prop:adapted-lifting}.

\subsection{Proof of \Lemref{lem:separated-knapp-r35}: three one-scale tests}\label{subsec:knapp-proof-r35}
\begin{proof}[Proof of \Lemref{lem:separated-knapp-r35}]
Write $x\coloneqq1/r$, $y\coloneqq1/s'$ and $A\coloneqq d(d+1)/2$. For $0<x,y<1$ and $x>y$, membership in $\overline{\Oc_d}$ is equivalent to
\begin{align}\label{eq:knapp-three-edges-r35}
y\ge\frac{d-1}{d}x,\qquad
y\ge\frac{dx-1}{d-1},\qquad
x-y\le\frac1A.
\end{align}
Indeed, the first and third boundary lines meet at the lower vertex $(2/(d+1),2(d-1)/(d(d+1)))$, and the second and third meet at $((d^2-d+2)/(d(d+1)),(d-1)/(d+1))$. Together with $(0,0)$ and $(1,1)$, they delimit exactly the polygon in \eqref{eq:trapezoid}. Thus at least one inequality in \eqref{eq:knapp-three-edges-r35} is violated.

Choose an interval $I\Subset(1/2,1)$ containing $t_0$ so short that $L(\eta(t))\ge 3c>0$ for $t\in I$. Write $a\coloneqq\eta(t_0)$. All the sets below lie in fixed small neighborhoods of $0$ and $a$, and we can choose their sizes so that $L(F-E)\ge c$ and $\operatorname{dist}(E,F)>0$. We write $N_\delta(S)$ for the Euclidean $\delta$-neighborhood of a set $S$. A bounded smooth curve segment has
\begin{align}\label{eq:knapp-tube-volume-r35}
|N_{2\delta}(\eta(I))|+|N_{2\delta}(a-\eta(I))|\lesssim_{\eta,I}\delta^{d-1},\qquad 0<\delta<\delta_0.
\end{align}
This follows by covering the parameter interval by $O(\delta^{-1})$ subintervals whose images have diameter $O(\delta)$.

\par\Needspace{3\baselineskip}\smallskip\noindent\textit{First boundary.} If $dy<(d-1)x$, take $u_\delta\coloneqq\mathbf1_{N_{2\delta}(a-\eta(I))}$ and $v_\delta\coloneqq\mathbf1_{B(a,\delta)}$. For $z\in B(a,\delta)$ and $t\in I$, we have $z-\eta(t)\in N_{2\delta}(a-\eta(I))$. Since $1/t\eqsim1$ on $I$, we obtain
\begin{align}\label{eq:knapp-tube-first-r35}
\frac{\displaystyle\int_I\!\int u_\delta(z-\eta(t))v_\delta(z)\,dz\,\frac{dt}{t}}{\|u_\delta\|_{L^r}\|v_\delta\|_{L^s}}
\gtrsim\frac{\delta^d}{\delta^{(d-1)/r}\delta^{d/s}}
=\delta^{dy-(d-1)x}\longrightarrow\infty.
\end{align}

\par\Needspace{3\baselineskip}\smallskip\noindent\textit{Second boundary.} If $1-dx+(d-1)y<0$, take $u_\delta\coloneqq\mathbf1_{B(0,\delta)}$ and $v_\delta\coloneqq\mathbf1_{N_{2\delta}(\eta(I))}$. For $z\in B(0,\delta)$ and $t\in I$, the point $z+\eta(t)$ lies in the latter neighborhood. Fubini's theorem then gives
\begin{align}\label{eq:knapp-tube-dual-r35}
\frac{\displaystyle\int_I\!\int u_\delta(z-\eta(t))v_\delta(z)\,dz\,\frac{dt}{t}}{\|u_\delta\|_{L^r}\|v_\delta\|_{L^s}}
\gtrsim\frac{\delta^d}{\delta^{d/r}\delta^{(d-1)/s}}
=\delta^{1-dx+(d-1)y}\longrightarrow\infty.
\end{align}

\par\Needspace{3\baselineskip}\smallskip\noindent\textit{Improving boundary.} If $x-y>1/A$, the nonzero determinant $\det(\eta'(t_0),\ldots,\eta^{(d)}(t_0))$ shows that the vectors $v_j\coloneqq\eta^{(j)}(t_0)/j!$, $1\le j\le d$, form a basis. In this basis let
\begin{align}\label{eq:knapp-anisotropic-test-r35}
D_\delta\coloneqq\left\{\sum_{j=1}^d v_j z_j:|z_j|<\delta^j\right\},\quad
u_\delta\coloneqq\mathbf1_{3D_\delta},\quad
v_\delta\coloneqq\mathbf1_{a+D_\delta},\quad
|D_\delta|\eqsim_{\eta}\delta^A.
\end{align}
Taylor's formula in $C^d$ implies $\eta(t_0+h)-a=\sum_{j=1}^dv_jh^j+o(|h|^d)$. For $|h|\le c_0\delta$ and sufficiently small fixed $c_0>0$, this difference belongs to $D_\delta$. Thus $z-\eta(t_0+h)\in3D_\delta$ whenever $z\in a+D_\delta$, and
\begin{align}\label{eq:knapp-anisotropic-lower-r35}
\frac{\displaystyle\int_I\!\int u_\delta(z-\eta(t))v_\delta(z)\,dz\,\frac{dt}{t}}{\|u_\delta\|_{L^r}\|v_\delta\|_{L^s}}
\gtrsim\frac{\delta|D_\delta|}{|D_\delta|^{1/r+1/s}}
\eqsim\delta^{1-A(x-y)}\longrightarrow\infty.
\end{align}

Choose $\delta_k\downarrow0$ in the applicable case and normalize the two indicator functions by their $L^r$ and $L^s$ norms. Their fixed support neighborhoods can be chosen simultaneously after fixing $I$, and each construction has the required separation. This proves \eqref{eq:knapp-unnormalized-r35}.
\end{proof}

\subsection{Proof of \Propref{prop:adapted-lifting}: asymmetric randomization}\label{subsec:adapted-lifting-proof}
\begin{proof}[Proof of \Propref{prop:adapted-lifting}]
We prove the dyadic lifting in five steps. Set $A_\gamma^0\coloneqq\sum_{i=1}^d\alpha_i$. First, the dyadic and enlarged volumes satisfy
\begin{align}\label{eq:adapted-lift-volume}
|Q|\eqsim_\gamma 2^{k A_\gamma^0},\qquad
|Q^+|=A_\gamma^{A_\gamma^0}2^{k A_\gamma^0}
\eqsim_\gamma|Q|,\qquad Q\in\mathscr D_\star(k).
\end{align}

We first fix the enlargement constant in a way compatible with every generation. For $Q\in\mathscr D_\star(k)$ write $L_i(k)\coloneqq 2^{\nu_i(k)}$ for its $i$th side-length. Since all $\alpha_i\ge1$ after the normalization, fix $A_\gamma\ge8$. Then, for $1\le i\le d$,
\begin{align}\label{eq:adapted-dilation-conditions}
A_\gamma^{\alpha_i}\ge 1+2^{\alpha_i+1},\qquad
A_\gamma^{\alpha_i}(1-2^{-\alpha_i})\ge1.
\end{align}
If $Q\subseteq P$ are boxes of generations $k\le h$, then the nesting of the coordinate intervals gives
\begin{align}\label{eq:adapted-center-nesting}
2|c_i(Q)-c_i(P)|\le L_i(h)-L_i(k),\qquad 1\le i\le d.
\end{align}
For $k<h$, the scale difference and \eqref{eq:adapted-dilation-conditions} imply
\begin{align}\label{eq:adapted-plus-nesting}
L_i(h)-L_i(k)
&\le 2^{h\alpha_i}
\le A_\gamma^{\alpha_i}(2^{h\alpha_i}-2^{k\alpha_i}),\\
Q^+&\subseteq P^+.\notag
\end{align}
The inclusion is immediate when $k=h$, since then $Q=P$.

\par\Needspace{3\baselineskip}\smallskip\noindent\textbf{Step 1. Reduction of the scalar sparse form to one grid.}
For each genuine adapted box $R$ of scale $\ell$, choose a generation $k$ with $2^k\le\ell<2^{k+1}$. Let $\mathcal F(R)$ be the boxes of generation $k$ that meet $R$. This family covers $R$ and has at most $M_\gamma$ members, by \eqref{eq:adapted-scale-sandwich}. For every $Q\in\mathcal F(R)$, the overlap of the coordinate intervals implies
\begin{align}\label{eq:adapted-genuine-cover}
2|c_i(R)-c_i(Q)|\le\ell^{\alpha_i}+L_i(k),\qquad
2|c_i(R)-c_i(Q)|+\ell^{\alpha_i}
\le (2^{\alpha_i+1}+1)2^{k\alpha_i}
\le A_\gamma^{\alpha_i}2^{k\alpha_i}.
\end{align}
Thus $R\subseteq Q^+$ for every $Q\in\mathcal F(R)$. Choose $Q_R\in\mathcal F(R)$ maximizing $\langle g\rangle_{s,Q}$. Since $|Q_R|\eqsim_\gamma|R|$ and $R$ is covered by the bounded family $\mathcal F(R)$, we obtain
\begin{align}\label{eq:adapted-scalar-reduction}
|R|\langle f\rangle_{r,R}\langle g\rangle_{s,R}
&\le C_{\gamma,r,s}|R|
\langle f\rangle_{r,Q_R^+}\langle g\rangle_{s,Q_R}.
\end{align}
Let $\mathcal S_0$ be an $\eta$-sparse family of genuine boxes for the scalar form. For its witnesses $E_R\subseteq R$, we have $|E_R|\ge\eta|R|$. Define $\mathcal T=\{Q_R:R\in\mathcal S_0\}\subseteq\mathscr D_\star$. If $Q_R=P$, then $R\subseteq P^+$ and
\begin{align}\label{eq:adapted-multiplicity}
\sum_{R\in\mathcal S_0:Q_R=P}|R|
&\le\eta^{-1}\sum_{R:Q_R=P}|E_R|
\le\eta^{-1}|P^+|
\lesssim_\gamma\eta^{-1}|P|.
\end{align}
Sum all scalar coefficients assigned to the same $P$ before using sparsity. Equations \eqref{eq:adapted-scalar-reduction} and \eqref{eq:adapted-multiplicity} give
\begin{align}\label{eq:adapted-multiplicity-form}
\sum_{\substack{R\in\mathcal S_0\\Q_R=P}}
|R|\langle f\rangle_{r,R}\langle g\rangle_{s,R}
\le C_{\gamma,r,s}\eta^{-1}|P|
\langle f\rangle_{r,P^+}\langle g\rangle_{s,P}.
\end{align}
For any dyadic $P$, if $Q_R\subseteq P$, then $R\subseteq Q_R^+\subseteq P^+$ by \eqref{eq:adapted-plus-nesting}. For each $Q\in\mathcal T$ select one genuine box $R(Q)\in\mathcal S_0$ with $Q_{R(Q)}=Q$. These representatives are distinct, have $|Q|\eqsim_\gamma|R(Q)|$, and possess pairwise disjoint witnesses inside $P^+$ whenever $Q\subseteq P$. Consequently,
\begin{align}\label{eq:adapted-carleson-transfer}
\sum_{Q\in\mathcal T:Q\subseteq P}|Q|
&\lesssim_\gamma\sum_{Q\in\mathcal T:Q\subseteq P}|R(Q)|
\le\eta^{-1}|P^+|
\lesssim_\gamma\eta^{-1}|P|.
\end{align}
Here $|Q_R|\eqsim_\gamma|R|$, and the representatives used in the Carleson estimate have their witnesses inside $P^+$. On a nonatomic measure space the Carleson condition in \eqref{eq:adapted-carleson-transfer} gives an $\eta_1$-sparse family $\mathcal T$ with $\eta_1>0$ depending only on $\gamma,\eta$. Thus every scalar pair obeys the asymmetric dyadic sparse bound
\begin{align}\label{eq:adapted-scalar-asymmetric}
|\Lambda(f,g)|\le C_{\gamma,r,s,\eta}
\sum_{Q\in\mathcal T}|Q|\langle f\rangle_{r,Q^+}
\langle g\rangle_{s,Q}.
\end{align}

\par\Needspace{3\baselineskip}\smallskip\noindent\textbf{Step 2. Uniform localization at a top box.}
Fix $P\in\mathscr D_\star$ and suppose that $f$ is supported in $P^+$ while $g$ is supported in $P$. The dyadic boxes in \eqref{eq:adapted-scalar-asymmetric} with a nonzero $g$ average either lie inside $P$ or contain it. For $\theta\coloneqq1/r+1/s-1>0$, the ancestors form a geometric tail,
\begin{align}\label{eq:adapted-ancestor-tail}
\sum_{Q\supsetneq P}|Q|\langle f\rangle_{r,Q^+}\langle g\rangle_{s,Q}
&\le C_{\gamma,r,s}\|f\|_{L^r(P^+)}\|g\|_{L^s(P)}
\sum_{Q\supsetneq P}|Q|^{-\theta}\\
&\lesssim_{\gamma,r,s}|P|\langle f\rangle_{r,P^+}
\langle g\rangle_{s,P}.\notag
\end{align}
Since $\alpha_1=1$, each strict ancestor has at least twice the preceding volume. Hence
\begin{align*}
|P^{(h)}|\ge2^h|P|,\qquad
\sum_{h\ge1}|P^{(h)}|^{-\theta}
\le\frac{2^{-\theta}}{1-2^{-\theta}}|P|^{-\theta}.
\end{align*}
Adding $P$ to the descendant Carleson family increases its Carleson constant by at most one. Thus \eqref{eq:adapted-ancestor-tail} is absorbed into the top-box term uniformly in $P$.

\par\Needspace{3\baselineskip}\smallskip\noindent\textbf{Step 3. A stopping family with controlled enlarged overlap.}
We first use the Auerbach reduction \cite[Lemma~2.1]{LaukkarinenLorist2026} on the convex averages over $P^+$ and $P$. We obtain an invertible complex matrix $A_P$ and transformed vector fields with components $f_j^P,g_j^P$ defined by
\begin{align}\label{eq:adapted-lift-au-basis}
\vecf^P\coloneqq A_P\vecf,\qquad \vecg^P\coloneqq(A_P^{-1})^{*}\vecg.
\end{align}
The coordinatewise pairing is unchanged:
\begin{align}\label{eq:adapted-lift-sesquilinear-invariance}
\sum_{j=1}^n\Lambda(f_j^P,g_j^P)=\sum_{j=1}^n\Lambda(f_j,g_j).
\end{align}
Apply the complex Auerbach reduction of \cite[Lemma~2.1]{LaukkarinenLorist2026} to $\widetilde\Lambda(f,h)\coloneqq\Lambda(f,\overline h)$, then conjugate $h$. The normalization is \eqref{eq:adapted-lift-auerbach}. The weak-type bounds for $M_r^\gamma,M_s^\gamma$ allow a threshold $C_{\gamma,n}$ with the stopping set
\begin{align}\label{eq:adapted-lift-stop-set}
E_P\coloneqq\bigcup_{j=1}^n
\left\{M_r^\gamma(f_j^P\mathbf1_{P^+})>
C_{\gamma,n}\langle f_j^P\rangle_{r,P^+}\right\}
\cup\bigcup_{j=1}^n
\left\{M_s^\gamma(g_j^P\mathbf1_P)>
C_{\gamma,n}\langle g_j^P\rangle_{s,P}\right\}.
\end{align}
Set $B_\gamma\coloneqq5A_\gamma$ and let $R^{++}$ have center $c_R$ and side-lengths $(B_\gamma2^{k(R)})^{\alpha_i}$. Choose the maximal dyadic descendants $R\subseteq P$ with $R^{++}\subseteq E_P$, denoted by $\mathcal F_P$. Thus
\begin{align*}
R\in\mathcal F_P&\Longrightarrow R\subseteq R^{++}\subseteq E_P,\\
R,R'\in\mathcal F_P,\ R\ne R'&\Longrightarrow R\cap R'=\varnothing.
\end{align*}
The desired packing estimate is
\begin{align}\label{eq:adapted-lift-packing}
\sum_{R\in\mathcal F_P}|R|\le|E_P|\le\tfrac12|P|.
\end{align}
We verify the enlarged overlap. Suppose $R,R'\in\mathcal F_P$ have generations $k<h$ and $R^+\cap(R')^+\ne\varnothing$. If $R^{(1)}$ denotes the immediate dyadic parent of $R$, then the coordinate-center estimates give
\begin{align}\label{eq:adapted-overlap-centers}
2|c_i(R^{(1)})-c_i(R')|
&\le A_\gamma^{\alpha_i}(2^{k\alpha_i}+2^{h\alpha_i})
+2^{(k+1)\alpha_i},\qquad 1\le i\le d.
\end{align}
We may take $L_\gamma\coloneqq2$ in the comparison of generations. Indeed, since $\alpha_i\ge1$ and $B_\gamma\ge40$,
\begin{align}\label{eq:adapted-overlap-gap}
\frac{A_\gamma^{\alpha_i}(1+2^{-2\alpha_i})
+(1+B_\gamma^{\alpha_i})2^{-\alpha_i}}{B_\gamma^{\alpha_i}}
&=5^{-\alpha_i}(1+2^{-2\alpha_i})
+2^{-\alpha_i}+(2B_\gamma)^{-\alpha_i}\\
&\le\frac14+\frac12+\frac1{80}<1.
\end{align}
If $h-k\ge L_\gamma$, this inequality shows, coordinate by coordinate, that
\begin{align}\label{eq:adapted-overlap-parent}
(R^{(1)})^{++}\subseteq (R')^{++}\subseteq E_P.
\end{align}
Since $R^{(1)}\subseteq P$, \eqref{eq:adapted-overlap-parent} contradicts the maximality of $R$. Hence, whenever $x\in R^+$ for some $R\in\mathcal F_P$, all such $R$ have generations differing by at most one:
\begin{align*}
\max\{k(R):R\in\mathcal F_P,\ x\in R^+\}
-\min\{k(R):R\in\mathcal F_P,\ x\in R^+\}\le1.
\end{align*}
At each generation, the coordinate lattice spacing is $L_i(k)>2^{k\alpha_i-1}$. Thus
\begin{align*}
\#\{R\in\mathcal F_P\cap\mathscr D_\star(k):x\in R^+\}
\le\prod_{i=1}^d(3+2A_\gamma^{\alpha_i}),
\end{align*}
and the two-generation bound yields
\begin{align}\label{eq:adapted-lift-bounded-overlap}
\sum_{R\in\mathcal F_P}\mathbf1_{R^+}
\le L_\gamma\prod_{i=1}^d(3+2A_\gamma^{\alpha_i})
\coloneqq C_\gamma.
\end{align}
We next control all dyadic $Q\subseteq P$ that remain outside the stopping boxes. For these $Q$ the required inequalities are
\begin{align}\label{eq:adapted-lift-ancestor-averages}
\langle f_j^P\rangle_{r,Q^+}\lesssim_{\gamma,n}
\langle f_j^P\rangle_{r,P^+},\qquad
\langle g_j^P\rangle_{s,Q}\lesssim_{\gamma,n}
\langle g_j^P\rangle_{s,P}.
\end{align}
The stopping definition provides a point $z$ such that
\begin{align*}
Q\not\subseteq\bigcup_{R\in\mathcal F_P}R
&\Longrightarrow Q^{++}\not\subseteq E_P
\Longrightarrow z\in Q^{++}\setminus E_P,\\
Q\cup Q^+&\subseteq B_\gamma(z,C_\gamma2^{k(Q)}),\qquad
|B_\gamma(z,C_\gamma2^{k(Q)})|\lesssim_\gamma|Q|.
\end{align*}
Consequently, for each $j$,
\begin{align*}
\langle f_j^P\rangle_{r,Q^+}
&\lesssim_\gamma M_r^\gamma(f_j^P\mathbf1_{P^+})(z)
\le C_{\gamma,n}\langle f_j^P\rangle_{r,P^+},\\
\langle g_j^P\rangle_{s,Q}
&\lesssim_\gamma M_s^\gamma(g_j^P\mathbf1_P)(z)
\le C_{\gamma,n}\langle g_j^P\rangle_{s,P}.
\end{align*}

\par\Needspace{3\baselineskip}\smallskip\noindent\textbf{Step 4. Randomization and convex bodies.}
Let $\varepsilon_R\in\{0,1\}$ be independent Bernoulli variables of equal probabilities for $R\in\mathcal F_P$. Set $G_P\coloneqq P\setminus\bigcup_{R\in\mathcal F_P}R$ and
\begin{align}\label{eq:adapted-lift-random-weights}
m_\varepsilon(x)&\coloneqq\mathbf1_{P^+}(x)
\prod_{R\in\mathcal F_P:x\in R^+}2\varepsilon_R,\\
n_\varepsilon(x)&\coloneqq\mathbf1_{G_P}(x)
+2\sum_{R\in\mathcal F_P}(1-\varepsilon_R)\mathbf1_R(x).
\end{align}
The bounded-overlap estimate gives
\begin{align*}
0\le m_\varepsilon\le2^{C_\gamma},\qquad0\le n_\varepsilon\le2.
\end{align*}
Independence and sesquilinearity yield the identity used in the iteration:
\begin{align}\label{eq:adapted-lift-random-identity}
\mathbb E\Lambda(m_\varepsilon f,n_\varepsilon g)
=\Lambda(f\mathbf1_{P^+},g\mathbf1_P)
-\sum_{R\in\mathcal F_P}
\Lambda(f\mathbf1_{R^+},g\mathbf1_R).
\end{align}
The purpose of randomization is to annihilate every descendant of a stopping box. By \eqref{eq:adapted-plus-nesting},
\begin{align*}
Q\subseteq R\in\mathcal F_P&\Longrightarrow Q^+\subseteq R^+,\\
\varepsilon_R=0&\Longrightarrow m_\varepsilon\mathbf1_{Q^+}=0,\\
\varepsilon_R=1&\Longrightarrow n_\varepsilon\mathbf1_Q=0,\\
\langle m_\varepsilon f_j^P\rangle_{r,Q^+}
\langle n_\varepsilon g_j^P\rangle_{s,Q}&=0.
\end{align*}
Fix $\varepsilon$ and $j$, and let $\mathcal T_{\varepsilon,j}$ be the scalar sparse family supplied by \eqref{eq:adapted-scalar-asymmetric}. Step~2 absorbs its ancestors into the term for $P$. For the remaining boxes, \eqref{eq:adapted-lift-ancestor-averages} and \eqref{eq:adapted-carleson-transfer} yield
\begin{align}\label{eq:adapted-random-local-carleson-r36}
&\sum_{\substack{Q\in\mathcal T_{\varepsilon,j},\,Q\subseteq P\\Q\not\subseteq R\ \text{for every }R\in\mathcal F_P}}
|Q|\langle m_\varepsilon f_j^P\rangle_{r,Q^+}
\langle n_\varepsilon g_j^P\rangle_{s,Q}\notag\\
&\quad\le C_{\gamma,n}\langle f_j^P\rangle_{r,P^+}\langle g_j^P\rangle_{s,P}
\sum_{\substack{Q\in\mathcal T_{\varepsilon,j}\\Q\subseteq P}}|Q|
\le C_{\gamma,n,r,s,\eta}|P|\langle f_j^P\rangle_{r,P^+}\langle g_j^P\rangle_{s,P}.
\end{align}
The bound holds for each $\varepsilon$ and every selected $\mathcal T_{\varepsilon,j}$. For a finite stopping family the Bernoulli identities are
\begin{align}\label{eq:adapted-random-cancellation-r36}
\mathbb E[m_\varepsilon(x)]&=\mathbf1_{P^+}(x),\\
2\mathbb E[(1-\varepsilon_R)m_\varepsilon(x)]
&=\mathbf1_{P^+\setminus R^+}(x),\qquad R\in\mathcal F_P.
\end{align}
For finite stopping families, \eqref{eq:adapted-random-cancellation-r36} gives \eqref{eq:adapted-lift-random-identity}. The passage to countable families follows from $0\le m_\varepsilon\le2^{C_\gamma}$, $0\le n_\varepsilon\le2$ and
\begin{align}\label{eq:adapted-random-tail-absolute}
\sum_{R\in\mathcal F_P}
|\Lambda(f\mathbf1_{R^+},g\mathbf1_R)|
&\le C_{\gamma,p}\|\Lambda\|\|f\|_{\infty}\|g\|_{\infty}
\sum_{R\in\mathcal F_P}|R|<\infty.
\end{align}
The right side is finite by \eqref{eq:adapted-lift-packing}. Dominated convergence in $L^p\times L^{p'}$ extends \eqref{eq:adapted-lift-random-identity} to the full stopping collection.

For the Auerbach coordinates \eqref{eq:adapted-lift-au-basis}, the normalization gives
\begin{align}\label{eq:adapted-lift-auerbach}
\sum_{j=1}^n\langle f_j^P\rangle_{r,P^+}
\langle g_j^P\rangle_{s,P}
\le n\llangle\vecf\rrangle_{r,P^+}\circ
\llangle\vecg\rrangle_{s,P}.
\end{align}
For degenerate convex averages, use an Auerbach basis on the complex span and extend it to $\C^n$. Sum \eqref{eq:adapted-random-local-carleson-r36} over $j$ and apply \eqref{eq:adapted-lift-auerbach}. Then \eqref{eq:adapted-lift-random-identity} gives
\begin{align}\label{eq:adapted-lift-local-vector}
\Bigl|\Lambda(\vecf\mathbf1_{P^+},\vecg\mathbf1_P)
-\sum_{R\in\mathcal F_P}\Lambda(\vecf\mathbf1_{R^+},\vecg\mathbf1_R)\Bigr|
\le C_{\gamma,r,s,n}|P|\llangle\vecf\rrangle_{r,P^+}
\circ\llangle\vecg\rrangle_{s,P}.
\end{align}
Iterate \eqref{eq:adapted-lift-local-vector} from a root $P_0$. With $\mathcal G_h$ denoting the stopping boxes at depth $h$, \eqref{eq:adapted-lift-packing} implies
\begin{align*}
\mathcal G_0&=\{P_0\},\qquad
\sum_{Q\in\mathcal G_h}|Q|\le2^{-h}|P_0|,\qquad h\ge0.
\end{align*}
The last remainder is controlled directly by the $L^p\times L^{p'}$ norm $\|\Lambda\|$. No overlap assumption on boxes belonging to distinct parents is used:
\begin{align}\label{eq:adapted-generation-remainder-r33}
\sum_{Q\in\mathcal G_h}
\bigl|\Lambda(\vecf\mathbf1_{Q^+},\vecg\mathbf1_Q)\bigr|
&\le\|\Lambda\|\sum_{Q\in\mathcal G_h}\sum_{j=1}^n
\|f_j\mathbf1_{Q^+}\|_{L^p}\|g_j\mathbf1_Q\|_{L^{p'}}\\
&\le C_{\gamma,n,p}\|\Lambda\|\|\vecf\|_\infty\|\vecg\|_\infty
\sum_{Q\in\mathcal G_h}|Q|\le C_{\gamma,n,p}\|\Lambda\|\|\vecf\|_\infty\|\vecg\|_\infty2^{-h}|P_0|.\notag
\end{align}
Thus the remaining pairings tend to zero without any assertion about overlap between the enlargements of different parents. The stopping boxes over all stages form a $1/2$-sparse dyadic family, with witnesses given by each selected box after removing its selected children. This proves \eqref{eq:adapted-asymmetric-lifting}.

\par\Needspace{3\baselineskip}\smallskip\noindent\textbf{Step 5. Return to genuine adapted boxes.}
For each selected dyadic $Q$ use the genuine enlargement $Q^+$. The map $Q\mapsto Q^+$ is injective because the first-coordinate scale determines the generation and the center determines the box. If $E_Q\subseteq Q$ are the disjoint dyadic sparse witnesses, then
\begin{align}\label{eq:adapted-lift-genuine-witness}
E_Q\subseteq Q^+,\qquad
|E_Q|\ge\tfrac12|Q|\ge c_\gamma|Q^+|.
\end{align}
The convex body averages obey
\begin{align}\label{eq:adapted-lift-genuine-average}
|Q|\llangle\vecf\rrangle_{r,Q^+}
\circ\llangle\vecg\rrangle_{s,Q}
\le C_{\gamma,s}|Q^+|
\llangle\vecf\rrangle_{r,Q^+}
\circ\llangle\vecg\rrangle_{s,Q^+}.
\end{align}
Therefore $\Sc\coloneqq\{Q^+:Q\in\Sc_\star\}$ is a single $\delta_\gamma$-sparse family of genuine adapted boxes, and \eqref{eq:adapted-one-genuine} follows.
\end{proof}

\subsection{Proof of \Propref{prop:endpoint-flat}: the endpoint-contact criterion}\label{subsec:endpoint-flat-proof}
The thin-rectangle test below gives the contact-order threshold. We then check the polynomial and smooth examples.
\begin{proof}[Proof of \Propref{prop:endpoint-flat}]
Write $e\coloneqq\gamma'(1)/|\gamma'(1)|$ and choose the perpendicular unit vector $\nu$ from the statement. For $0<\delta<\delta_0$, let $R_\delta$ be the rectangle centered at the origin with side-lengths $\delta$ in direction $e$ and $\delta^\beta$ in direction $\nu$. Define
\begin{align}\label{eq:endpoint-flat-tests}
f_\delta&\coloneqq|R_\delta|^{-1/r}\mathbf1_{R_\delta},\qquad
 g_\delta\coloneqq|R_\delta|^{-1/s}\mathbf1_{R_\delta+\gamma(1)},\\
|R_\delta|&=\delta^{\beta+1},\qquad
\|f_\delta\|_{L^r}=\|g_\delta\|_{L^s}=1.\notag
\end{align}
The $C^1$ regularity and \eqref{eq:endpoint-contact-condition-r24} yield, for a sufficiently small constant $c_\gamma>0$ and $1-c_\gamma\delta\le t\le1$,
\begin{align}\label{eq:endpoint-polynomial-height}
\bigl|\langle\gamma(t)-\gamma(1),e\rangle\bigr|&\le\delta/2,\\
\bigl|\langle\gamma(t)-\gamma(1),\nu\rangle\bigr|&\le\delta^\beta/4.\notag
\end{align}
The intersection of the two rectangles is therefore controlled in each of their two orthogonal directions:
\begin{align*}
(\delta-\delta/2)(\delta^\beta-\delta^\beta/4)
=\frac38\delta^{\beta+1}=\frac38|R_\delta|.
\end{align*}
Consequently,
\begin{align}\label{eq:endpoint-contact-overlap-r24}
\bigl|(R_\delta+\gamma(t))\cap(R_\delta+\gamma(1))\bigr|
\ge\frac38|R_\delta|.
\end{align}
The terminal contribution is positive, while the negative half of the integral vanishes for sufficiently small $\delta$:
\begin{align*}
\operatorname{dist}\bigl(\gamma(1),\gamma([-1,0])\bigr)>0
&\Longrightarrow
\langle f_\delta(\cdot-\gamma(t)),g_\delta\rangle=0\quad(-1\le t\le0),\\
\langle f_\delta(\cdot-\gamma(t)),g_\delta\rangle&\ge0\quad(0<t\le1).
\end{align*}
The pairing is therefore absolutely convergent near $t=0$. From $t^{-1}\ge1$ on $[1-c_\gamma\delta,1]$ and \eqref{eq:endpoint-contact-overlap-r24} we obtain
\begin{align}\label{eq:endpoint-flat-lower}
\bigl|\langle H_{\gamma,\mathrm{loc}}f_\delta,g_\delta\rangle\bigr|
&\ge \frac{3c_\gamma}{8}\delta|R_\delta|^{-\theta}
=\frac{3c_\gamma}{8}\delta^{1-(\beta+1)\theta}.
\end{align}
The obstruction is visible directly in the exponent:
\begin{align*}
\theta>\frac1{\beta+1}
&\Longrightarrow 1-(\beta+1)\theta<0,\\
\lim_{\delta\downarrow0}|\langle H_{\gamma,\mathrm{loc}}f_\delta,g_\delta\rangle|&=\infty.
\end{align*}
For every $0<a<1$ choose $\delta$ with $1-c_\gamma\delta>a$. The same test gives
\begin{align*}
\|f_\delta\|_{L^r}=\|g_\delta\|_{L^s}=1,\qquad
\langle T_{\gamma,a}f_\delta,g_\delta\rangle
\gtrsim_\gamma\delta^{1-(\beta+1)\theta}\longrightarrow\infty,
\end{align*}
proving \eqref{eq:endpoint-terminal-unbounded-r37}.

The sparse form remains bounded. Since the supports are separated, every contributing adapted box satisfies
\begin{align*}
Q\cap\operatorname{supp}f_\delta\ne\varnothing,\quad
Q\cap\operatorname{supp}g_\delta\ne\varnothing
&\Longrightarrow \ell(Q)\ge c_\gamma'>0,\\
2^j\le\ell(Q)<2^{j+1}
&\Longrightarrow |Q|\eqsim_\gamma2^{3j},\\
\#\{Q\in\Sc:2^j\le\ell(Q)<2^{j+1},\ Q\text{ contributes}\}
&\lesssim_{\gamma,\eta}1.
\end{align*}
The last bound follows because the $\eta$-sparse witnesses are disjoint and lie in an adapted box of measure $O_\gamma(2^{3j})$. Consequently,
\begin{align}\label{eq:endpoint-flat-upper}
|Q|\langle f_\delta\rangle_{r,Q}\langle g_\delta\rangle_{s,Q}
&\le |Q|^{-\theta}\|f_\delta\|_{L^r}\|g_\delta\|_{L^s}
\lesssim_\gamma2^{-3j\theta}.
\end{align}
As $\theta>0$, we conclude that
\begin{align}\label{eq:endpoint-flat-sparse-upper}
\sup_{0<\delta<\delta_0}\ \sup_{\Sc\text{ $\eta$-sparse}}
\Lambda_{\Sc,r,s}(f_\delta,g_\delta)
\le C_{\gamma,r,s,\eta}<\infty.
\end{align}
For any fixed $J\in\mathbb N$ the right side of \eqref{eq:endpoint-no-finite-sparse-r37} is at most $JC_{\gamma,r,s,\eta}$ on this test pair, whereas \eqref{eq:endpoint-flat-lower} diverges. This proves \eqref{eq:endpoint-no-finite-sparse-r37}.

For the polynomial curve in \eqref{eq:endpoint-polynomial-curve}, direct differentiation gives
\begin{align}\label{eq:endpoint-polynomial-curvature}
\gamma_m'(t)&=\left(1,-\frac{(1-t)^{m-1}}{m-1}\right),\qquad
\gamma_m''(t)=(0,(1-t)^{m-2}).
\end{align}
Hence \eqref{eq:endpoint-flat-torsion} holds and $\gamma_m'(1)=(1,0)$. The contact condition is satisfied with $\beta=m$, since
\begin{align}\label{eq:endpoint-polynomial-contact-r24}
\bigl|\langle\gamma_m(1-u)-\gamma_m(1),(0,1)\rangle\bigr|
=\frac{u^m}{m(m-1)}.
\end{align}
The first coordinate also gives $\operatorname{dist}(\gamma_m(1),\gamma_m([-1,0]))\ge1$. Thus the geometric assertion applies. When $m=3$ and $r=s=20/13$, we have $\theta=3/10>1/4$ and
\begin{align}\label{eq:endpoint-interior-barycentric-r24}
\left(\frac{13}{20},\frac7{20}\right)
=\frac1{20}(0,0)+\frac9{10}\left(\frac23,\frac13\right)+\frac1{20}(1,1).
\end{align}
All three coefficients are positive, so the point belongs to $\inte(\Oc_2)$. Every interior point of $\Oc_2$ satisfies $\theta=1/r-1/s'>0$, so for each such point we can choose $m\ge3$ sufficiently large that $(m+1)\theta>1$.

Finally, define
\begin{align}\label{eq:endpoint-flat-curve}
w(t)&\coloneqq\begin{cases}
\exp\bigl(-1/(1-t)^2\bigr),&-1\le t<1,\\
0,&t=1,
\end{cases}\\
\phi(t)&\coloneqq\int_t^1(u-t)w(u)\,du,
\qquad\gamma_\infty(t)\coloneqq(t,\phi(t)-\phi(0)).\notag
\end{align}
Since $\phi''=w$, the torsion is positive for $-1<t<1$ and infinitely flat at $1$. For each $\beta>1$, the transverse displacement satisfies
\begin{align}\label{eq:endpoint-flat-height}
0\le\phi(1-u)&\le\frac{u^2}{2}\exp(-1/u^2)
\lesssim_\beta u^\beta,\qquad 0<u<1.
\end{align}
The other hypotheses follow from the first coordinate and $\phi''(0)>0$. For any $\theta>0$, choose $\beta>\max\{1,\theta^{-1}-1\}$ and apply the geometric assertion. This completes the proof.
\end{proof}

\subsection{Proof of \Propref{prop:B}: the adapted matrix characteristic}\label{subsec:metric-geometry}
To prove \Propref{prop:B}, we verify the doubling metric, full ball-basis characteristic and grouped dyadic geometry required by \cite[Theorem~1.3]{KakaroumpasNguyenVardakis2026}.
\begin{proof}[Proof of \Propref{prop:B}]
\noindent\textit{The adapted metric.} We first verify the metric geometry required by \Extref{KakaroumpasNguyenVardakis2026}{Theorem~1.3} \cite{KakaroumpasNguyenVardakis2026}. In fixed adapted coordinates the degrees are
\begin{align*}
(\alpha_1,\ldots,\alpha_d)=
\begin{cases}
\text{the monomial powers},&\text{for }H_\gamma,\\
(1,2,\ldots,d),&\text{for }H_{\gamma,\mathrm{loc}}.
\end{cases}
\end{align*}
The coordinate change has a fixed Jacobian and hence preserves all normalized averages up to an immaterial constant. Define
\begin{align}\label{eq:gamma-quasinorm}
\rho_\gamma(x-y)=\max_{1\le j\le d}
|x_j-y_j|^{1/\alpha_j},\qquad
B_\gamma(x,t)=\{y:\rho_\gamma(x-y)<t\}.
\end{align}
Set $\kappa\coloneqq \min\{1,\alpha_1,\ldots,\alpha_d\}$ and define
\begin{align*}
d_\gamma(x,y)\coloneqq \rho_\gamma(x-y)^\kappa
=\max_{1\le j\le d}|x_j-y_j|^{\kappa/\alpha_j}.
\end{align*}
Since $0<\kappa/\alpha_j\le1$, the coordinate snowflake metrics give
\begin{align*}
B_{d_\gamma}(x,t^\kappa)=B_\gamma(x,t),\qquad
\operatorname{side}_j(B_\gamma(x,t))=2t^{\alpha_j}.
\end{align*}
Their measures obey
\begin{align}\label{eq:gamma-doubling}
|B_\gamma(x,t)|=2^d t^{\alpha_1+\cdots+\alpha_d},\qquad
|B_\gamma(x,2t)|=2^{\alpha_1+\cdots+\alpha_d}
|B_\gamma(x,t)|.
\end{align}
Writing $B_{d_\gamma}(x,R)$ for a ball of radius $R$ in the metric $d_\gamma$, we have $B_{d_\gamma}(x,R)=B_\gamma(x,R^{1/\kappa})$ and hence
\begin{align}\label{eq:metric-doubling-r13}
|B_{d_\gamma}(x,2R)|=2^{(\alpha_1+\cdots+\alpha_d)/\kappa}|B_{d_\gamma}(x,R)|.
\end{align}
Thus $(\R^d,d_\gamma,dx)$ is a doubling metric measure space with the doubling constant expressed in its actual metric radius. The translated
boxes in $\Qc_\gamma$ have this ball geometry up to a fixed
change of radius and center. More precisely, if $Q$ has side-lengths
$\ell^{\alpha_j}$ and center $c_Q$, choose a fixed $c_0\in(0,1)$
so small that $2c_0^{\alpha_j}\le1$ for every $j$. Then
\begin{align}\label{eq:ball-sandwich-r4}
        B_\gamma(c_Q,c_0\ell)\subseteq Q
        \subseteq B_\gamma(c_Q,\ell),
        \qquad
        \frac{|B_\gamma(c_Q,\ell)|}{|Q|}=2^d.
\end{align}
\par\Needspace{3\baselineskip}\smallskip\noindent\textit{Complete ball-basis comparison.} Let $Q\subseteq R$ and suppose $|R|\le C|Q|$. For $x\in Q$,
\begin{align*}
\frac1{|Q|}\int_Q\|U(x)^{1/\beta}U(y)^{-1/\beta}\|^{\alpha'}\,dy
\le \frac{|R|}{|Q|}\frac1{|R|}\int_R\|U(x)^{1/\beta}U(y)^{-1/\beta}\|^{\alpha'}\,dy.
\end{align*}
Raising both sides to $\beta/\alpha'$, integrating over $Q$, and extending the outer integral to $R$ gives
\begin{align}\label{eq:matrix-characteristic-inclusion-r4}
    [U]_{A_{\alpha,\beta}}(Q)
    \le \left(\frac{|R|}{|Q|}\right)^{1+\beta/\alpha'}
    [U]_{A_{\alpha,\beta}}(R),
\end{align}
where the notation on the left means the individual expression in
\eqref{eq:matrix-Aab} before taking the supremum. We make both directions of the comparison explicit. By \eqref{eq:ball-sandwich-r4}, every genuine adapted box $Q$ lies inside $B_\gamma(c_Q,\ell(Q))$ and the ratio of their measures is $2^d$. Conversely, for every $B_\gamma(x,t)$ the genuine adapted box centered at $x$ with scale $2^{1/\alpha_1}$ times $t$ contains the ball and has measure bounded by a fixed multiple of its measure. Applying \eqref{eq:matrix-characteristic-inclusion-r4} in these two directions yields
\begin{align}\label{eq:matrix-ball-box-equivalence-r31}
\sup_{B\text{ adapted ball}}[U]_{A_{\alpha,\beta}}(B)
\eqsim_{\gamma,\alpha,\beta}
\sup_{Q\in\Qc_\gamma}[U]_{A_{\alpha,\beta}}(Q).
\end{align}
Since $B_{d_\gamma}(x,R)=B_\gamma(x,R^{1/\kappa})$, the suprema in \eqref{eq:matrix-ball-box-equivalence-r31} range over the complete ball basis required in \cite[Theorem~1.3]{KakaroumpasNguyenVardakis2026}.
For a dyadic box $P\in\mathscr D_\star(k)$, \Lemref{lem:corrected-grid} gives
\begin{align*}
  2^{-1}2^{k\alpha_j}<\operatorname{side}_j(P)\le 2^{k\alpha_j},
  \qquad 1\le j\le d.
\end{align*}
Thus there are fixed $c_1,c_2>0$, depending only on the anisotropic degrees, such that
\begin{align*}
 B_\gamma(c_P,c_1 2^k)\subseteq P\subseteq B_\gamma(c_P,c_2 2^k).
\end{align*}
For the local torsion curve this calculation takes place in its fixed origin Taylor frame with degrees $1,\ldots,d$. The temporary frames used to establish the scalar single-scale bound in \cite[Lemma~3.2]{CladekOu2018} do not change the stopping rectangles. In the metric $d_\gamma$ the generation-$k$ boxes have diameter at most $C_\gamma 2^{k\kappa}$ and contain balls of radius $c_\gamma 2^{k\kappa}$. They are nested by \eqref{eq:alternating-nesting-proof}, and \eqref{eq:adapted-parent-ratio} controls the parent-child measure ratio.

\par\Needspace{3\baselineskip}\smallskip\noindent\textit{Grouped dyadic geometry.} We verify the dyadic-system hypotheses in \cite[Subsection~2.2]{KakaroumpasNguyenVardakis2026}. Set $R_k\coloneqq2^{\kappa k}$ and define the geometric constants
\begin{align}\label{eq:matrix-grid-constants-r31}
c_*\coloneqq\min_{1\le i\le d}4^{-\kappa/\alpha_i},
\qquad D_*\coloneqq\max_{1\le i\le d}2^{-\kappa/\alpha_i}<1.
\end{align}
For every $Q\in\mathscr D_\star(k)$, the bounds $2^{-1}2^{k\alpha_i}<\operatorname{side}_i(Q)\le2^{k\alpha_i}$ imply
\begin{align}\label{eq:matrix-grid-ball-bounds-r31}
B_{d_\gamma}(c_Q,c_*R_k)\subseteq Q
\subseteq B_{d_\gamma}(c_Q,R_k).
\end{align}
The same coordinate calculation gives $d_\gamma(c_Q,c_P)\le D_*R_{k(P)}$ whenever $Q\subseteq P$. Choose $m_0\in\mathbb N$ so large that $\delta\coloneqq2^{-m_0\kappa}\le c_*/24$, and for $b\in\{0,\ldots,m_0-1\}$ set
\begin{align}\label{eq:matrix-grouping-r31}
\mathscr D_b^j\coloneqq\mathscr D_\star(b-m_0j),\qquad
c_{0,b}\coloneqq c_*2^{\kappa b},\qquad
C_{0,b}\coloneqq2\,2^{\kappa b}.
\end{align}
A box $Q\in\mathscr D_b^j$ has $R_{k(Q)}=2^{\kappa b}\delta^j$, and the estimates above yield
\begin{align}\label{eq:matrix-christ-ball-hypotheses-r31}
B_{d_\gamma}(c_Q,\tfrac{c_{0,b}}3\delta^j)
&\subseteq Q\subseteq
B_{d_\gamma}(c_Q,2C_{0,b}\delta^j),\\
12C_{0,b}\delta&=24\,2^{\kappa b}\delta
\le c_{0,b}.\notag
\end{align}
The balls in \eqref{eq:matrix-christ-ball-hypotheses-r31} satisfy the required nesting. Indeed, if $Q\in\mathscr D_b^{j+1}$ and $P\in\mathscr D_b^j$ with $Q\subseteq P$, then
\begin{align}\label{eq:grouped-outer-ball-inclusion-r17}
 d_\gamma(z,c_P)&\le d_\gamma(z,c_Q)+d_\gamma(c_Q,c_P)\notag\\
 &<4\delta R_{k(P)}+D_*R_{k(P)}
 <4R_{k(P)}
\end{align}
for $z\in B_{d_\gamma}(c_Q,4R_{k(Q)})$, since $\delta\le c_*/24<1/2$. For more distant descendants, iterate this containment. The centers of distinct generation-$k$ cells are separated by at least $\min_i2^{-\kappa/\alpha_i}R_k\ge c_*R_k$, and their cells cover the entire space up to null boundaries. These facts also give the center separation and covering properties \cite[(2.2)--(2.3)]{KakaroumpasNguyenVardakis2026}. Thus each $\mathscr D_b$ satisfies the nestedness, covering, inner-ball, outer-ball and outer-ball nesting requirements of \cite[Theorem~2.1]{KakaroumpasNguyenVardakis2026}. The interiors may be used for the open dyadic sets, since their boundaries have Lebesgue measure zero. The constants in \eqref{eq:matrix-grouping-r31} vary with $b$, but the bounds and the number $m_0$ depend only on $\gamma$. Every $\Sc\subseteq\mathscr D_\star$ decomposes into $m_0$ sparse subfamilies $\Sc\cap\mathscr D_b$, and the original sparse witnesses remain valid.

By \eqref{eq:matrix-ball-box-equivalence-r31}, the complete ball-basis matrix characteristic required by the cited theorem is bounded by $C_{\gamma,p,r,s}[W]_{p,r,s}$.

\par\Needspace{3\baselineskip}\smallskip\noindent\textit{The matrix embedding.} With the doubling and dyadic properties verified, apply \Extref{KakaroumpasNguyenVardakis2026}{Theorem~1.3} \cite{KakaroumpasNguyenVardakis2026} to the doubling adapted space. In that theorem choose $p_0=r$, $q_0=s'$, $t=p/r$ and $\sigma=s'/p$. Its matrix condition agrees with \eqref{eq:matrix-characteristic}, since
\begin{align*}
 a=\frac{pr}{p-r},\qquad b=\frac{ps'}{s'-p},\qquad
 \sigma'=\frac{s'}{s'-p},\qquad
 [W^{\sigma'}]_{A_{a',b}}=[W]_{p,r,s}.
\end{align*}
The theorem provides the power
\begin{align}\label{eq:matrix-exponents-r31}
\frac1{\sigma'(p-r)}+\frac1{p'}
=\frac{s'-p}{s'(p-r)}+\frac1{p'}.
\end{align}
The embedding requires $1<a'\le b<\infty$, which follows from $1\le r<p<s'$:
\begin{align}\label{eq:matrix-exponent-order-r38}
a=\frac{pr}{p-r}\ge p',\qquad
1<a'\le p<b=\frac{ps'}{s'-p}<\infty.
\end{align}
Its joint integrability criterion \cite[Lemma~5.2]{KakaroumpasNguyenVardakis2026}, applied with $V=W^{\sigma'}$, gives
\begin{align*}
V^{1/b}=W^{1/p},\qquad V^{-1/b}=W^{-1/p},\qquad
W^{-1/p}\in L^a_{\mathrm{loc}},\qquad W^{1/p}\in L^b_{\mathrm{loc}}.
\end{align*}
The exponents satisfy
\begin{align}\label{eq:matrix-local-exponents-r31}
a=\frac{pr}{p-r}>r,
\qquad b=\frac{ps'}{s'-p}>s,
\end{align}
hence
\begin{align*}
W^{-1/p}\vecF&\in L^r_{\mathrm{loc}},\qquad
W^{1/p}\vecG\in L^s_{\mathrm{loc}},\\
a\ge p'&\Longrightarrow W^{-p'/p}\in L^1_{\mathrm{loc}}.
\end{align*}
The averages in \eqref{eq:matrix-embedding} and duality in \eqref{eq:duality} are well defined. The dilation in \cite[(2.4)]{KakaroumpasNguyenVardakis2026} is the metric neighborhood
\begin{align}\label{eq:matrix-source-neighborhood-r31}
\lambda Q\coloneqq\{z:d_\gamma(z,Q)\le(\lambda-1)\ell_b(Q)\},
\qquad\ell_b(Q)=2C_{0,b}\delta^j=4R_{k(Q)}
\end{align}
for $Q\in\mathscr D_b^j$. If $z\in Q^+$, then $c_Q\in Q$ and the explicit side-lengths of $Q^+$ imply
\begin{align}\label{eq:matrix-plus-source-containment-r31}
d_\gamma(z,Q)\le d_\gamma(z,c_Q)
&\le D_* A_\gamma^\kappa R_{k(Q)}
 < A_\gamma^\kappa R_{k(Q)}.
\end{align}
Choose once and for all $\lambda\ge\max\{2,1+2/\delta,1+A_\gamma^\kappa\}$. Then $Q^+\subseteq\lambda Q$ for every $Q$ in every grouped system. Moreover, the triangle inequality and \eqref{eq:matrix-grid-ball-bounds-r31} give
\begin{align}\label{eq:matrix-dilation-volume-r31}
Q\subseteq\lambda Q\subseteq
B_{d_\gamma}(c_Q,(4\lambda-3)R_{k(Q)}),
\qquad |\lambda Q|\eqsim_{\gamma,\lambda}|Q|.
\end{align}
For any measurable sets $E\subseteq D$ of positive finite measure and every $1\le a<\infty$, extension of scalar test functions from $E$ to $D$ gives
\begin{align}\label{eq:dilated-average-inclusion}
\llangle\vech\rrangle_{a,E}\subseteq
\left(\frac{|D|}{|E|}\right)^{1/a}\llangle\vech\rrangle_{a,D}.
\end{align}
We apply this inclusion with $(E,D)=(Q^+,\lambda Q)$ and $(Q,\lambda Q)$. By \eqref{eq:matrix-dilation-volume-r31}, their two scalar factors are uniformly bounded. The published matrix theorem controls the convex body form with \emph{both} averages on $\lambda Q$, so it dominates our asymmetric form. Precisely, \Extref{KakaroumpasNguyenVardakis2026}{Theorem~1.3} \cite{KakaroumpasNguyenVardakis2026}, \eqref{eq:matrix-ball-box-equivalence-r31} and the decomposition into the $m_0$ grouped sparse families yield
\begin{align}\label{eq:matrix-embedding-applied-r31}
\sum_{Q\in\Sc}|Q|\llangle W^{-1/p}\vecF\rrangle_{r,Q^+}\circ\llangle W^{1/p}\vecG\rrangle_{s,Q}
\lesssim_{\gamma,n,p,r,s,\eta}
[W]_{p,r,s}^{\frac{s'-p}{s'(p-r)}+\frac1{p'}}
\|\vecF\|_{L^p}\|\vecG\|_{L^{p'}}.
\end{align}
This proves \eqref{eq:matrix-embedding}.
\end{proof}

\medskip
\noindent{\bf Funding }
Not applicable.
\medskip

\noindent{\bf Data Availability} This manuscript has no associated data.

\medskip
\noindent{\bf\Large Declarations}
\medskip

\noindent{\bf Conflicts of Interest} The author states that there are no conflicts of interest.

\medskip
\noindent{\bf AI disclosure.}
During the preparation of this manuscript, the author used ChatGPT to discuss possible proof strategies, assist in drafting and checking parts of the mathematical arguments, organize references, and improve the clarity and readability of the exposition. All mathematical statements and proofs in the final manuscript were written, reviewed, and verified by the author, who takes full responsibility for the content of this manuscript.

\end{document}